\newtheorem{theorem}{Theorem}[section]
\newtheorem{lemma}{Lemma}[section]
\newdefinition{example}{Example}[section]
\newdefinition{remark}{Remark}[section]
\newdefinition{definition}{Definition}[section]
\def\cM{{\mathcal M}}
\def\R{\mathbb{R}}
\def\N{\mathbb{N}}
\def\va{\varepsilon}
\numberwithin{equation}{section} \journal{}
\begin{document}
\begin{frontmatter}

\title{\textbf{Normalized solutions for some
quasilinear elliptic equation
with critical Sobolev exponent}\tnoteref{label1}} \tnotetext[label1]{Projects supported by National Natural Science Foundation of China (Grant Nos. 12271313, 12071266, 12026217, 12026218, 12101376), Fundamental Research Program of Shanxi Province (202203021211300, 202203021211309, 20210302124528) and Shanxi Scholarship Council of China (Grant No. 2020-005).}
\author{Xiaojing Feng} \ead{fengxj@sxu.edu.cn}
\author{Yuhua Li\corref{cor1}} \ead{yhli@sxu.edu.cn}
\cortext[cor1]{Corresponding author.}
\address{School of Mathematical Sciences, Shanxi University, Taiyuan 030006, Shanxi, P.R. China}
\begin{abstract}
Consider the equation
\begin{equation*}
-\Delta_p u
=\lambda |u|^{p-2}u+\mu|u|^{q-2}u+|u|^{p^\ast-2}u\ \ {\rm in}\ \R^N
\end{equation*}
under the normalized constraint
$$\int_{ \R^N}|u|^p=c^p,$$
where $-\Delta_pu={\rm div} (|\nabla u|^{p-2}\nabla u)$, $1<p<N$, $p<q<p^\ast=\frac{Np}{N-p}$, $c,\mu>0$ and $\lambda\in\R$.
In the purely $L^p$-subcritical case,
we obtain the existence of ground state solution by virtue of truncation technique,
and obtain multiplicity of normalized
solutions. In the purely $L^p$-critical and supercritical
case, we drive the existence of positive ground state solution, respectively.
 Finally, we investigate the asymptotic
behavior of ground state solutions obtained above as $\mu\to0^+$.\\[1mm]
{\bf Keywords:} Normalized solutions; $p$-Laplace equation;
Sobolev critical exponent; Poho\v{z}aev manifold.\\[1mm]
{\bf Mathematics Subject Classification (2020):} 35J92; 35B33; 35B06
\end{abstract}
\end{frontmatter}

\section{Introduction and main results}

The aim of this paper is to study the normalized solutions of some $p$-Laplace equation with combined power nonlinearities
\begin{equation}\label{eq1.1}
-\Delta_p u
=\lambda |u|^{p-2}u+\mu|u|^{q-2}u+|u|^{p^\ast-2}u\ \ {\rm in}\ \R^N,
\end{equation}
where $-\Delta_pu={\rm div} (|\nabla u|^{p-2}\nabla u)$, $1<p<N$, $p<q<p^\ast=\frac{Np}{N-p}$, $\lambda,\,\mu\in\R$.
Before
we make precise statements, let us comment on some works which motivated this one.
It is well known \eqref{eq1.1} is a special form of the equation
\begin{equation}\label{eq1.2}
-\Delta_p u
=\lambda |u|^{p-2}u+f(u)\ \ {\rm in}\ \R^N,
\end{equation}
where $f(u)$  is a general nonlinearity.  In general,
problem \eqref{eq1.2} can be seen as the stationary counterpart of evolution equations with nonlinear
diffusion. The $p$-Laplace equation arises in a variety of physical phenomena, for instance,  in the study of no-Newtonian fluids, and in the study of nonlinear elasticity problems, please see \cite{zgw,pl11}
for more details of physical background.
When looking for solutions to \eqref{eq1.2}, a possible choice is to fix $\lambda\in \R$ and to search for solutions to \eqref{eq1.2} as critical points of the
corresponding energy functional by using variational method,
see for example \cite{dpr,shz,ll11, ly1,coa} in unbounded domains and \cite{gapa1, blw, tbl,jzh}
in bounded domains.

Alternatively, from a physical point of view, it is interesting to find solutions to \eqref{eq1.2} having prescribed mass
\begin{equation}\label{eq1.3}
\int_{ \R^N}|u|^p=c^p\ \text{with}\ c>0.
\end{equation}
In this direction, the parameter $\lambda\in \R$ arises as a Lagrange multiplier, which
depends on the solution and is not a priori given.
The aim of this paper is to establish the existence asymptotic properties of weak solutions of
\eqref{eq1.1} and \eqref{eq1.3}. Here and after, by a solution
we always mean a couple $(u, \lambda)$ which satisfies
\eqref{eq1.1} and \eqref{eq1.3}. One refers to this type of solutions
as to {\it normalized solutions}, since \eqref{eq1.3} imposes
a normalization on the $L^p$-norm of $u$.

In the case of $p=2$, \eqref{eq1.2} can be rewritten as the
semilinear elliptic equation
\begin{equation}\label{eq1.4}
-\Delta u=\lambda u+f(u)\ \ {\rm in}\ \R^N.
\end{equation}
Normalized solutions of \eqref{eq1.4} have achieved considerable attention(see, for examples,
\cite{lj,bs3,bdv,cas1,cas2,lz-12-24,cl1,ms1,ht1}).
Recently, in \cite{ns2} Soave studied \eqref{eq1.4} with
combined nonlinearities, i.e.,
$$f(u)=\mu|u|^{q-2}u+|u|^{p-2}u,$$
where $q<p$ satisfies $2<q\leq\bar{q}=2+4/N\leq p<2^\ast=\frac{2N}{N-2}$. Existence
and asymptotic properties of normalized ground state solutions,
as well as stability/instability results, were established.
Later, in \cite{lz2} Luo and Zhang obtained existence
and nonexistence results of normalized solutions for fractional
Schr\"{o}dinger equations with combined nonlinearities. In particular, Soave
\cite{ns1} investigated the Brezis-Nirenberg problem
\begin{equation}\label{eq1.5}
-\Delta u=\lambda u+\mu|u|^{q-2}u+|u|^{2^\ast-2}u\ \ {\rm in}\ \R^N,
\end{equation}
where $2<q<2^\ast$. The cases of $L^2$-subcritical, $L^2$-critical and $L^2$-supercritical
perturbation were considered respectively. The author proved the
existence and asymptotic properties of normalized ground state
solutions for \eqref{eq1.5}. With the help of positive lower order perturbation
term $\mu|u|^{q-2}u$, the associated energy level can be pulled
down below the noncompactness level and henceforth the existence
result is obtained. Compared with the case of  $p =2$,  there are few papers considered the normalized
solutions of \eqref{eq1.2} with $p\neq 2$ except \cite{gzz, wlzl}.

Motivated by the results mentioned above, we shall establish the
existence and asymptotic properties of normalized
solutions of \eqref{eq1.1} in the current
paper. Let $E=W_r^{1,p}(\R^N)$ and denote by $|\cdot|_s$ the usual
norm of $L^s(\R^N)$ with $s\in[1,\infty)$.
Moreover, we denote by $J_{\mu}: E\to\R$ the energy functional related to \eqref{eq1.1}, given by
$$J_{\mu}(u)=\frac{1}{p}\int_{\R^N}|\nabla u|^p
-\frac{1}{p^\ast}\int_{\R^N}|u|^{p^\ast}-\frac{\mu}{q}\int_{\R^N}|u|^q.$$
It is standard that $J_{\mu}$ is of class $C^1$ in $E$, and any critical points of $J_{\mu}$ constrained to
$$S_c=\{u\in E: |u|_p=c\}$$
give rise to normalized solutions of \eqref{eq1.1} with $\lambda$ as
a Lagrange multiplier. We will focus on the
existence and asymptotic properties of ground state solutions,
the definition of which is as follows.

\begin{definition}
$u\in S_c$ is said to be a ground state solution of
\eqref{eq1.1} if
$$(J_{\mu}|_{S_c})'(u)=0\ \ {\rm and}\ \
J_{\mu}(u)=\inf\left\{J_{\mu}(v):
v\in S_c\ \text{with}\ (J_{\mu}|_{S_c})'(v)=0\right\}.$$
\end{definition}
Next, we recall the Gagliardo-Nirenberg inequality \cite{cmo,mag} as follows.
\begin{lemma}\label{lem1.1}
Let $p<q<p^\ast$, then the following inequality holds
$$\int_{\R^N}|u|^q\leq
C(q)|u|_p^{(1-\gamma_q)q}|\nabla u|_p^{\gamma_q q},
\ \ \text{for all}\ u\in W^{1,p}(\R^N),$$
where $\gamma_q=\frac{N(q-p)}{pq}$ and $C(q)>0$ is the best possible constant in this inequality.
\end{lemma}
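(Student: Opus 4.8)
The plan is to obtain the inequality by interpolating the $L^q$-norm between $L^p$ and the critical Lebesgue space $L^{p^\ast}$, and then controlling the critical norm by the gradient through the Sobolev embedding. First I would fix the interpolation exponent. Since $p<q<p^\ast$, there is a unique $\theta\in(0,1)$ determined by
$$\frac{1}{q}=\frac{1-\theta}{p}+\frac{\theta}{p^\ast}.$$
Applying Hölder's inequality with the conjugate exponents $\frac{p}{(1-\theta)q}$ and $\frac{p^\ast}{\theta q}$ (whose reciprocals sum to $1$ precisely because of the defining relation) then yields
$$|u|_q\leq |u|_p^{1-\theta}\,|u|_{p^\ast}^{\theta},\qquad u\in W^{1,p}(\R^N).$$

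Next I would invoke the Sobolev inequality $|u|_{p^\ast}\leq S|\nabla u|_p$, valid on $W^{1,p}(\R^N)$ for $1<p<N$, to replace the critical norm, which gives $|u|_q\leq S^{\theta}\,|u|_p^{1-\theta}\,|\nabla u|_p^{\theta}$. Raising both sides to the power $q$ produces exactly the asserted form, with constant $S^{\theta q}$. It then remains to check that $\theta$ coincides with $\gamma_q$. Using $\frac{1}{p}-\frac{1}{p^\ast}=\frac{1}{N}$ and $\frac{1}{p}-\frac{1}{q}=\frac{q-p}{pq}$, one solves the defining relation to get $\theta=\frac{1/p-1/q}{1/p-1/p^\ast}=\frac{N(q-p)}{pq}=\gamma_q$, as required. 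A useful consistency check is the dilation $u_t(x)=u(tx)$: since $|\nabla u_t|_p=t^{1-N/p}|\nabla u|_p$ and $|u_t|_s=t^{-N/s}|u|_s$, matching the powers of $t$ on the two sides forces the exponent to equal $\gamma_q$, which independently confirms the computation.

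Finally, for the sharpness claim I would define
$$C(q)=\sup_{u\in W^{1,p}(\R^N)\setminus\{0\}}\frac{\int_{\R^N}|u|^q}{|u|_p^{(1-\gamma_q)q}\,|\nabla u|_p^{\gamma_q q}}.$$
The inequality just established shows $C(q)\leq S^{\gamma_q q}<\infty$, and testing with any fixed nonzero function shows $C(q)>0$; the Rayleigh quotient is invariant both under scalar multiplication and under the dilation above, so $C(q)$ is a well-defined finite positive constant and the inequality holds with it by construction. I expect the only genuinely hard part to arise if one further requires that this optimal constant be \emph{attained}: the obstacle is then compactness of a maximizing sequence, which I would address by normalizing $|u|_p=|\nabla u|_p=1$ via the two scalings, passing to radial decreasing rearrangements, and exploiting the compact embedding $W_r^{1,p}(\R^N)\hookrightarrow L^q(\R^N)$ for $p<q<p^\ast$ (or, on the full space, a concentration--compactness argument) to produce an extremizer. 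For the statement as written, however, only finiteness and positivity of $C(q)$ are needed, and these follow directly from the interpolation argument above.
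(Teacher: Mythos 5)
Your argument is correct. Note that the paper does not prove Lemma \ref{lem1.1} at all: it simply recalls the Gagliardo--Nirenberg inequality from the cited references \cite{cmo,mag}. Your interpolation-plus-Sobolev derivation is the standard self-contained proof: the H\"older interpolation $|u|_q\leq|u|_p^{1-\theta}|u|_{p^\ast}^{\theta}$ with $\frac1q=\frac{1-\theta}{p}+\frac{\theta}{p^\ast}$, the identification $\theta=\gamma_q$, and the Sobolev control of $|u|_{p^\ast}$ by $|\nabla u|_p$ are all carried out correctly, and the scaling check confirming the exponent is a nice touch. Your treatment of the ``best possible constant'' is also the right one for the statement as written: defining $C(q)$ as the supremum of the Rayleigh quotient, which your inequality shows is finite and which is trivially positive, is exactly what the lemma asserts; attainment of the supremum is a separate (and harder) question that the lemma does not require, and you correctly identify it as such. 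The only caveat worth flagging is notational: the paper normalizes $S=\inf|\nabla u|_p^p/|u|_{p^\ast}^p$, so its Sobolev inequality reads $|u|_{p^\ast}\leq S^{-1/p}|\nabla u|_p$ and your crude bound on the optimal constant would become $C(q)\leq S^{-\gamma_q q/p}$ in the paper's convention; this does not affect the validity of anything you wrote.
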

In order to obtain the ground state solution, as in \cite{ns1}, we introduce the Poho\v zaev manifold
$$\cM_{c,\mu}=\{u\in S_c:\ P_{\mu}(u)=0\},$$
where
$$P_{\mu}(u)=\int_{\R^N}|\nabla u|^p
-\int_{\R^N}|u|^{p^\ast}-\mu \gamma_q \int_{\R^N}|u|^q.$$
It is clear that $J_{\mu}$ is bounded from below on $\cM_{c,\mu}$
and
$$m(c,\mu)=\inf_{u\in\mathcal {M}_{c,\mu}}J_{\mu}(u)$$
is well defined provided that $\cM_{c,\mu}\neq\emptyset$.
Using Poho\v zaev identity (see \cite{ll11}), the critical points of the functional
$J_{\mu}|_{S_c}$ lie in $\cM_{c,\mu}$.
Hence if $\inf_{\mathcal {M}_{c,\mu}}J_{\mu}$ is achieved by the normalized solution for \eqref{eq1.1}, then it is the a ground state solution of \eqref{eq1.1} on $S(c)$.
In what follows, we will state our main results, for this purpose we will give some notations. Set
$$C'=\left(\frac{p^\ast(p-\gamma_q q)S^{\frac{p^\ast}{p}}}
{p(p^\ast-\gamma_q q)}\right)^{\frac{p-\gamma_q q}{p^\ast-p}}\frac{(p^\ast-p)q}{C(q)p(p^\ast-\gamma_q q)},$$
where $S$ is the optimal constant of the Sobolev embedding
$D^{1,p}(\R^N)\hookrightarrow L^{p^\ast}(\R^N)$. Define
$$\alpha (q)=\left\{\begin{array}{ll}
C', &\text{if}\ p<q<p+p^2/N,\\
\frac{q}{pC({q})},&\text{if}\ q= p+p^2/N,\\
+\infty,&\text{if}\ p+p^2/N< q<p^\ast.
\end{array}\right.$$

\begin{theorem}\label{the1.1}
Let $\mu,\,c>0$ be such that
\begin{equation}\label{eq1.6}
\mu c^{(1-\gamma_q)q}<\alpha(q).
\end{equation}
Assume that $p<q<p+p^2/N$. We obtain \\
$(i)$ \eqref{eq1.1} has a positive
ground state solution $u\in S_c$ with $J_{\mu}(u)<0$ and $u$
is an interior local minimizer of $J_{\mu}$ on the set
$A_{R_0}(c)=\{u\in S_c:|\nabla u|_2<R_0\}$ for a suitable $R_0$.\\
$(ii)$  \eqref{eq1.1} possesses an unbounded sequence of solutions $(u_k,\lambda_k)$
with $|u_k|_p=c$, $\lambda_k<0$ and $J(u_k)\to0^-$ as $k\to\infty$.\\
$(iii)$  if we make additional assumptions $N>2$ and $N^{2/3}<p<3$, then \eqref{eq1.1} also has a positive
normalized solution of mountain pass type.

\end{theorem}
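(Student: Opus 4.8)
The plan is to read the geometry of $J_\mu$ on $S_c$ off the mass-preserving dilation $u\mapsto u_t$, $u_t(x)=t^{N/p}u(tx)$, for which $|u_t|_p=|u|_p$ and
$$
J_\mu(u_t)=\frac{t^p}{p}|\nabla u|_p^p-\frac{t^{p^\ast}}{p^\ast}|u|_{p^\ast}^{p^\ast}-\frac{\mu t^{\gamma_q q}}{q}|u|_q^q .
$$
Since $p<q<p+p^2/N$ forces $\gamma_q q<p<p^\ast$, the smallest exponent is that of the negative $L^q$-term, so $J_\mu(u_t)\to0^-$ as $t\to0^+$ and $J_\mu(u_t)\to-\infty$ as $t\to\infty$, with a positive barrier in between. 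Quantitatively, combining $|u|_{p^\ast}^{p^\ast}\le S^{-p^\ast/p}|\nabla u|_p^{p^\ast}$ with Lemma \ref{lem1.1} gives, for $u\in S_c$ and $\rho=|\nabla u|_p$,
$$
J_\mu(u)\ge \frac1p\rho^p-\frac1{p^\ast}S^{-p^\ast/p}\rho^{p^\ast}-\frac{\mu}{q}C(q)c^{(1-\gamma_q)q}\rho^{\gamma_q q}=:f(\rho),
$$
and the constant $C'$ in $\alpha(q)$ is calibrated exactly so that, under \eqref{eq1.6}, the positive hump of $f$ survives: there are $R_0<R_1$ with $f<0$ on $(0,R_0)$, $f\ge0$ on $[R_0,R_1]$, and $f<0$ beyond $R_1$. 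This furnishes the radii used below.

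For $(i)$ I would minimize $J_\mu$ directly on $A_{R_0}(c)=\{u\in S_c:|\nabla u|_p<R_0\}$. The estimate above shows $J_\mu$ is bounded below there, and a test function $u_t$ with $t$ small gives $m(c,\mu):=\inf_{A_{R_0}(c)}J_\mu<0$. Take a minimizing sequence; by symmetrization it may be assumed radial and nonnegative, and it is bounded in $E$, so $u_n\rightharpoonup u$ with $u_n\to u$ in $L^q$ by the compact embedding $W_r^{1,p}(\R^N)\hookrightarrow L^q(\R^N)$, $p<q<p^\ast$. If $u=0$ then $|u_n|_q\to0$ and $J_\mu(u_n)\ge f(\rho_n)\ge0$, contradicting $m(c,\mu)<0$; hence $u\ne0$. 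A Brezis--Lieb decomposition $u_n=u+w_n$ shows any residual $w_n$ contributes nonnegative gradient and critical energy, while vanishing and point concentration (each costing at least $\tfrac1N S^{N/p}>0$) are ruled out by the negative level; thus $u_n\to u$ strongly, $|u|_p=c$, and $m(c,\mu)$ is attained, necessarily in the interior since $J_\mu\ge f\ge0>m(c,\mu)$ on $\partial A_{R_0}(c)$. The Lagrange multiplier rule makes $u$ a solution of \eqref{eq1.1}; replacing $u$ by $|u|$ and applying the strong maximum principle for $-\Delta_p$ gives $u>0$.

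For $(ii)$ I would use that $J_\mu$ is even and $S_c$ symmetric. To secure a global Palais--Smale condition I first replace $J_\mu$ by a truncated functional $\widetilde J_\mu$ that coincides with $J_\mu$ on $A_{R_0}(c)$ and is modified for $\rho\ge R_0$ so as to stay bounded below; the truncation is inactive at negative levels. With the Krasnoselskii genus $\gamma$, set $c_k=\inf_{\gamma(A)\ge k}\sup_{u\in A}\widetilde J_\mu(u)$ over symmetric compact $A\subset S_c$. Scaling (via $u\mapsto u_t$, small $t$) a sphere in a $k$-dimensional subspace of $E$ shows $c_k<0$ for every $k$, while $c_k\uparrow0^-$. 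A symmetric deformation argument makes each $c_k$ a critical value, yielding distinct normalized solutions $(u_k,\lambda_k)$ with $J(u_k)=c_k\to0^-$; being at negative levels they sit where the truncation is inactive and hence solve \eqref{eq1.1}. Finally, combining the Nehari relation $|\nabla u_k|_p^p=\lambda_k c^p+\mu|u_k|_q^q+|u_k|_{p^\ast}^{p^\ast}$ with $P_\mu(u_k)=0$ gives $\lambda_k c^p=-\mu(1-\gamma_q)|u_k|_q^q<0$.

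For $(iii)$ I would set up a constrained mountain pass joining the local minimizer of $(i)$ to a far dilate $u_t$ (large $t$) at negative energy, the barrier $f\ge0$ on $[R_0,R_1]$ supplying the mountain. Working with the augmented functional $\widehat J(t,u)=J_\mu(u_t)$ in the spirit of Jeanjean produces a bounded Palais--Smale sequence asymptotically satisfying $P_\mu=0$, hence a candidate critical point. The main obstacle is compactness: the critical exponent permits a loss quantized by $\tfrac1N S^{N/p}$, so one must show the mountain pass level $\sigma(c,\mu)$ lies strictly below the threshold $\tfrac1N S^{N/p}$ beyond which a bubble may split off. This is achieved by inserting truncated, rescaled Aubin--Talenti instantons $U_\varepsilon$ into the path and expanding $J_\mu$ as $\varepsilon\to0$: the gain from $\mu|U_\varepsilon|_q^q$ must dominate the cutoff error in $|\nabla(\eta U_\varepsilon)|_p^p-S^{N/p}$, and balancing these powers of $\varepsilon$ is exactly what forces $N>2$ and $N^{2/3}<p<3$. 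Once the strict inequality holds, concentration-compactness restores convergence, the limit is a normalized solution of mountain pass type, and positivity follows as in $(i)$.
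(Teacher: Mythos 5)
Your overall architecture matches the paper's (local minimization in a gradient ball for $(i)$, a truncated functional plus genus for $(ii)$, a mountain pass with Jeanjean's stretched functional and a strict energy bound for $(iii)$), and part $(ii)$ is essentially the paper's argument. But two steps as written would not go through.

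First, in $(i)$ you take a bare minimizing sequence on $A_{R_0}(c)$ and conclude ``thus $u_n\to u$ strongly, $|u|_p=c$.'' The Brezis--Lieb step only controls the gradient and the critical term: it yields $J_\mu(u)\le m(c,\mu)$ and $|\nabla(u_n-u)|_p\to 0$, but the radial embedding is \emph{not} compact into $L^p$, so nothing prevents the limit from satisfying $|u|_p=c'<c$ with the missing mass spreading to infinity at zero energy cost. This is the central difficulty of normalized-solution problems and it is exactly what your sketch skips. To close it you must either (a) prove strict subadditivity $m(c_0,\mu)<m(c,\mu)+m(\sqrt[p]{c_0^p-c^p},\mu)$ (the paper's Lemma \ref{lem4.11}, used there for part $(iii)$) and derive a contradiction from $J_\mu(u)=m(c,\mu)<m(c',\mu)\le J_\mu(u)$, or (b) do what the paper does: upgrade to a Palais--Smale sequence via Ekeland's variational principle, extract the Lagrange multipliers, show $\lambda<0$ via the Poho\v zaev identity, and then recover $L^p$-convergence from the equation (testing the difference of the equations with $u_n-u$ and using \eqref{eq2.5}), together with Lions' concentration-compactness and the radial decay estimate to handle the critical term. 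Relatedly, you invoke the Lagrange multiplier rule for the limit before knowing $u\in S_c$, which is circular in your ordering.

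Second, in $(iii)$ your compactness threshold is wrong. Because the weak limit of the mountain-pass Palais--Smale sequence can itself be a nontrivial constrained critical point at negative level (e.g.\ the ground state at level $m(c,\mu)<0$), the splitting alternative in Lemma \ref{lem3.3} reads $J_\mu(u)\le m^\ast-\frac1N S^{N/p}$ with $J_\mu(u)\ge m(c,\mu)$, so the inequality one must establish is $m^\ast<m(c,\mu)+\frac1N S^{N/p}$, which is \emph{strictly stronger} than $m^\ast<\frac1N S^{N/p}$ since $m(c,\mu)<0$. Consequently the test functions cannot be bare truncated instantons $U_\va$ on the path; they must be the ground state plus a bubble, $u_\ast+\tau u_\va$, suitably rescaled back to $S_c$, as in the paper's Lemma \ref{lem4.14}. (Also, in that estimate the decisive negative term is the cross term $-\int_{\R^N}u_\ast(\tau u_\va)^{p^\ast-1}\sim -C\va^{(N-p)(p-1)/p^2}$ from the critical nonlinearity, not the gain from $\mu|u_\va|_q^q$; the restrictions $N^{2/3}<p<3$ enter through the elementary inequality $(1+t)^p\le 1+t^p+pt+C_pt^2$ and through comparing this exponent with $O(\va^{(N-p)/p})$ and $O(\va^{2(N-p)/p^2})$.)
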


\begin{theorem}\label{the1.2}
Let $\mu,\,c>0$ be such that \eqref{eq1.6} holds. Assume that $p<N^{2/3}$ and $q=p+p^2/N$. Then \eqref{eq1.1} has a positive
ground state solution in $S_c$ for some $\lambda<0$.
\end{theorem}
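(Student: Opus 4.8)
The plan is to realize the ground state as a minimizer of $J_\mu$ on the Poho\v zaev manifold $\cM_{c,\mu}$, and to recover compactness from a strict energy estimate below the first noncompactness level $\frac1N S^{N/p}$.

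First I would analyze the fiber maps. For $u\in S_c$ set $(t\star u)(x)=t^{N/p}u(tx)$, which preserves $|\cdot|_p$, and compute
$$J_\mu(t\star u)=\frac{t^p}{p}|\nabla u|_p^p-\frac{t^{p^\ast}}{p^\ast}|u|_{p^\ast}^{p^\ast}-\frac{\mu}{q}t^{\gamma_q q}|u|_q^q.$$
In the $L^p$-critical regime $q=p+p^2/N$ one has $\gamma_q q=p$ and $\gamma_q=p/q$, so the first and third terms scale identically and $J_\mu(t\star u)=a_u t^p-b_u t^{p^\ast}$ with $b_u=|u|_{p^\ast}^{p^\ast}/p^\ast>0$ and $a_u=\frac1p|\nabla u|_p^p-\frac\mu q|u|_q^q$. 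Using Lemma \ref{lem1.1} with $|u|_p=c$ and $\gamma_q q=p$ gives $|u|_q^q\le C(q)c^{(1-\gamma_q)q}|\nabla u|_p^p$, so \eqref{eq1.6} (which reads $\mu c^{(1-\gamma_q)q}<q/(pC(q))$ here) forces $a_u>0$ for every $u\in S_c\setminus\{0\}$. Since $p<p^\ast$, the map $t\mapsto a_ut^p-b_ut^{p^\ast}$ has a unique positive critical point $t_u$, which is its global maximum, and $t_u\star u$ is the unique element of $\cM_{c,\mu}$ on the fiber. This shows $\cM_{c,\mu}\neq\emptyset$ and yields the characterization $m(c,\mu)=\inf_{u\in S_c}\max_{t>0}J_\mu(t\star u)$.

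Second, on $\cM_{c,\mu}$, substituting $|\nabla u|_p^p=|u|_{p^\ast}^{p^\ast}+\mu\gamma_q|u|_q^q$ into $J_\mu$ gives the identity $J_\mu(u)=\frac1N|u|_{p^\ast}^{p^\ast}$. Combining $P_\mu(u)=0$ with Lemma \ref{lem1.1} and the Sobolev inequality $|u|_{p^\ast}^{p^\ast}\le S^{-p^\ast/p}|\nabla u|_p^{p^\ast}$ produces a uniform positive lower bound for $|u|_{p^\ast}^{p^\ast}$ on the manifold, whence $0<m(c,\mu)<\infty$. The crucial point, and the main obstacle, is the strict inequality $m(c,\mu)<\frac1N S^{N/p}$. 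I would obtain it by testing the min-max with a truncated Aubin--Talenti instanton $U_\va$, projecting onto $S_c$ and estimating $\max_{t>0}J_\mu(t\star U_\va)$ as $\va\to0^+$. The competition is between the positive remainder produced by the critical quantities $|\nabla U_\va|_p^p$, $|U_\va|_{p^\ast}^{p^\ast}$ and the strictly negative contribution of the lower-order term $-\frac\mu q|U_\va|_q^q$; the restriction $p<N^{2/3}$ is exactly what makes the latter dominate, so the maximal energy drops below $\frac1N S^{N/p}$. This asymptotic analysis is the heart of the proof.

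Third, for compactness I would take a minimizing sequence $(u_n)\subset\cM_{c,\mu}$ for $m(c,\mu)$, which by Ekeland's principle may be chosen to be a Palais--Smale sequence for $J_\mu|_{S_c}$ with $P_\mu(u_n)\to0$, producing Lagrange multipliers $\lambda_n$ with $J_\mu'(u_n)-\lambda_n|u_n|^{p-2}u_n\to0$ in the dual $E'$. Boundedness in $E$ follows from $J_\mu(u_n)=\frac1N|u_n|_{p^\ast}^{p^\ast}+o(1)$ together with the estimates above. Up to a subsequence, $u_n\rightharpoonup u$ in $E$, and since $E=W_r^{1,p}(\R^N)$ embeds compactly into $L^q(\R^N)$ for $p<q<p^\ast$, we get $|u_n|_q^q\to|u|_q^q$; the almost everywhere convergence of the gradients lets me pass to the limit in the quasilinear term, so $u$ weakly solves \eqref{eq1.1} for the limit multiplier $\lambda$, hence $P_\mu(u)=0$ and $J_\mu(u)=\frac1N|u|_{p^\ast}^{p^\ast}\ge0$. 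Writing $v_n=u_n-u$ and applying the Brezis--Lieb lemma to $P_\mu(u_n)=0$ and to $J_\mu(u_n)\to m(c,\mu)$, I find $|\nabla v_n|_p^p\to\ell$ and $|v_n|_{p^\ast}^{p^\ast}\to\ell$ with $m(c,\mu)=J_\mu(u)+\frac1N\ell$. If $\ell>0$, Sobolev forces $\ell\ge S^{N/p}$ and hence $m(c,\mu)\ge\frac1N S^{N/p}$, contradicting the second step; so $\ell=0$ and $u_n\to u$ strongly in $D^{1,p}(\R^N)$ and in $L^{p^\ast}(\R^N)$.

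Finally, strong convergence gives $m(c,\mu)=J_\mu(u)>0$, so $u\neq0$ and $|u|_q^q>0$. Testing the approximate equation for $u_n$ and the equation for $u$ against $u_n$ and $u$ respectively, and subtracting the Poho\v zaev identities, yields $\lambda_n c^p=\mu(\gamma_q-1)|u_n|_q^q+o(1)$ and $\lambda|u|_p^p=\mu(\gamma_q-1)|u|_q^q$. Since $\gamma_q=p/q<1$ and $|u|_q^q>0$, the first relation gives $\lambda<0$, and comparing the two relations forces $|u|_p^p=c^p$, i.e. $u\in S_c$. Thus $u\in\cM_{c,\mu}$ realizes $m(c,\mu)$, and since every normalized solution lies on $\cM_{c,\mu}$ by the Poho\v zaev identity, $u$ is a ground state solution with $\lambda<0$. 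Replacing $u$ by $|u|$ leaves all constraints and the energy unchanged, so I may assume $u\ge0$, and the strong maximum principle for the $p$-Laplacian then gives $u>0$, completing the proof.
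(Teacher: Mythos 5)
Your proposal follows essentially the same architecture as the paper: fiber-map analysis showing $\cM_{c,\mu}=\cM_{c,\mu}^-$ with a unique maximum point on each fiber (the paper's Lemmas \ref{lem5.2}--\ref{lem5.4}), the bounds $0<m(c,\mu)<\frac1N S^{N/p}$ (Lemmas \ref{lem5.5} and \ref{lem5.7}), a Palais--Smale sequence carrying the extra information $P_\mu(u_n)\to0$ (Lemma \ref{lem5.8}), and the Brezis--Lieb splitting plus the strict energy bound to rule out loss of compactness (Lemma \ref{lem3.3}). Two of your steps are implemented differently from the paper, and both variants are workable: you produce the constrained Palais--Smale sequence by Ekeland's principle on $\cM_{c,\mu}$ together with the natural-constraint structure, whereas the paper uses the identity $m=m^\ast$ (Lemma \ref{lem5.6}) and the stretched functional $\tilde J(u,t)=J(u^t)$ with Ghoussoub's minimax principle, which avoids having to control the second Lagrange multiplier attached to the constraint $P_\mu=0$ --- if you go your route you must actually justify that upgrade, which is exactly where $\cM^0=\emptyset$ and the nondegeneracy $\Psi_u''(t_u)<0$ enter. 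Likewise, you recover $|u|_p=c$ and $\lambda<0$ from the two multiplier identities, while the paper deduces strong $L^p$ convergence directly from the monotonicity inequality \eqref{eq2.5}; and you restore positivity by replacing $u$ with $|u|$ a posteriori (which again leans on the natural-constraint property to conclude $|u|$ is a critical point), while the paper builds $|u_n^-|_p\to0$ into the Palais--Smale sequence.

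The one place where I would press you is the strict inequality $m(c,\mu)<\frac1N S^{N/p}$, which you correctly identify as the heart of the matter but only assert. In the paper this is Lemma \ref{lem5.7}, proved as in Lemma \ref{lem5.13}, and it is not a one-line domination argument: because of the normalization $a_\va=c/|u_\va|_p$, the size of the competing term $e^{\gamma_q q t_\va}|v_\va|_q^q$ is governed by the ratio $|u_\va|_q^q/|u_\va|_p^{(1-\gamma_q)q}$, whose rate in $\va$ changes according to whether the exponents sit above or below $p^\ast(1-1/p)$ in \eqref{eq2.4}; this forces the two-case analysis $p\le N^{1/2}$ versus $N^{1/2}<p<N^{2/3}$, and the comparison of $\va^{(pN+pq-qN)(p^2-N)/p^3}$ against $O(\va^{(N-p)/p})$ is precisely where the hypothesis $p<N^{2/3}$ is consumed. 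Without carrying out this computation the proof does not close, so this step should be regarded as the genuine remaining work in your outline rather than a routine verification.
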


\begin{theorem}\label{the1.3}Assume that $p<N^{2/3}$ and $p+p^2/N<q<p^\ast$. Then for any $c,\mu>0$, the equation \eqref{eq1.1} has a
positive ground state solution in $S_c$ for some $\lambda<0$.
\end{theorem}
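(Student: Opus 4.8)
The plan is to realize the ground state as a minimizer of $J_\mu$ over the Poho\v zaev manifold $\cM_{c,\mu}$ and to recover compactness from a strict sub-threshold inequality $m(c,\mu)<\frac1N S^{N/p}$. The natural tool is the mass-preserving dilation $(s\star u)(x)=s^{N/p}u(sx)$, under which $|s\star u|_p=|u|_p$ while
$$\Psi_u(s):=J_\mu(s\star u)=\frac{s^p}{p}\int_{\R^N}|\nabla u|^p-\frac{s^{p^\ast}}{p^\ast}\int_{\R^N}|u|^{p^\ast}-\frac{\mu s^{\gamma_q q}}{q}\int_{\R^N}|u|^q,$$
and $s\,\Psi_u'(s)=P_\mu(s\star u)$. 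In the supercritical regime $q>p+p^2/N$ one has $p<\gamma_q q<p^\ast$, so for every $u\in S_c$ the function $\Psi_u$ is positive and increasing near $s=0$, tends to $-\infty$ as $s\to\infty$, and (since $\Psi_u'(s)=s^{p-1}g(s)$ with $g$ strictly decreasing from $\int_{\R^N}|\nabla u|^p>0$ to $-\infty$) possesses a unique critical point $s_u$, a strict global maximum. Hence $s_u\star u\in\cM_{c,\mu}$, the manifold is nonempty for all $c,\mu>0$ (no smallness is needed, consistent with $\alpha(q)=+\infty$), and $m(c,\mu)=\inf_{u\in S_c}\max_{s>0}J_\mu(s\star u)$ is a genuine min--max level.

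First I would show $m(c,\mu)>0$. Eliminating the critical term via $J_\mu=J_\mu-\frac{1}{\gamma_q q}P_\mu$ on $\cM_{c,\mu}$ gives
$$J_\mu(u)=\Big(\frac1p-\frac{1}{\gamma_q q}\Big)\int_{\R^N}|\nabla u|^p+\Big(\frac{1}{\gamma_q q}-\frac{1}{p^\ast}\Big)\int_{\R^N}|u|^{p^\ast},$$
with both coefficients positive; combining $P_\mu(u)=0$ with the Sobolev and Gagliardo--Nirenberg inequalities (Lemma \ref{lem1.1}) forces $|\nabla u|_p\ge\delta>0$ on $\cM_{c,\mu}$, whence $m(c,\mu)\ge(\frac1p-\frac1{\gamma_q q})\delta^p>0$.

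The decisive step, and the main obstacle, is the strict estimate $m(c,\mu)<\frac1N S^{N/p}$, where $\frac1N S^{N/p}$ is the energy of the Aubin--Talenti extremal for $S$ and the threshold below which the critical nonlinearity cannot cause loss of compactness. I would test with a truncation $u_\va$ of the extremal $U_\va$, normalize it in $S_c$, and estimate $\max_{s>0}J_\mu(s\star u_\va)$. Expanding $|\nabla u_\va|_p^p$, $|u_\va|_{p^\ast}^{p^\ast}$ and $|u_\va|_q^q$ in powers of $\va$, one must show that the strictly negative contribution of $-\frac\mu q|u_\va|_q^q$ dominates the positive truncation errors for small $\va$; this is exactly where the hypotheses $p<N^{2/3}$ and $p+p^2/N<q<p^\ast$ are used to make the order of the gain beat that of the remainder. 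Since $\mu>0$ only lowers the level, the bound holds for every $\mu,c>0$.

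Finally I would extract a minimizer. Via Ekeland's principle on $\cM_{c,\mu}$ (or a Jeanjean-type minimax on $\R\times S_c$ using $\star$) I produce a sequence $u_n\in\cM_{c,\mu}$ with $J_\mu(u_n)\to m(c,\mu)$, $(J_\mu|_{S_c})'(u_n)\to0$ and $P_\mu(u_n)\to0$; the representation above bounds $|\nabla u_n|_p$, so $\{u_n\}$ is bounded in $E=W_r^{1,p}(\R^N)$ and $u_n\rightharpoonup u$, with $u_n\to u$ in $L^q$ by the compact radial embedding. Testing the Euler--Lagrange equation against $u_n$ and using $P_\mu(u_n)\to0$ yields $\lambda_n c^p=\mu(\gamma_q-1)|u_n|_q^q+o(1)$; since $\gamma_q<1$ and the sub-threshold bound forces $u\neq0$ (otherwise all energy would concentrate in a critical bubble at level $\ge\frac1N S^{N/p}$), one gets $\lambda<0$. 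The negativity of $\lambda$ makes the operator $u\mapsto-\Delta_p u-\lambda|u|^{p-2}u$ coercive on $E$; combined with the $(S_+)$-property of $-\Delta_p$, the $L^q$-compactness, and the Brezis--Lieb splitting of the critical term below the threshold, this upgrades weak to strong convergence in $E$, so $|u|_p=c$ and $u\in\cM_{c,\mu}$ attains $m(c,\mu)$. Replacing $u$ by $|u|$ leaves $J_\mu$ and $P_\mu$ unchanged, and the strong maximum principle for the $p$-Laplacian gives $u>0$; by the Poho\v zaev identity every normalized solution lies in $\cM_{c,\mu}$, so $u$ is the desired positive ground state with $\lambda<0$.
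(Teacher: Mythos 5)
Your proposal follows essentially the same route as the paper: the unique-maximum fiber-map analysis showing $\cM_{c,\mu}=\cM_{c,\mu}^-$ and $m(c,\mu)=\inf_{S_c}\max_{s}J_\mu(s\star u)>0$ (Lemmas \ref{lem5.9}--\ref{lem5.12}), the strict sub-threshold bound $m(c,\mu)<\frac1N S^{N/p}$ via truncated Aubin--Talenti bubbles with the hypotheses $p<N^{2/3}$ and $q>p+p^2/N$ entering exactly where you say (Lemma \ref{lem5.13}), a Jeanjean-type Palais--Smale sequence carrying $P_\mu(u_n)\to0$ (Lemma \ref{lem5.14}), and the compactness dichotomy of Lemma \ref{lem3.3} resolved by the sign of $J_\mu(u)=J_\mu(u)-\frac1p P_\mu(u)>0$. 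The only part left schematic is the $\varepsilon$-expansion in the sub-threshold estimate, which the paper carries out by splitting into the cases $p\le N^{1/2}$ and $N^{1/2}<p<N^{2/3}$ according to the order of $|u_\varepsilon|_q^q$; your outline correctly identifies this as the decisive computation.
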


\begin{theorem}\label{the1.4}
Let $u_{c,\mu}\in S_c$ be
the positive ground state solution of \eqref{eq1.1} obtained in Theorems \ref{the1.1}, \ref{the1.2} and \ref{the1.3} with energy
level $m(c,\mu)$.
\begin{itemize}[leftmargin=9mm]
\item[$(i)$] If $p<q<p+p^2/N$, then
$$m(c,\mu)\to 0\ \ \text{and}\ \ |\nabla u_{c,\mu}|_p^p\to 0
\ \ \text{as}\ \mu\to 0^+.$$

\item[$(ii)$] If $p<N^{2/3}$ and $p+p^2/N\leq q<p^\ast$, then
$u_{c,\mu}\rightharpoonup 0$ in $E$,
$$m(c,\mu)\to \frac{1}{N}S^{N/p}\ \ \text{and}
\ \ |\nabla u_{c,\mu}|_p^p\to S^{N/p}
\ \ \text{as}\ \mu\to 0^+.$$
\end{itemize}
\end{theorem}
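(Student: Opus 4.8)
The plan is to exploit a single algebraic identity valid on the Poho\v zaev manifold and then to separate the two regimes according to the sign of $p-\gamma_q q$. Using $P_\mu(u)=0$ to eliminate $\int_{\R^N}|u|^{p^\ast}$ from $J_\mu$, one finds for every $u\in\cM_{c,\mu}$ that
$$J_{\mu}(u)=\frac{1}{N}|\nabla u|_p^p-\mu\,\kappa_q\,|u|_q^q,\qquad \kappa_q:=\frac1q-\frac{\gamma_q}{p^\ast}>0,$$
together with the companion relation $|u|_{p^\ast}^{p^\ast}=|\nabla u|_p^p-\mu\gamma_q|u|_q^q$. A short computation gives $N\kappa_q-\gamma_q=\frac{N(p-\gamma_q q)}{pq}$, so the dichotomy $\gamma_q q\lessgtr p$ (equivalently $q\lessgtr p+p^2/N$) is exactly what decides whether the Gagliardo--Nirenberg exponent $\gamma_q q$ lies below, at, or above $p$. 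This is the hinge of the whole argument.

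For part $(i)$, since $\gamma_q q<p$, I would insert the bound $|u|_q^q\le C(q)c^{(1-\gamma_q)q}|\nabla u|_p^{\gamma_q q}$ of Lemma \ref{lem1.1} into the identity to get, with $s=|\nabla u|_p$,
$$J_\mu(u)\ge h_\mu(s):=\frac1N s^p-\mu\,\kappa_q C(q)c^{(1-\gamma_q)q}\,s^{\gamma_q q}.$$
Because $\gamma_q q<p$, the function $h_\mu$ has a unique minimizer $s_\mu\to0$ with minimum value $h_\mu(s_\mu)\to0^-$ as $\mu\to0^+$; hence $m(c,\mu)\ge h_\mu(s_\mu)\to0^-$. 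Combined with $m(c,\mu)=J_\mu(u_{c,\mu})<0$ from Theorem \ref{the1.1}, a squeeze gives $m(c,\mu)\to0^-$. For the gradient, the local--minimizer property on $A_{R_0}(c)$ yields the uniform bound $|\nabla u_{c,\mu}|_p<R_0$, whence $\mu|u_{c,\mu}|_q^q\to0$; reading the identity as $\frac1N|\nabla u_{c,\mu}|_p^p=m(c,\mu)+\mu\kappa_q|u_{c,\mu}|_q^q$ then forces $|\nabla u_{c,\mu}|_p^p\to0$.

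For part $(ii)$, the same estimate $m(c,\mu)<\frac1N S^{N/p}$ used to prove existence in Theorems \ref{the1.2} and \ref{the1.3} gives $\limsup_{\mu\to0^+}m(c,\mu)\le\frac1N S^{N/p}$. For the matching lower bound I fix $\mu_n\to0^+$, write $u_n=u_{c,\mu_n}$, $A_n=|\nabla u_n|_p^p$, $D_n=|u_n|_q^q$, and first prove $A_n$ is bounded. When $q=p+p^2/N$ this is immediate: $\gamma_q q=p$, so $m(c,\mu_n)\ge A_n\big(\frac1N-\mu_n\kappa_q C(q)c^{(1-\gamma_q)q}\big)$ and assumption \eqref{eq1.6} makes the bracket a positive constant. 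When $q>p+p^2/N$ one argues by contradiction: if $A_n\to\infty$, then since $m(c,\mu_n)$ is bounded the identity forces $\mu_n\gamma_q D_n=\frac{\gamma_q}{N\kappa_q}A_n(1+o(1))$, and because $N\kappa_q<\gamma_q$ in this range the companion relation yields $|u_n|_{p^\ast}^{p^\ast}=A_n\big(1-\frac{\gamma_q}{N\kappa_q}+o(1)\big)<0$, a contradiction. With $A_n$ bounded, $\mu_nD_n\to0$ by Gagliardo--Nirenberg, so the companion relation and the Sobolev inequality $S|u_n|_{p^\ast}^p\le A_n$ give $A_n\le S^{-p^\ast/p}A_n^{p^\ast/p}+o(1)$; dividing by $A_n$ produces $A_n\ge S^{N/p}+o(1)$, hence $m(c,\mu_n)=\frac1N A_n-o(1)\ge\frac1N S^{N/p}+o(1)$. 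Together with the upper bound this yields $m(c,\mu)\to\frac1N S^{N/p}$ and then $|\nabla u_{c,\mu}|_p^p\to S^{N/p}$.

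It remains to show $u_{c,\mu}\rightharpoonup0$. The previous limits force $A_n/|u_n|_{p^\ast}^p\to S$, i.e. $\{u_n\}$ is a minimizing sequence for the Sobolev constant $S$ with $|\nabla u_n|_p^p\to S^{N/p}$; since the extremals of $S$ on $\R^N$ form the noncompact family of Talenti functions, $\{u_n\}$ must concentrate and its weak limit in $E$ is $0$. I expect the two genuinely delicate points to be the boundedness of $\{u_{c,\mu}\}$ in the $L^p$-supercritical case, which is handled above through the nonnegativity of $|u_n|_{p^\ast}^{p^\ast}$ once $N\kappa_q<\gamma_q$ is recorded, and the rigorous passage to $u_{c,\mu}\rightharpoonup0$ at the critical level, where a Brezis--Lieb/concentration-compactness analysis is needed to exclude that the weak limit is a nonzero solution of the limiting problem $-\Delta_p u=|u|^{p^\ast-2}u$ (note the Lagrange multipliers satisfy $\lambda_{c,\mu}=\mu(\gamma_q-1)|u_{c,\mu}|_q^q/c^p\to0$).
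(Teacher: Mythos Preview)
Your argument is correct, and the overall architecture---use $P_\mu=0$ to rewrite $J_\mu$ on $\cM_{c,\mu}$ and then analyse the sign of $p-\gamma_q q$---matches the paper's. The execution, however, differs in a few places. For part $(i)$ the paper first observes that the radius $R_0=R_0(c,\mu)$ of Lemma~\ref{lem4.1} tends to $0$ as $\mu\to0^+$, which immediately gives $|\nabla u_{c,\mu}|_p\to0$, and then squeezes $m(c,\mu)$ using the lower bound \eqref{eq4.1}; you instead prove the energy limit first via your auxiliary function $h_\mu$ and recover the gradient limit from the identity. Both work; be aware that $R_0$ depends on $\mu$ (your phrase ``uniform bound'' should be read as ``$R_0(\mu)$ stays bounded''). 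For part $(ii)$ the paper obtains boundedness of $\{u_{c,\mu}\}$ in one line by subtracting $\tfrac{1}{\gamma_q q}P_\mu$ rather than $\tfrac{1}{p^\ast}P_\mu$, which kills the $|u|_q^q$ term and yields $m(c,\mu)\ge(\tfrac1p-\tfrac{1}{\gamma_q q})|\nabla u_{c,\mu}|_p^p$; your contradiction through the nonnegativity of $|u|_{p^\ast}^{p^\ast}$ when $N\kappa_q<\gamma_q$ is correct but less direct. The paper also proves that $\mu\mapsto m(c,\mu)$ is nonincreasing via a minimax characterisation (Lemmas~\ref{lem6.1}--\ref{lem6.3}), using this both for the upper bound $m(c,\mu)\le m(c,0)=\tfrac1N S^{N/p}$ and to rule out $l=0$; you bypass monotonicity by quoting the strict inequality from Lemmas~\ref{lem5.7}/\ref{lem5.13} for the upper bound and by your dividing-by-$A_n$ argument for the lower bound, which indeed also excludes $A_n\to0$. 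For $u_{c,\mu}\rightharpoonup0$, your final remark is exactly the paper's route: $\lambda_{c,\mu}\to0$, so the weak limit solves $-\Delta_p u=|u|^{p^\ast-2}u$ in $E=W^{1,p}_r(\R^N)$, and the Talenti functions do not lie in $L^p(\R^N)$, forcing $u=0$. The ``minimizing sequence for $S$'' heuristic is less clean than this equation-based argument; you should lean on the latter.
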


For $u\in E$ and $t\in \R$, we define
$$u^t(x)=e^{\frac{N}{p}t}u(e^tx),\ {\rm for\ a.e.}\ x\in \R^N. $$
One can easily check that $|u^t|_p=|u|_p$ for any $t\in\R$.
Then $u^t\in S_c$ if and only if $u\in S_c$. Moreover, the properties of $\cM_{c,\mu}$
are closely related to the behavior of $J_{\mu}$ with respect
to such a dilation. For any $u\in S_c$, we introduce the fiber
map
$$\Psi^{\mu}_u(t)=J_{\mu}(u^t)
=\frac{e^{pt}}{p}\int_{\R^N}|\nabla u|^p
-\frac{e^{ p^\ast t}}{p^\ast}\int_{\R^N}|u|^{p^\ast}
-\mu\frac{e^{\gamma_q q t}}{q}\int_{\R^N}|u|^q.$$
The monotonicity and convexity properties of $\Psi^{\mu}_u$
will affect the structure of $\cM_{c,\mu}$ and, obviously,
$u\in\cM_{c,\mu}$ if and only if $0$ is a critical point of
the function $\Psi^{\mu}_u$. In spirit of this, we shall
decompose $\cM_{c,\mu}$ into three parts
$$\cM_{c,\mu}^+=\left\{u\in S_c:\big(\Psi_u^{\mu}\big)'(0)=0,
\big(\Psi_u^{\mu}\big)''(0)>0\right\},$$
$$\cM_{c,\mu}^0=\left\{u\in S_c:\big(\Psi_u^{\mu}\big)'(0)=0,
\big(\Psi_u^{\mu}\big)''(0)=0\right\},$$
$$\cM_{c,\mu}^-=\left\{u\in S_c:\big(\Psi_u^{\mu}\big)'(0)=0,
\big(\Psi_u^{\mu}\big)''(0)<0\right\}.$$
Clearly,
$\cM_{c,\mu}=\cM_{c,\mu}^+\cup \cM_{c,\mu}^0\cup \cM_{c,\mu}^-$.
We will see from Lemmas \ref{lem4.2}, \ref{lem5.4}, \ref{lem5.10}
 that if $p<q<p+p^2/N$ then $\cM_{c,\mu}^0=\emptyset$;
while if $p+p^2/N\leq q<p^\ast$ then
$\cM_{c,\mu}^+=\cM_{c,\mu}^0=\emptyset$.
\begin{remark}
The numbers $p+p^2/N,$ i.e., $L^p$-critical exponents
for $p$-Laplace equations in
$\R^N$, play important roles in our setting. As one
shall see, the structure of the set $\cM_{c,\mu}$ is completely
different in the four cases: $p<q<p+p^2/N$,
$p+p^2/N\leq q<p^\ast$.
\end{remark}

\begin{remark}
The assumption \eqref{eq1.6} plays important role in the study of the geometry of the constrained functional $J_\mu|_{S_c}$ for the two cases $p < q \leq p + p^2/N$
and $p + p^2/N < q < p^\ast$.
In the latter case $p + p^2/N < q < p^\ast$,
it is remarkable that we can prove
that $\alpha (q)= +\infty$, so that any $c, \mu> 0$ are admissible.
The assumption $p<N^{2/3}$ is used in the cases of $p + p^2/N\leq q < p^\ast$
in order to ensure that the ground state level $m(c, \mu)$ is less than $\frac{1}{N}S^{N/p}$, which is an
essential ingredient in our compactness argument.
\end{remark}

Since the variational setting of the problem \eqref{eq1.1} lacks an ordered Hilbert space
structure which plays an important role in dealing with the problems, we
will encounter several difficulties. The first one is the a.e. convergence
of the gradients needs a justification. Furthermore, Palais-Smale sequences of constraint functional
$J_{\mu}|_{S_c}$ are not a priori bounded and the sequence
of approximate Lagrange multipliers has to be controlled. To
overcome such a difficulty, we shall use Ekeland's variational
principle and the stretched functional
in the spirit of \cite{lj} to construct a special Palais-Smale
sequence which carries additional information related to Poho\v zaev
identity. Finally, the weak limit of a bounded Palais-Smale
sequence could not belong to the constraint $S_c$ and
we cannot directly
prove that the weak limit is a critical point of $J_{\mu}|_{S_c}$,
because the embeddings
$E\hookrightarrow L^p(\R^N)$ and $E\hookrightarrow L^{p^\ast}(\R^N)$
are not compact.

The paper is organized as follows. In Section 2, we present
some preliminary results.
We establish the existence of a special Palais-Smale sequence
for the functional $J_{\mu}|_{S_c}$ and give the compactness
analysis In Section 3. Sections 4 is devoted to the proof of Theorem
\ref{the1.1}. The proofs of  Theorems \ref{the1.2} and \ref{the1.3} are given in Sections 5, while the
asymptotic properties of normalized solutions are given in
Section 6.

{\bf Notations.} The following notations will be used frequently.
\begin{itemize}
\item $\N_+$ denotes the positive integer set.

\item $W^{1,p}(\R^N)$ is
the usual Sobolev space with the norm
$$\|u\|=\left(\int_{\R^N}|\nabla u|^p+|u|^p\right)^{1/p}.$$

\item $W_r^{1,p}(\R^N)=\{u\in W^{1,p}(\R^N) : u(x)=u(|x|)\}$ is
equipped with the standard norm $\|\cdot\|$.

\item The standard norm in $L^p(\Omega)$ is denoted by
$|\cdot|_{p,\,\Omega}$ and by $|\cdot|_p$ if $\Omega=\R^N$.

\item $D^{1,p}:=D^{1,p}(\R^N)$ is the completion of
$C_0^\infty(\R^N)$ with respect to $\|u\|_{D^{1,p}}=|\nabla u|_p$.

\item $o(1)$ means a quantity which tends to 0.

\item  The symbols $\rightarrow$ and $\rightharpoonup$ denote
the strong and weak convergence, respectively.

\item $C$, $C(\cdot)$, $C_j$ stand for positive constants whose
exact values are irrelevant.
\end{itemize}

\section{Preliminaries}

Let us now comment on the critical problem in the whole space,
namely
\begin{equation}\label{eq2.1}
-\Delta_p u=|u|^{p^\ast-2}u\ \ \text{in}\ \R^N,\ \ u\in D^{1,p}(\R^N).
\end{equation}
It follows from \cite{gv}(see also \cite{mfb})
that all the regular radial solutions to \eqref{eq2.1} are given by the
following expression:
$$U_{\va,y}(x)=\Big[\frac{\va^{\frac{1}{p}}\left(N^\frac{1}{p}(\frac{N-p}{p-1})^{\frac{p-1}{p}}\right)}
{\va+|x-y|^\frac{p}{p-1}}\Big]^{\frac{N-p}{p}} \ \ \text{with}\ \va>0\ \text{and}\ y\in\R^N.$$
Note that, by \cite{gt}, it follows that the family of functions given above are minimizers
to
$$S=\inf_{u\in D^{1,p}\backslash\{0\}}\frac{|\nabla u|_p^p}{|u|_{p^\ast}^p}.$$
Let $\eta\in C_0^\infty(\R^N)$ be a radial cut-off function satisfying
$$0\leq\eta \leq1,\ \ \eta=1\ \text{in}\ B_1(0),\ \
\eta=0\ \text{in}\ \R^N\setminus B_2(0).$$
We define $u_\va=\eta U_{\va,0}/|\eta U_{\va,0}|_{p^\ast}$ and
\begin{equation}\label{eq2.2}
v_\va=\frac{c u_\va}{|u_\va|_p}
=a_\va u_\va.
\end{equation}
The following estimates can be deduced by standard arguments
(see \cite{gapa1, dh}): as $\va\to 0^+$,
\begin{equation}\label{eq2.3}
|\nabla u_\va|_p^p=S+O(\va^{\frac{N-p}{p}})
\end{equation}
and
\begin{equation}\label{eq2.4}
|u_\va|_s^s=\left\{\begin{array}{ll}
O(\va^{(N(p-s)+sp)(p-1)/p^2}),& \text{if}\ s>p^\ast(1-\frac{1}{p}),\\
O(\va^{N(p-1)/p^2}|\ln\va|),& \text{if}\ s=p^\ast(1-\frac{1}{p}),\\
O(\va^{s(N-p)/p^2}),& \text{if}\ s<p^\ast(1-\frac{1}{p}).
\end{array}\right.
\end{equation}
Then it is easy to obtain the following lemma.
\begin{lemma}\label{lem2.1}
Let $g_\va:[0,+\infty)\to\R$ be defined by
$$g_\va(t)=\frac{t^p}{p}|\nabla v_\va|_p^p
-\frac{t^{p^\ast}}{p^\ast}|v_\va|_{p^\ast}^{p^\ast}.$$
Then, as $\va\to 0^+$, we deduce
$$\sup_{t>0}g_\va(t)\leq\frac{1}{N}S^{N/p}+O(\va^{\frac{N-p}{p}}).$$
\end{lemma}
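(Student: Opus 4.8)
The plan is to treat $g_\va$ as an elementary one-variable function of $t>0$ and compute its maximum explicitly. Writing $A_\va=|\nabla v_\va|_p^p$ and $B_\va=|v_\va|_{p^\ast}^{p^\ast}$, the function $g_\va(t)=\frac{t^p}{p}A_\va-\frac{t^{p^\ast}}{p^\ast}B_\va$ behaves like $t^pA_\va/p>0$ near $t=0$ (so it is increasing there) and tends to $-\infty$ as $t\to+\infty$, since $p^\ast>p$. Hence $g_\va$ attains its supremum at a unique interior critical point $t_\va>0$. First I would solve the first-order condition $g_\va'(t_\va)=t_\va^{p-1}A_\va-t_\va^{p^\ast-1}B_\va=0$, which gives $t_\va^{p^\ast-p}=A_\va/B_\va$.

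Next I would substitute this relation back into $g_\va$. Using $t_\va^{p^\ast}=t_\va^p\,A_\va/B_\va$ one finds $g_\va(t_\va)=t_\va^pA_\va\big(\frac1p-\frac1{p^\ast}\big)$, and since $p^\ast=\frac{Np}{N-p}$ yields the identity $\frac1p-\frac1{p^\ast}=\frac1N$, this collapses to $g_\va(t_\va)=\frac1N\,t_\va^pA_\va$. Inserting $t_\va^p=(A_\va/B_\va)^{p/(p^\ast-p)}$ and simplifying the exponents (using $\frac{p}{p^\ast-p}=\frac{N-p}{p}$ together with $1+\frac{N-p}{p}=\frac Np$ and $p^\ast\cdot\frac{N-p}{p}=N$) gives the clean scale-invariant form
$$\sup_{t>0}g_\va(t)=g_\va(t_\va)=\frac1N\left(\frac{|\nabla v_\va|_p^p}{|v_\va|_{p^\ast}^p}\right)^{N/p}.$$

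The key simplification is that the Sobolev-type ratio on the right-hand side is invariant under the rescaling $v_\va=a_\va u_\va$ of \eqref{eq2.2}, because both $|\nabla v_\va|_p^p$ and $|v_\va|_{p^\ast}^p$ carry the same factor $a_\va^p$; therefore it equals $|\nabla u_\va|_p^p/|u_\va|_{p^\ast}^p$. Since $u_\va$ is normalized so that $|u_\va|_{p^\ast}=1$, the ratio is simply $|\nabla u_\va|_p^p$. At this stage I would invoke the estimate \eqref{eq2.3}, namely $|\nabla u_\va|_p^p=S+O(\va^{(N-p)/p})$, to obtain $g_\va(t_\va)=\frac1N\big(S+O(\va^{(N-p)/p})\big)^{N/p}$.

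The only genuine (and minor) obstacle is the final expansion of the $(N/p)$-th power: writing $S+O(\va^{(N-p)/p})=S\big(1+O(\va^{(N-p)/p})\big)$ and applying $(1+x)^{N/p}=1+O(x)$ for small $x$ yields $\big(S+O(\va^{(N-p)/p})\big)^{N/p}=S^{N/p}+O(\va^{(N-p)/p})$, whence $\sup_{t>0}g_\va(t)=\frac1N S^{N/p}+O(\va^{(N-p)/p})$, which even gives the claimed inequality with equality up to the error term. Everything beyond this is routine exponent bookkeeping, and no compactness or variational argument is required; the entire proof rests on the explicit maximization and the single asymptotic input \eqref{eq2.3}.
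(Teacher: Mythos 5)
Your proof is correct, and it supplies exactly the elementary maximization that the paper leaves implicit (the paper states the lemma as an easy consequence of \eqref{eq2.3}--\eqref{eq2.4} without writing out a proof): the critical-point computation giving $\sup_{t>0}g_\va(t)=\frac1N\bigl(|\nabla v_\va|_p^p/|v_\va|_{p^\ast}^p\bigr)^{N/p}$, the scale invariance of that quotient under $v_\va=a_\va u_\va$, the normalization $|u_\va|_{p^\ast}=1$, and the expansion of the $(N/p)$-th power are all accurate. No gaps.
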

\begin{lemma}\cite[Lemma 3.6]{BS}\label{lem2.2}
For $u\in S_c$ and $s\in\R$, the map $\varphi\mapsto \varphi^s$ from $T_uS_c$ to
$T_{u^s}S_c$ is a linear isomorphism with inverse $\psi\mapsto\psi^{-s}$, where
$T_uS_c:=\{\varphi\in S_c:\int_{\R^N}|u|^{p-2}u\varphi=0\}$.
\end{lemma}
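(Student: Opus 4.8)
The plan is to verify the three properties implicit in the claim—linearity, the mapping property between the two tangent spaces, and invertibility with the stated inverse—together with the continuity that makes this a genuine isomorphism of Banach (sub)spaces. First I would record that linearity of $\varphi\mapsto\varphi^s$ is immediate from the formula $\varphi^s(x)=e^{\frac{N}{p}s}\varphi(e^sx)$: both the spatial dilation $x\mapsto e^sx$ and the amplitude factor $e^{\frac{N}{p}s}$ act linearly, so $(\varphi_1+\lambda\varphi_2)^s=\varphi_1^s+\lambda\varphi_2^s$ for every $\lambda\in\R$.

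The heart of the argument is to show that the map sends $T_uS_c$ into $T_{u^s}S_c$, i.e. that $\int_{\R^N}|u^s|^{p-2}u^s\varphi^s=0$ whenever $\int_{\R^N}|u|^{p-2}u\varphi=0$. I would compute the integrand directly: from $u^s(x)=e^{\frac{N}{p}s}u(e^sx)$ one gets $|u^s(x)|^{p-2}u^s(x)=e^{\frac{N}{p}s(p-1)}|u(e^sx)|^{p-2}u(e^sx)$, and multiplying by $\varphi^s(x)=e^{\frac{N}{p}s}\varphi(e^sx)$ contributes a further factor $e^{\frac{N}{p}s}$, so that the two amplitude powers combine into the prefactor $e^{Ns}$. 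The change of variables $y=e^sx$ supplies $dx=e^{-Ns}\,dy$, the exponentials cancel exactly, and one is left with $\int_{\R^N}|u|^{p-2}u\varphi$. This scaling invariance of the pairing $\int_{\R^N}|u|^{p-2}u\varphi$ is the real content of the lemma: it shows at once that the orthogonality constraint is preserved, and (applying the same computation with $u$ in place of $\varphi$) that $|u^s|_p=|u|_p$, so that $T_{u^s}S_c$ is indeed the correct target.

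Invertibility I would establish via the group law of the dilation. A direct substitution gives $(\varphi^s)^{-s}(x)=e^{-\frac{N}{p}s}\varphi^s(e^{-s}x)=e^{-\frac{N}{p}s}e^{\frac{N}{p}s}\varphi(e^se^{-s}x)=\varphi(x)$, and symmetrically $(\psi^{-s})^s=\psi$; hence $\varphi\mapsto\varphi^s$ is a bijection with inverse $\psi\mapsto\psi^{-s}$. To upgrade this to a linear isomorphism of the closed tangent subspaces, I would record the elementary scaling identities $|\varphi^s|_p=|\varphi|_p$ and $|\nabla\varphi^s|_p=e^s|\nabla\varphi|_p$, which yield $\|\varphi^s\|\le\max\{1,e^s\}\,\|\varphi\|$ and an analogous bound for the inverse, so that both maps are bounded.

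I do not anticipate a serious obstacle: the only point demanding care is the exponent bookkeeping in the change of variables, where the amplitude power $\frac{N}{p}s(p-1)$ coming from $|u^s|^{p-2}u^s$ must be combined with the factor $\frac{N}{p}s$ from $\varphi^s$ to give exactly $Ns$, cancelling the Jacobian $e^{-Ns}$. One minor notational caveat is that $T_uS_c$ should be read as $\{\varphi\in E:\int_{\R^N}|u|^{p-2}u\varphi=0\}$, the tangent space to $S_c$ at $u$, rather than literally as a subset of $S_c$; with that reading every step above is a linear statement on $E$.
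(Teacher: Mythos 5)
Your proposal is correct and follows essentially the same route as the paper's proof: the same change of variables $y=e^sx$ showing that the pairing $\int_{\R^N}|u|^{p-2}u\varphi$ is invariant under the dilation (the exponents $\frac{N}{p}s(p-1)+\frac{N}{p}s=Ns$ cancelling the Jacobian), followed by the group law $(w^t)^s=w^{t+s}$, $w^0=w$ for invertibility. The extra remarks on boundedness of the map and on reading $T_uS_c$ as a subspace of $E$ rather than of $S_c$ are sensible but not needed beyond what the paper records.
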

\begin{proof}[\bf Proof]
We follow the approach in \cite[Lemma 3.6]{BS}. For $\varphi\in T_uS_c$ and $t>0$, we have
$$\int_{\R^N}|u^t(x)|^{p-2}u^t(x)\varphi^t(x)=\int_{\R^N}e^{Nt}|u(e^tx)|^{p-2}u(e^tx)\varphi(e^tx)=
\int_{\R^N}|u(y)|^{p-2}u(y)\varphi(y)=0,$$
which implies that $\varphi^t\in T_{u^t}S_c$ and the map is well defined. Clearly it is linear. Taking into account
that, for every $t,s>0$ and $w\in E$,
$$(w^{t})^s=e^{\frac{N}{p}(t+s)}w(e^{t+s}x)=w^{t+s},\ w^{0}=w,$$
the result follows.
\end{proof}

\begin{lemma}\label{lem2.3}
The map $(u, t)\in E\times\mathbb{R}\rightarrow u^t\in E$ is
continuous.
\end{lemma}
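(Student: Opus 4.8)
The plan is to show that the map $(u,t)\mapsto u^t$ is continuous by fixing a point $(u_0,t_0)\in E\times\R$ and considering a sequence $(u_n,t_n)\to(u_0,t_0)$, then proving $u_n^{t_n}\to u_0^{t_0}$ in $E$. Since the composition of the dilation in $t$ is a group action (as recorded in the proof of Lemma~\ref{lem2.2}, where $(w^t)^s=w^{t+s}$), it is natural to split the argument using the triangle inequality into the two elementary pieces: continuity in the $u$-variable for fixed $t$, and continuity in the $t$-variable for fixed $u$. That is, I would estimate
$$\|u_n^{t_n}-u_0^{t_0}\|\leq \|u_n^{t_n}-u_0^{t_n}\|+\|u_0^{t_n}-u_0^{t_0}\|$$
and handle each term separately.

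For the first term, the key observation is that $u\mapsto u^t$ is, for each fixed $t$, a bounded \emph{linear} operator on $E$. Indeed, a direct change of variables $y=e^tx$ shows $|\nabla u^t|_p^p=e^{pt}|\nabla u|_p^p$ and $|u^t|_p^p=|u|_p^p$, so the norm $\|u^t\|$ is controlled by a constant (depending only on $t$, hence uniformly bounded as $t_n\to t_0$) times $\|u\|$. Therefore $\|u_n^{t_n}-u_0^{t_n}\|=\|(u_n-u_0)^{t_n}\|\leq C(t_n)\|u_n-u_0\|\to0$, where $C(t_n)$ stays bounded since $t_n\to t_0$. This disposes of the first term cleanly.

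The second term, continuity in $t$ for a fixed function, is where the main work lies, and it is the step I expect to be the main obstacle. One cannot use linearity here, and the scaling $x\mapsto e^tx$ inside the argument is genuinely nonlinear in $t$. The standard approach is a density argument: the map $t\mapsto u_0^t$ is clearly continuous (indeed smooth) when $u_0$ is a nice test function, say $u_0\in C_c^\infty(\R^N)$, because then one may differentiate under the integral sign or invoke dominated convergence with explicit control of the compactly supported, smooth integrand. For general $u_0\in E$, one approximates $u_0$ by $w\in C_c^\infty(\R^N)$ with $\|u_0-w\|$ small, and then uses the uniform-in-$t$ bound from the previous paragraph to transfer continuity:
$$\|u_0^{t_n}-u_0^{t_0}\|\leq\|u_0^{t_n}-w^{t_n}\|+\|w^{t_n}-w^{t_0}\|+\|w^{t_0}-u_0^{t_0}\|,$$
where the outer two terms are bounded by $C\|u_0-w\|$ (uniformly in $t$ on the bounded range containing all $t_n$) and the middle term tends to zero by the smooth case. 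Combining both estimates yields $u_n^{t_n}\to u_0^{t_0}$, proving continuity. The only real subtlety is ensuring the continuity of the $L^p$-norms of both $u$ and $\nabla u$ under dilation simultaneously; this is handled by the standard fact that translation and dilation act continuously on $L^p(\R^N)$ for $1\le p<\infty$, applied to both $u_0$ and its gradient.
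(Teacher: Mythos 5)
Your proof is correct, but it follows a genuinely different route from the paper's. You exploit the exact scaling identities $|\nabla u^t|_p^p=e^{pt}|\nabla u|_p^p$ and $|u^t|_p^p=|u|_p^p$ together with the linearity of $u\mapsto u^t$ to reduce joint continuity to (a) a uniform operator bound for $t$ in a bounded range and (b) continuity of $t\mapsto w^t$ for $w\in C_0^\infty(\R^N)$, the latter transferred to general $u_0$ by density. The paper instead first proves that $u_n^{t_n}\rightharpoonup u^t$ by testing against $\varphi\in C_0^\infty(\R^N)$ (change of variables plus dominated convergence, then density of $C_0^\infty$ in $E$), separately proves convergence of the norms $\|u_n^{t_n}\|\to\|u^t\|$ using the same scaling identities you use, and finally upgrades weak convergence plus norm convergence to strong convergence via the monotonicity inequality \eqref{eq2.5} for the $p$-Laplacian. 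Your argument is more elementary and self-contained: it avoids the weak-to-strong upgrade entirely, which is the most delicate step in the paper's version (there it relies on \eqref{eq2.5}, essentially a uniform-convexity argument). The only point worth making explicit in your write-up is that the approximating functions $w$ can be taken in $C_0^\infty(\R^N)$ without worrying about radial symmetry, since the dilation preserves the closed subspace $E=W_r^{1,p}(\R^N)$ and continuity of the map on all of $W^{1,p}(\R^N)\times\R$ restricts to $E\times\R$; alternatively one approximates by radial test functions, which are dense in $E$. This is a cosmetic remark, not a gap.
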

\begin{proof} Assume that
$u_n\to u$ in $E$ and ${t_n}\to t$ in $\mathbb{R}$.
Observed, for $\varphi\in C_0^\infty(\mathbb{R}^N)$, that
\begin{equation*}
\begin{split}&\int_{\mathbb{R}^N}|\nabla u_n^{t_n}|^{p-2}\nabla u_n^{t_n}\nabla\varphi
+\int_{\mathbb{R}^N}|u_n^{t_n}|^{p-2} u_n^{t_n}\varphi\\
&=e^{\frac{p^2-p-N}{p}t_n}\int_{\mathbb{R}^N}|\nabla u_n(x)|^{p-2}\nabla u_n(x)(\nabla\varphi)(e^{-t_n}x)\\
&\ \ \ +e^{-\frac{Nt_n}{p}}\int_{\mathbb{R}^N}|u_n(x)|^{p-2} u_n(x)\varphi(e^{-t_n}x).
\end{split}
\end{equation*}
By Lebesgue dominated convergence theorem, we conclude that
\begin{equation*}
\begin{split}&\left|\int_{\mathbb{R}^N}|\nabla u_n(x)|^{p-2}\nabla u_n(x)(\nabla\varphi)(e^{-t_n}x)-
|\nabla u(x)|^{p-2}\nabla u(x)(\nabla\varphi)(e^{-t_n}x)\right|\\
&+\left|\int_{\mathbb{R}^N}|\nabla u(x)|^{p-2}\nabla u(x)(\nabla\varphi)(e^{-t_n}x)-
|\nabla u(x)|^{p-2}\nabla u(x)(\nabla\varphi)(e^{-t_n}x)\right|\\
&\leq
e^{\frac{N}{p}t_n}\left||\nabla u_n|^{p-2}\nabla u_n-|\nabla u|^{p-2}\nabla u\right|_{\frac{p}{p-1}}
|\nabla \varphi|_p\\
&\ \ \ +\left|\int_{\mathbb{R}^N}|\nabla u(x)|^{p-2}\nabla u(x)(\nabla\varphi)(e^{-t_n}x)-
|\nabla u(x)|^{p-2}\nabla u(x)(\nabla\varphi)(e^{-t}x)\right|\\
&\to 0
\end{split}
\end{equation*}
as $n\to\infty$. In the same way, we can show that
$$\int_{\mathbb{R}^N}|u_n(x)|^{p-2} u_n(x)\varphi(e^{-t_n}x) \to\int_{\mathbb{R}^N}|u(x)|^{p-2} u(x)\varphi(e^{-t}x),$$
as $n\to\infty$. As a consequence, for $\psi\in E$,
\begin{equation*}
\begin{split}&\int_{\mathbb{R}^N}|\nabla u_n^{t_n}|^{p-2}\nabla u_n^{t_n}\nabla\psi
+\int_{\mathbb{R}^N}|u_n^{t_n}|^{p-2} u_n^{t_n}\psi\\
&\to\int_{\mathbb{R}^N}|\nabla u^{t}|^{p-2}\nabla u^{t}\nabla\psi
+\int_{\mathbb{R}^N}|u^{t}|^{p-2} u^{t}\psi
\end{split}
\end{equation*}
as $n\to\infty$, by employing the fact that $C_0^\infty(\R^N)$ is dense in $E$.
Now, it is easy to check that
$$\|u_n^{t_n}\|^p=e^{pt_n}|\nabla u_n|_p^p+|u_n|_p^p
\to e^{pt}|\nabla u|_p^p+|u|_p^p=\|u^{t}\|^p,$$
as $n\to\infty$. Using a well known inequality found in \cite[Lemma A.0.5]{ip}, we know that
\begin{equation}\label{eq2.5}(|\eta|^{p-2}\eta-|\xi|^{p-2}\xi)\cdot(\eta-\xi)\geq\left\{\begin{array}{ll}
d_1|\eta-\xi|^p,&{\rm if}\ p\geq 2,\\ d_2(|\xi|+|\eta|)^{p-2}|\xi-\eta|^2,&{\rm if}\ p\in(1,2),
\end{array}\right.\end{equation}
where $d_1,d_2$ are positive constants.
Thereby, we infer from \eqref{eq2.5} that $u_n^{t_n}\to u^t$ in $E$.
The proof is completed.\end{proof}

Finally, we give a version of linking theorem, see \cite[Section 5]{ng}.

\begin{definition}
Let $X$ be a topological space and $B$ be a closed subset of $X$. We shall say that a class ${\mathcal F}$
of compact of subsets of $X$ is a homotopy-stable family with extended boundary $B$ if for any set $A$ in
${\mathcal F}$ and any $\eta\in C([0,1]\times X; X)$ satisfying $\eta(t,x)=x$ for all $(t,x)\in(\{0\}\times X)\cup
([0,1]\times B)$ we have that $\eta(\{1\}\times A)\in{\mathcal F}$.
\end{definition}

\begin{lemma}\cite[Theorem 5.2]{ng}\label{lem2.4}
Let $\phi$ be a $C^1$-functional on a complete connected $C^1$-Finsler manifold $X$ and consider a
homotopy-stable family ${\mathcal F}$ with an extended closed boundary $B$. Set $m=m(\phi,\mathcal F)$
and let $F$ be a closed subset of $X$ satisfying

(1) $(A\cap F)\setminus B\neq\emptyset$ for every $A\in{\mathcal F}$;

(2) $\sup\phi(B)\leq m\leq\inf\phi(F)$.\\
Then, for any sequence of sets $(A_n)_n$ in ${\mathcal F}$ such that $\lim_{n\to\infty}\sup_{A_n}\phi=m$, there
exists a sequence $(x_n)_n$ in $X$ such that
$$\lim_{n\to\infty}\phi(x_n)=m,\ \lim_{n\to\infty}\|d\phi(x_n)\|=0,\ \lim_{n\to\infty}dist(x_n,F)=0,\ \lim_{n\to\infty}dist(x_n,A_n)=0.$$
\end{lemma}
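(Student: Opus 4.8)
This is the abstract minimax principle with a dual set, so to prove it I would argue by contradiction through a localized deformation lemma, using the duality conditions (1)--(2) to convert ``no approximate critical point near $F$'' into a genuine lowering of the minimax value. Write $m=\inf_{A\in\mathcal{F}}\sup_A\phi$ and suppose the conclusion fails. Then there is an $\epsilon>0$ such that, for all large $n$, the gradient stays bounded below,
$$\|d\phi(x)\|\ge\epsilon\quad\text{on}\quad R_n:=\{x\in X:\ |\phi(x)-m|\le\epsilon,\ \mathrm{dist}(x,F)\le\epsilon,\ \mathrm{dist}(x,A_n)\le\epsilon\};$$
otherwise a diagonal extraction over $\epsilon=1/k$ would already yield a sequence $(x_n)$ realizing the four asserted limits. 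Note that by (1)--(2) every $A\in\mathcal{F}$ meets $F$ outside $B$, so $\sup_A\phi\ge\inf_F\phi\ge m$; in particular $\delta_n:=\sup_{A_n}\phi-m\ge 0$ and $\delta_n\to 0$.

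Next I would construct an admissible deformation. On the region where $\|d\phi\|>\epsilon/2$ the $C^1$-Finsler structure furnishes a locally Lipschitz pseudo-gradient field $V$ with $\|V\|\le 1$ and $\langle d\phi,V\rangle\ge\epsilon/2$; I multiply it by a Lipschitz cut-off $\chi$ that equals $1$ on a slightly shrunken copy of $R_n$, vanishes outside $R_n$, and vanishes on a neighborhood of $B$, and I let $\eta:[0,1]\times X\to X$ be the (time-rescaled) flow of $-\chi V$. Then $\eta(0,\cdot)=\mathrm{id}$ and $\eta(s,\cdot)=\mathrm{id}$ on $B$, so $\eta$ meets the hypotheses in the definition of a homotopy-stable family; along the flow $\phi$ is nonincreasing, and it decreases by a definite amount exceeding $\delta_n$ on every point of $A_n$ that sits near $F$ at level close to $m$, so that such points are pushed strictly below level $m$.

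Then I would invoke the dual set to close the loop. By homotopy-stability $\eta(1,A_n)\in\mathcal{F}$, whence by (1) it meets $F$ outside $B$ at some $z=\eta(1,w)$ with $w\in A_n$, and by (2) $\phi(z)\ge m$. But this is impossible: if $w$ lies in $\mathrm{supp}\,\chi$ (i.e.\ near $F$, near $A_n$, at level $\approx m$) it has been driven strictly below $m$, so $\phi(z)<m$; while if $w$ stays outside $\mathrm{supp}\,\chi$ it is left fixed, so $z=w\in F$ would force $w$ itself to lie on $F$ at level $\ge m$, i.e.\ inside the core $\{\chi=1\}$, a contradiction. Hence the gap $\epsilon$ cannot persist and the required Palais--Smale sequence, localized near $F$ and near $A_n$, exists. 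The quantitative minimality that makes this comparison clean in the general, non-compact setting is most cleanly obtained by first applying Ekeland's variational principle to the functional $A\mapsto\sup_A\phi$ on $\mathcal{F}$ equipped with the Hausdorff metric, replacing $A_n$ by an almost-minimal $C_n$ with $\mathrm{dist}_H(C_n,A_n)\to 0$, and then running the above deformation on $C_n$.

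The main obstacle is the design of this deformation, not the bookkeeping around it. One must support the cut-off pseudo-gradient exactly where the lower bound $\|d\phi\|\ge\epsilon$ is available --- near $F$, near level $m$, and near $A_n$ --- while simultaneously fixing $B$ (delicate precisely when $\sup_B\phi=m$, where $B$ and the active level set touch and an additional spatial cut-off separating $B$ from $F$ is needed), keeping $\eta$ an admissible homotopy, and still guaranteeing a drop of $\phi$ below $m$ on the portion of $A_n$ that condition (1) forces back onto $F$ --- so that no point of $\eta(1,A_n)$ can sit on $F$ at level $\ge m$. Because $X$ carries only a Finsler rather than a Hilbert structure, a pseudo-gradient vector field must replace the gradient flow throughout, which is the technical reason the deformation estimates are carried out in terms of $\langle d\phi,V\rangle$ instead of $\|d\phi\|^2$.
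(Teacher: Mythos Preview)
The paper does not give its own proof of this lemma: it is simply quoted as \cite[Theorem 5.2]{ng} and used as a black box, so there is nothing to compare your argument against within the paper itself. Your sketch follows the standard route taken in Ghoussoub's book --- contradiction, localized pseudo-gradient deformation compatible with the homotopy-stable family, and Ekeland's principle on $\mathcal{F}$ with the Hausdorff metric to gain the ``almost-minimal'' set --- and you have correctly flagged the genuine technical points (fixing $B$ when $\sup_B\phi=m$, working with a pseudo-gradient on a Finsler manifold, supporting the cut-off where the gradient lower bound is valid). As a proof outline it is sound; turning it into a full proof would require carrying out the deformation estimates carefully, but the architecture is the right one.
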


\section{Properties of Palais-Smale sequences \label{Section 3}}
For simplicity of notations, we will write $J_{\mu}$,
$P_{\mu}$, $\Psi_u^{\mu}$, $\cM_{c,\mu}$,
$\cM_{c,\mu}^\pm$, $\cM_{c,\mu}^0$, $m(c,\mu)$ and
$m^\ast(c,\mu)$ as $J$, $P$, $\Psi_u$, $\cM$, $\cM^\pm$,
$\cM^0$, $m$ (or $m(c)$) and $m^\ast$ respectively in
Sections 3, 4 and 5.

\begin{lemma}\label{lem3.1}
Let $p<q<p^\ast$ and $c,\ \mu>0$. Assume that $\{u_n\}\subset S_c$
be a Palais-Smale sequence for $J|_{S_c}$ at level $m\in \R$ with
\begin{equation}\label{eq3.1}
P(u_n)\to 0,\ \ \text{as}\ n\to\infty.
\end{equation} Then the sequence $\{u_n\}$ is bounded in $E$.
\end{lemma}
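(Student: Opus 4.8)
The plan is to extract boundedness of $\{u_n\}$ in $E$ from the two pieces of information we have: the energy values $J(u_n)\to m$ and the near-vanishing of the Poho\v{z}aev functional $P(u_n)\to 0$. Since the $L^p$-norm is fixed ($|u_n|_p=c$), controlling $\|u_n\|$ reduces entirely to controlling $|\nabla u_n|_p^p$, so the whole argument is about bounding the gradient term.

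First I would form a suitable linear combination of $J(u_n)$ and $P(u_n)$ that eliminates the troublesome critical term $\int|u_n|^{p^\ast}$ while keeping a positive coefficient in front of $|\nabla u_n|_p^p$. Writing things out,
\begin{equation*}
J(u_n)-\frac{1}{p^\ast}P(u_n)
=\Big(\frac{1}{p}-\frac{1}{p^\ast}\Big)\int_{\R^N}|\nabla u_n|^p
-\mu\Big(\frac{1}{q}-\frac{\gamma_q}{p^\ast}\Big)\int_{\R^N}|u_n|^q.
\end{equation*}
The coefficient $\frac{1}{p}-\frac{1}{p^\ast}=\frac{p}{N\,p'}$-type constant is strictly positive since $p<p^\ast$, so the gradient appears with a good sign and the critical term has dropped out. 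The left-hand side is bounded because $J(u_n)\to m$ and $P(u_n)\to 0$. It remains to absorb the $\int|u_n|^q$ term on the right.

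The key step is to dominate $\int|u_n|^q$ using the Gagliardo–Nirenberg inequality of Lemma \ref{lem1.1}. That gives
\begin{equation*}
\int_{\R^N}|u_n|^q\leq C(q)\,|u_n|_p^{(1-\gamma_q)q}\,|\nabla u_n|_p^{\gamma_q q}
=C(q)\,c^{(1-\gamma_q)q}\,|\nabla u_n|_p^{\gamma_q q},
\end{equation*}
so that, setting $a_n=|\nabla u_n|_p^p$, the previous display yields an inequality of the form $C_1 a_n\leq C_2+C_3\,a_n^{\gamma_q q/p}$ for positive constants $C_1,C_2,C_3$. The decisive point is the exponent: in the regime $p<q<p+p^2/N$ one has $\gamma_q q=\frac{N(q-p)}{p}<p$, hence $\gamma_q q/p<1$, so the right-hand power of $a_n$ grows strictly slower than linearly and the inequality forces $a_n$ to stay bounded. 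This is the main obstacle to watch, and where the hypothesis $q<p^\ast$ (more precisely the subcritical scaling $\gamma_q q<p$) is essential: if $\gamma_q q\geq p$ the sublinear absorption fails and one would need a different argument. Once $\{a_n\}=\{|\nabla u_n|_p^p\}$ is bounded, combining with $|u_n|_p=c$ gives $\|u_n\|^p=|\nabla u_n|_p^p+|u_n|_p^p$ bounded, completing the proof.
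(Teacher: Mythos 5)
Your argument is correct and is exactly the paper's proof in the $L^p$-subcritical range, but it does not prove the lemma as stated. The lemma assumes only $p<q<p^\ast$, which is equivalent to $\gamma_q q<p^\ast$, not to $\gamma_q q<p$; the latter holds precisely when $q<p+p^2/N$. You acknowledge this yourself ("if $\gamma_q q\geq p$ the sublinear absorption fails and one would need a different argument") but then stop, so the whole range $p+p^2/N\leq q<p^\ast$ — which the paper genuinely needs, since Lemma \ref{lem3.1} feeds into Lemma \ref{lem3.3} and hence into Theorems \ref{the1.2} and \ref{the1.3} — is left unproved. That is a real gap, not a cosmetic one: in that range the combination $J-\frac{1}{p^\ast}P$ leaves you with $C_1a_n\leq C_2+C_3a_n^{\gamma_qq/p}$ with exponent $\geq 1$, which gives nothing.

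The missing half is handled in the paper by choosing a different multiple of $P$. Taking $J(u_n)-\frac{1}{p}P(u_n)$ cancels the gradient term entirely and yields
\begin{equation*}
C\geq J(u_n)-\frac{1}{p}P(u_n)=\frac{1}{N}|u_n|_{p^\ast}^{p^\ast}
+\frac{\mu(\gamma_qq-p)}{pq}|u_n|_q^q\geq\frac{1}{N}|u_n|_{p^\ast}^{p^\ast},
\end{equation*}
where the sign of the $|u_n|_q^q$-coefficient is now favorable precisely because $\gamma_q q\geq p$. Hence $|u_n|_{p^\ast}$ is bounded; interpolation $|u_n|_q^q\leq|u_n|_p^{p(1-\tau)}|u_n|_{p^\ast}^{p^\ast\tau}$ with $|u_n|_p=c$ then bounds $|u_n|_q^q$, and finally $P(u_n)=o(1)$ gives $|\nabla u_n|_p^p=|u_n|_{p^\ast}^{p^\ast}+\mu\gamma_q|u_n|_q^q+o(1)$, which is bounded. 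Adding this second case would complete your proof and make it coincide with the paper's.
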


\begin{proof}[\bf Proof]
If $p<q<p+p^2/N$, then $0\le\gamma_q q<p$. The fact that $P(u_n)=o(1)$ and the Gagliardo-Nirenberg inequality lead to
\begin{align*}
C&\geq J(u_n)-\frac{1}{p^\ast}P(u_n)\\
&=\frac{1}{N}|\nabla u_n|_p^p-\frac{\mu}{q}\left(1-\frac{\gamma_q q}{p^\ast}\right)|u_n|_q^q\\
&\geq\frac{1}{N}|\nabla u_n|_p^p
-\frac{\mu}{q}C(q)\left(1-\frac{\gamma_q q}{p^\ast}\right)c^{(1-\gamma_q)q}|\nabla u_n|_p^{\gamma_q q}
\end{align*}
provided that $n$ is sufficiently large, from which we infer that $\{u_n\}$ is bounded in $E$.

If $p+p^2/N\leq q<p^\ast$, then $p\le\gamma_q q<p^\ast$. Using $P(u_n)=o(1)$ again implies
\begin{align*}
C&\geq J(u_n)-\frac{1}{p}P(u_n)\\
&=\frac{1}{N}|u_n|_{p^\ast}^{p^\ast}
+\frac{\mu(\gamma_qq-p)}{pq}|u_n|_q^q\\
&\geq\frac{1}{N}|u_n|_{p^\ast}^{p^\ast}
\end{align*}
for $n$ large.
The H\"{o}lder inequality ensures that $|u_n|_q^q\leq|u_n|_p^{p(1-\tau)}|u_n|_{p^\ast}^{p^\ast\tau}\leq C$
for suitable $\tau\in(0,1)$,
which together with $P(u_n)=o(1)$ yields that
\begin{align*}
|\nabla u_n|_p^p&=|u_n|_{p^\ast}^{p^\ast}+\mu \gamma _q|u_n|_q^q+o(1).
\end{align*}
Hence $\{u_n\}$ is also bounded in $E$.
\end{proof}

In order to show the convergence
a.e. of the gradients, we introduce the following lemma.
\begin{lemma}[\cite{clr}] \label{lem3.2}
Let $\{v_k\}$ be a sequence in $D^{1,p}(\R^N)$  and be such that $v_k\rightharpoonup v$ weakly in
$D^{1,p}(\R^N)$. Assume that, for every $\psi\in C_0^\infty(\R^N)$,
$$\lim_{k\to\infty}\int_{\R^N}\psi\left(|\nabla v_k|^{p-2}\nabla v_k-|\nabla v|^{p-2}\nabla v\right)\cdot\nabla(T(v_k-v))=0,$$
where the truncation function $T:\R\to[0,1]$ is given by
 \begin{equation}\label{eq3.2}T(t)=\left\{\begin{array}{ll}
t,&{\rm if}\ |t|\leq 1,\\ \frac{t}{|t|},&{\rm if}\ |t|\geq1 .
\end{array}\right.\end{equation}
Then, after passing to a subsequence, $\nabla v_k\to \nabla v$ a.e. in $\R^N$.
\end{lemma}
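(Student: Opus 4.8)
The plan is to follow the classical Boccardo--Murat argument, reducing the statement to a pointwise property of the monotone map $\xi\mapsto|\xi|^{p-2}\xi$. First I would set $w_k=v_k-v$, so that $w_k\rightharpoonup 0$ in $D^{1,p}(\R^N)$. Since $\{w_k\}$ is bounded in $D^{1,p}(\R^N)$, it is bounded in $W^{1,p}(B_R)$ for every ball $B_R$, and the compact embedding $W^{1,p}(B_R)\hookrightarrow\hookrightarrow L^p(B_R)$ together with a diagonal argument gives, after passing to a subsequence, $w_k\to 0$ in $L^p_{loc}(\R^N)$ and hence $w_k\to 0$ almost everywhere. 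The observation linking the hypothesis to the truncation is that $\nabla\big(T(w_k)\big)=\chi_{\{|w_k|\le 1\}}\nabla w_k$ almost everywhere (the gradient of a Sobolev function vanishes a.e. on its level sets), so the assumed limit reads
$$\lim_{k\to\infty}\int_{\R^N}\psi\,\chi_{\{|w_k|\le1\}}\big(|\nabla v_k|^{p-2}\nabla v_k-|\nabla v|^{p-2}\nabla v\big)\cdot\nabla w_k=0$$
for every $\psi\in C_0^\infty(\R^N)$.

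Next I would introduce the nonnegative integrand
$$e_k:=\big(|\nabla v_k|^{p-2}\nabla v_k-|\nabla v|^{p-2}\nabla v\big)\cdot\nabla(v_k-v)\ge 0,$$
where the nonnegativity is precisely the monotonicity inequality \eqref{eq2.5}. Its truncated version $e_k\chi_{\{|w_k|\le1\}}$ is again nonnegative, and the displayed hypothesis says $\int_{\R^N}\psi\,e_k\chi_{\{|w_k|\le1\}}\to 0$ for all $\psi\in C_0^\infty(\R^N)$. Choosing nonnegative $\psi$ with $\psi\ge\chi_{B_R}$ then forces $\int_{B_R}e_k\chi_{\{|w_k|\le1\}}\to 0$ on each ball, so a diagonal extraction yields $e_k\chi_{\{|w_k|\le1\}}\to 0$ almost everywhere. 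Because $w_k\to 0$ a.e., for almost every $x$ one has $|w_k(x)|\le 1$ for all large $k$, so the truncation factor equals $1$ eventually and therefore $e_k\to 0$ almost everywhere.

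It remains to upgrade $e_k(x)\to 0$ to $\nabla v_k(x)\to\nabla v(x)$ at almost every $x$, and this is where \eqref{eq2.5} is used a second time and where the only genuine difficulty lies. Fixing such an $x$ and writing $a_k=\nabla v_k(x)$, $b=\nabla v(x)$, the relation becomes $(|a_k|^{p-2}a_k-|b|^{p-2}b)\cdot(a_k-b)\to 0$. When $p\ge 2$ the first branch of \eqref{eq2.5} gives $d_1|a_k-b|^p\le e_k(x)\to 0$ and we conclude immediately. The delicate case is $1<p<2$, where the second branch yields only $d_2(|a_k|+|b|)^{p-2}|a_k-b|^2\le e_k(x)$ and the weight $(|a_k|+|b|)^{p-2}$ degenerates as $|a_k|\to\infty$. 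Here I would argue by contradiction: if $a_k\not\to b$ along a subsequence with $|a_k-b|\ge\delta>0$, then either $\{a_k\}$ stays bounded, in which case $(|a_k|+|b|)^{p-2}$ is bounded below by a positive constant and $e_k(x)$ is bounded away from $0$, a contradiction; or $|a_k|\to\infty$, in which case expanding $(|a_k|^{p-2}a_k-|b|^{p-2}b)\cdot(a_k-b)$ shows the leading term $|a_k|^p$ dominates the cross terms of order $|a_k|^{p-1}$ and $|a_k|$, forcing $e_k(x)\to+\infty$, again a contradiction. Hence $\nabla v_k\to\nabla v$ almost everywhere, which is the claim. The main obstacle throughout is exactly this subquadratic regime $1<p<2$: the absence of a clean coercive lower bound in \eqref{eq2.5} is what makes the case distinction on the boundedness of $\nabla v_k(x)$ necessary.
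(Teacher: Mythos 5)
Your proof is correct. Note, however, that the paper itself offers no proof of this lemma: it is quoted as a black box from Clapp and L\'opez R\'ios \cite{clr}, so there is no internal argument to compare yours against. What you have written is a correct, self-contained reconstruction along the classical Boccardo--Murat lines, which is in substance the argument of \cite{clr}: the chain rule $\nabla(T(w_k))=\chi_{\{|w_k|\le1\}}\nabla w_k$ converts the hypothesis into the statement that the nonnegative truncated integrand $e_k\chi_{\{|w_k|\le1\}}$ tends to $0$ in $L^1_{loc}$ (the sign, coming from monotonicity, is what lets you pass from testing against nonnegative $\psi\ge\chi_{B_R}$ to convergence of the integral over $B_R$); local compactness gives $w_k\to0$ a.e.\ so the truncation indicator is eventually $1$ at a.e.\ point, whence $e_k\to0$ a.e.; and the pointwise inversion of \eqref{eq2.5} finishes the job. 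You correctly isolate the only genuinely delicate point, namely the subquadratic regime $1<p<2$ where the weight $(|a_k|+|b|)^{p-2}$ degenerates, and your dichotomy (bounded $a_k$: the weight is bounded below by $M^{p-2}$; unbounded $a_k$: the expansion $|a_k|^p-O(|a_k|^{p-1})-O(|a_k|)\to+\infty$ since $p>1$) closes it. The nested subsequence extractions (for $w_k\to0$ a.e.\ and for the a.e.\ convergence of $e_k\chi_{\{|w_k|\le1\}}$ on each $B_R$) are compatible with the ``after passing to a subsequence'' in the conclusion, so there is no gap.
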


\begin{lemma}\label{lem3.3}
Let $0\neq m<\frac{1}{N}S^{N/p}$. If $\{u_n\}\subset S_c$ is a Palais-Smale
sequence for $J|_{S_c}$ at the level $m$ and such that \eqref{eq3.1}
holds, then up to a subsequence one of the following alternatives
holds:\\
$(i)$ $u_n\rightharpoonup u$ in $E$ for some $u\neq0$ and
$$J(u)\leq m-\frac{1}{N}S^{N/p};$$
$(ii)$ $u_n\to u$ in $E$ for some $u$, $J(u)=m$
and $u$ solves \eqref{eq1.1} and \eqref{eq1.3} for some $\lambda<0$.
\end{lemma}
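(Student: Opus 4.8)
The plan is to run a concentration--compactness argument: first extract a weak limit $u$, then upgrade to almost-everywhere convergence of the gradients so that $u$ is recognized as a weak solution, and finally use a Brezis--Lieb decomposition together with the Sobolev inequality to obtain the quantized dichotomy between strong convergence and a loss of energy of size exactly $\frac1N S^{N/p}$.

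By Lemma~\ref{lem3.1} the sequence is bounded in $E$, so along a subsequence $u_n\rightharpoonup u$ in $E$, $u_n\to u$ a.e., and---thanks to the compactness of $W_r^{1,p}(\R^N)\hookrightarrow L^q(\R^N)$ for $p<q<p^\ast$---also $u_n\to u$ in $L^q(\R^N)$. The Palais--Smale condition for $J|_{S_c}$ yields Lagrange multipliers $\lambda_n\in\R$ with
$$-\Delta_p u_n-|u_n|^{p^\ast-2}u_n-\mu|u_n|^{q-2}u_n-\lambda_n|u_n|^{p-2}u_n\to0$$
in the dual of $E$; by the principle of symmetric criticality this relation in fact holds in the dual of the full space $W^{1,p}(\R^N)$, so I may test it against arbitrary $\varphi\in W^{1,p}(\R^N)$. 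Pairing with $u_n$ and subtracting $P(u_n)=o(1)$ gives $\lambda_n c^p=-\mu(1-\gamma_q)|u_n|_q^q+o(1)$; since $\gamma_q<1$ and $|u_n|_q\to|u|_q$, the multipliers converge to $\lambda:=-\mu(1-\gamma_q)c^{-p}|u|_q^q\le0$, and $\lambda<0$ precisely when $u\neq0$.

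The central step, which I expect to be the main obstacle, is the almost-everywhere convergence $\nabla u_n\to\nabla u$, obtained by verifying the hypothesis of Lemma~\ref{lem3.2}. Writing $w_n=u_n-u$ and testing the approximate equation against $\psi\,T(w_n)$ with $\psi\in C_0^\infty(\R^N)$, I expand $\nabla\big(\psi T(w_n)\big)=T(w_n)\nabla\psi+\psi\,T'(w_n)\nabla w_n$. Because $T(w_n)\to0$ a.e. with $|T(w_n)|\le1$, dominated convergence kills the lower-order terms, the multiplier term, and the $T(w_n)\nabla\psi$ contribution; the delicate critical term is controlled by pairing the bounded sequence $|u_n|^{p^\ast-2}u_n$ in $L^{(p^\ast)'}$ with $\psi T(w_n)\to0$ in $L^{p^\ast}$. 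What survives is
$$\int_{\R^N}\psi\,T'(w_n)\,|\nabla u_n|^{p-2}\nabla u_n\cdot\nabla w_n\to0,$$
and since $T'(w_n)\to1$ a.e. while $\nabla w_n\rightharpoonup0$ in $L^p$, replacing $|\nabla u_n|^{p-2}\nabla u_n$ by $|\nabla u|^{p-2}\nabla u$ produces a term that also tends to $0$; subtracting yields exactly the assumption of Lemma~\ref{lem3.2}, whence $\nabla u_n\to\nabla u$ a.e. after passing to a subsequence. I then pass to the limit to conclude that $u$ weakly solves \eqref{eq1.1} with the above $\lambda$, so the Poho\v zaev identity forces $P(u)=0$.

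Finally, set $w_n=u_n-u$ and apply the Brezis--Lieb lemma (legitimate now that $\nabla u_n\to\nabla u$ a.e.) to get
$$|\nabla u_n|_p^p=|\nabla u|_p^p+|\nabla w_n|_p^p+o(1),\qquad |u_n|_{p^\ast}^{p^\ast}=|u|_{p^\ast}^{p^\ast}+|w_n|_{p^\ast}^{p^\ast}+o(1),\qquad |w_n|_q\to0.$$
Subtracting $P(u)=0$ from $P(u_n)=o(1)$ gives $|\nabla w_n|_p^p-|w_n|_{p^\ast}^{p^\ast}\to0$; denoting by $\ell$ the common subsequential limit and using $S|w_n|_{p^\ast}^p\le|\nabla w_n|_p^p$, I obtain $\ell=0$ or $\ell\ge S^{N/p}$, while the same splitting for $J$ gives $m=J(u)+\frac1N\ell$. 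If $\ell\ge S^{N/p}$, then $J(u)\le m-\frac1N S^{N/p}<0$, forcing $u\neq0$, which is alternative $(i)$. If $\ell=0$, then $m=J(u)$; since $m\neq0=J(0)$ we must have $u\neq0$, hence $\lambda<0$, and testing the approximate equation once more against $w_n$ yields $\lambda\lim_n|w_n|_p^p=0$, so $|w_n|_p\to0$; together with $|\nabla w_n|_p\to0$ this gives $u_n\to u$ in $E$, $J(u)=m$, and $u$ solves \eqref{eq1.1} and \eqref{eq1.3} with $\lambda<0$, which is alternative $(ii)$.
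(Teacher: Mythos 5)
Your proof is correct and follows essentially the same route as the paper's: boundedness via Lemma~\ref{lem3.1}, identification and convergence of the Lagrange multipliers, a.e.\ convergence of the gradients through Lemma~\ref{lem3.2} with the truncation test function $\psi\,T(u_n-u)$, the Poho\v zaev identity for the weak limit, and the Br\'ezis--Lieb/Sobolev quantization of the defect $\ell\in\{0\}\cup[S^{N/p},\infty)$. The only (harmless) reorganization is that the paper rules out $u=0$ up front by a separate contradiction argument using $P(u_n)\to0$, whereas you deduce $u\neq0$ inside each alternative --- from $J(u)\le m-\frac1N S^{N/p}<0$ in case $(i)$ and from $J(u)=m\neq0$ in case $(ii)$ --- which is equally valid.
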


\begin{proof}[\bf Proof]

We obtain from Lemma \ref{lem3.1} that $\{u_n\}$ is bounded in $E$. Then, going
if necessary to a subsequence, we may assume that $u_n\rightharpoonup u$
in $E$, $u_n\to u$ in $L^q(\R^N)$ for $p<q<p^\ast$ and $u_n(x)\to u(x)$ a.e. $x\in\R^N$.
Since $(J|_{S_c})'(u_n)\to0$, there exists a sequence $\{\lambda_n\}\subset\R$ such that for $\varphi\in E$,
\begin{align}\label{eq3.3}
\int_{\R^N}|\nabla u_n|^{p-2}\nabla u_n\nabla \varphi
&=\int_{\R^N}|u_n|^{p^\ast-2}u_n\varphi+\mu\int_{\R^N}|u_n|^{q-2}u_n\varphi
\notag\\
&\ \ \ +\lambda_n\int_{\R^N}|u_n|^{p-2}u_n\varphi+o(1)\|\varphi\|
\end{align}
as $n\to\infty$.
Choosing $u_n$ as a test function in \eqref{eq3.3},
it is easy to verify that $\{\lambda_n\}$ is bounded. We may therefore assume, up to a subsequence,
that $\lambda_n\to\lambda$ as $n\to\infty$. Then, in view of
\eqref{eq3.1}, \eqref{eq3.3} and $\gamma_q<1$, we find
\begin{align*}
\lambda c^p&=\lim_{n\to\infty}\lambda_n|u_n|_p^p\\
&=\lim_{n\to\infty}\left(|\nabla u_n|_p^p
-|u_n|_{p^\ast}^{p^\ast}-\mu|u_n|_q^q\right)\\
&=\lim_{n\to\infty}\mu(\gamma_q-1)|u_n|_q^q\\
&=\mu(\gamma_q-1)|u|_q^q\leq 0.
\end{align*}
Now, we claim that $u\neq0$. To prove the claim, we argue by contradiction that $u=0$, then combining $u_n\to 0$ in $L^q(\R^N)$ with the fact that $P(u_n)=o(1)$ as $n\to\infty$ implies
\begin{equation}\label{eq3.4}
|\nabla u_n|_p^p-\int_{\mathbb{R}^N}|u_n|^{p^\ast}
=o(1)\end{equation}
as $n\to\infty$.
Going if necessary to a subsequence, we may assume that
$|\nabla u_n|_p^p\to l$, as $n\to\infty$.
On the other hand, the estimate \eqref{eq3.4} and the Sobolev inequality lead to
$$l\leq S^{-\frac{N}{N-p}}l^\frac{N}{N-p}.$$
Thus, it is easy to obtain either $l=0$ or $l\geq S^\frac{N}{p}$.
On the other hand,
$$m=J(u_n)-\frac{1}{p^\ast}P(u_n)+o(1)
=\frac{1}{N}|\nabla u_n|_p^p+o(1)
=\frac{1}{N}l.$$
Now we also obtain either
$m=0$ or $m\geq \frac{1}{N}S^{N/p}$.
This is a contradiction with the assumption $0\neq m<\frac{S^{N/p}}{N}$. Consequently, $u\neq 0$ and henceforth $\lambda<0$.
In the following, we will show that
\begin{equation}\label{eq3.5}\nabla u_n(x)\to \nabla u(x)\ {\text a.e.\ in}\ \R^N.\end{equation}
Actually, similar to \cite{clr}, choose $\psi\in C_0^\infty(\R^N)$, the Egorov's Theorem implies that
for every $\delta>0$ there exists $E_\delta\subset supp(\psi)$ such that $|E_\delta|<\delta$ and
$u_n\to u$ uniformly in $supp(\psi)\setminus E_\delta$. Hence, $|u_n(x)-u(x)|\leq 1$ for all $x\in supp(\psi)\setminus E_\delta$ and $n$ large enough. Then, from \eqref{eq3.2}, we can assert that
 \begin{equation*}
\begin{split}
&\left|\int_{\R^N}\psi|\nabla u|^{p-2}\nabla u\cdot \nabla(T(u_n-u))\right|\\
&\leq\left|\int_{\R^N\setminus E_\delta}\psi|\nabla u|^{p-2}\nabla u\cdot \nabla(u_n-u)\right|
+\left|\int_{\R^N\setminus E_\delta}\psi|\nabla u|^{p-2}\nabla u\cdot \nabla(T(u_n-u))\right|\\
&\leq o(1)+C\delta.
\end{split}
\end{equation*}
Thus, we infer that
\begin{equation}\label{eq3.6}\lim_{n\to\infty}\int_{\R^N}\psi|\nabla u|^{p-2}\nabla u\cdot \nabla(T(u_n-u))=0.
\end{equation}
On the other hand, according to H\"{o}lder's inequality and the dominated convergence theorem, we can demonstrate that
 \begin{equation*}
\begin{split}
&\left|\int_{\R^N}\psi|\nabla u_n|^{p-2}\nabla u_n\cdot \nabla(T(u_n-u))\right|\\
&\leq\left|\int_{\R^N}|\nabla u_n|^{p-2}\nabla u_n\cdot \nabla(\psi T(u_n-u))\right|
+\left|\int_{\R^N}|\nabla u_n|^{p-2}\nabla u_n\cdot(T(u_n-u))\nabla\psi\right|\\
&\leq\left|\int_{\R^N}|u_n|^{p^\ast-2}u_n(\psi T(u_n-u))\right|+\mu\left|\int_{\R^N}|u_n|^{q-2}u_n(\psi T(u_n-u))\right|
\notag\\
&\ \ \ +\left|\int_{\R^N}\lambda_n|u_n|^{p-2}u_n(\psi T(u_n-u))\right|+\left|\int_{\R^N}|\nabla u_n|^{p-2}\nabla u_n\cdot(T(u_n-u))\nabla\psi\right|+o(1)\\
&\leq C\left(\int_{\R^N}|\psi T(u_n-u)|^{p^\ast}\right)^{\frac{1}{p^\ast}}+ C\left(\int_{\R^N}|\psi T(u_n-u)|^{q}\right)^{\frac{1}{q}}\\
\notag
&\ \ \ +C\left(\int_{\R^N}|\psi T(u_n-u)|^{p}\right)^{\frac{1}{p}}+C\left(\int_{\R^N}|T(u_n-u)\nabla\psi |^{p}\right)^{\frac{1}{p}}+o(1)\\
&= o(1).
\end{split}
\end{equation*}
This means that
$$\lim_{n\to\infty}\left|\int_{\R^N}\psi|\nabla u_n|^{p-2}\nabla u_n\cdot \nabla(T(u_n-u))\right|=0,$$
which together with \eqref{eq3.6} and Lemma \ref{lem3.2} implies that \eqref{eq3.5} holds.
Letting $n\to\infty$ in \eqref{eq3.3}, we have
\begin{equation}\label{eq3.7}
\int_{\R^N}|\nabla u|^{p-2}\nabla u\nabla \varphi
=\int_{\R^N}|u|^{p^\ast-2}u\varphi+\mu\int_{\R^N}|u|^{q-2}u\varphi
+\lambda\int_{\R^N}|u|^{p-2}u\varphi,
\end{equation}
which means that $u$ is a weak solution of the semilinear equation
$$-\Delta_p u=\lambda |u|^{p-2}u+\mu|u|^{q-2}u+|u|^{p^\ast-2}u\ \
\text{in}\ \R^N.$$
Thanks to \cite[Proposition 2.1]{ll11}, we infer that $u$ satisfies the following Poho\v zaev
identity
\begin{equation}\label{eq3.8}
\int_{\R^N}|\nabla u|^p
-\int_{\R^N}|u|^{p^\ast}-\mu \gamma_q \int_{\R^N}|u|^q=0.
\end{equation}
Let $v_n=u_n-u$ and then the Br\'{e}zis-Lieb Lemma \cite[Lemma 1.32]{w} leads to
$$|\nabla u_n|_p^p=|\nabla v_n|_p^p+|\nabla u|_p^p+o(1), \ |u_n|_{p^\ast}^{p^\ast}=|v_n|_{p^\ast}^{p^\ast}+|u|_{p^\ast}^{p^\ast}+o(1)$$
and
$$|u_n|_q^q=|v_n|_q^q+|u|_q^q+o(1),$$
from which it is reduced to
\begin{equation*}
\begin{split}
P(u_n)&=|\nabla u_n|_p^p
-|u_n|_{p^\ast}^{p^\ast}-\mu \gamma _q|u_n|_q^q\\
&=|\nabla v_n|_p^p-|v_n|_{p^\ast}^{p^\ast}\\
&\quad\,+|\nabla u|_p^p
-|u|_{p^\ast}^{p^\ast}-\mu \gamma _q|u|_q^q+o(1).
\end{split}
\end{equation*}
Combining this with \eqref{eq3.1}, \eqref{eq3.8} guarantees that
\begin{equation}\label{eq3.9}
|\nabla v_n|_p^p
=|v_n|_{p^\ast}^{p^\ast}+o(1)\leq S^{-\frac{p^\ast}{p}}|\nabla v_n|_p^{p^\ast}+o(1).
\end{equation}
Suppose that $\lim_{n\to\infty}|\nabla v_n|_p^p=l_1$. It is easy to check from \eqref{eq3.9} that
$$l_1\leq S^{-\frac{N}{N-p}}l_1^\frac{N}{N-p}.$$
Similarly, either $l_1=0$ or $l_1\geq S^\frac{N}{p}$.
Then, we consider the two cases which is described as follows.

{\bf Case 1.} $l_1\geq S^\frac{N}{p}$. In this
case, we can easily check that
\begin{align*}
m&=J(u_n)+o(1)\\
&=J(u)+\frac{1}{p}|\nabla v_n|_p^p
-\frac{1}{p^\ast}|v_n|_{p^\ast}^{p^\ast}+o(1)\\
&=J(u)+\frac{1}{N}|\nabla v_n|_p^p+o(1)\\
&\geq J(u)+\frac{1}{N}S^\frac{N}{p},
\end{align*}
where \eqref{eq3.9} is used.

{\bf Case 2.} $l=0$. In this case, it is easy to see $u_n\to u$ in
$D^{1, p}(\R^N)$ and hence in $L^{p^\ast}(\R^N)$ by the Sobolev
inequality. We deduce, by taking $u_n-u$ as a test function in \eqref{eq3.3}
and \eqref{eq3.7} respectively and subtracting them, that
\begin{align*}
&\int_{\R^N}(|\nabla u_n|^{p-2}\nabla u_n-|\nabla u|^{p-2}\nabla u)\cdot\nabla(u_n-u)
=\int_{\R^N}(|u_n|^{p^\ast-2}u_n-|u|^{p^\ast-2}u)(u_n-u)\\
&\ \ \ \quad\,+\mu\int_{\R^N}(|u_n|^{q-2}u_n-|u|^{q-2}u)(u_n-u)
+\int_{\R^N}(\lambda_n|u_n|^{p-2}u_n-\lambda |u|^{p-2}u)(u_n-u)\\
&\ \ \ \quad\,
+o(1)\|u_n-u\|,
\end{align*}
which ensures that
\begin{align}\label{eq3.10}0&=\lim_{n\to\infty}\int_{\R^N}(\lambda_n|u_n|^{p-2}u_n-\lambda |u|^{p-2}u)(u_n-u)\notag\\
&=\lim_{n\to\infty}\lambda\int_{\R^N}(|u_n|^{p-2}u_n- |u|^{p-2}u)(u_n-u).\end{align}
For $1<p<2$, we deduce from \eqref{eq3.10} and \eqref{eq2.5} that
\begin{align*}
\left(\int_{\R^N}|u_n-u|^p\right)^{\frac{2}{p}}
\notag&\leq\left(\int_{\R^N}\frac{|u_n-u|^2}{(|u_n|+|u|)^{2-p}}\right)
\left(\int_{\R^N}(|u_n|+|u|)^{p}\right)^{\frac{2-p}{p}}\\
&\leq C\int_{\R^N}(|u_n|^{p-2}u_n- |u|^{p-2}u)(u_n-u)
\to 0.
\end{align*}
Similarly, we can prove the same convergence property for the case $p\geq 2$.
The above limits lead to $u_n\to u$ in $L^p(\R^N)$ and then in $E$.
Hence, the alternative $(ii)$ is valid.
\end{proof}

\section{$L^p$-subcritical perturbation}
This section is devoted to the proof of Theorem \ref{the1.1}.
We always assume that
$$p<q<p+p^2/N,\ \ c,\ \mu>0,\ \mu c^{(1-\gamma_q)q}<\alpha(q).$$

\subsection{Ground state solution}
Let $p<q<p+p^2/N$, which implies that $0<\gamma_qq<p$. Furthermore, taking into account Lemma
\ref{lem1.1}, we see that
\begin{align}\label{eq4.1}
J(u)&=\frac{1}{p} |\nabla u|_p^p
-\frac{1}{p^\ast}|u|_{p^\ast}^{p^\ast}-\frac{\mu}{q}|u|_q^q\notag\\
&\geq\frac{1}{p} |\nabla u|_p^p
-\frac{S^{-\frac{p^\ast}{p}}}{p^\ast}|\nabla u|_p^{p^\ast}
-\frac{\mu}{q}C(q)c^{(1-\gamma_q)q}|\nabla u|_p^{\gamma_q q}
\end{align}
for $u\in S_c$. In what follows, we study the properties of the fiber
map $\Psi_u^\mu$,
which need the following technical lemma.

\begin{lemma}\label{lem4.1}
The function
$$h(t)=\frac{1}{p}t^p-\frac{S^{-\frac{p^\ast}{p}}}{p^\ast}t^{p^\ast}
-\frac{\mu}{q}C(q)c^{(1-\gamma_q)q}t^{\gamma_qq},\ \ t>0$$
has a strict local minimum with negative value and a strict
global maximum with positive value. Furthermore, there exist two
positive numbers $R_0<R_1$ depending on $c$ and $\mu$ such that
$h(R_0)=h(R_1)=0$ and $h(t)>0$ if and only if $t\in(R_0,R_1)$.
\end{lemma}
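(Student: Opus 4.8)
The plan is to study $h(t)$ by examining the competition between its three power terms. The key observation is that since $p < q < p + p^2/N$, we have $0 < \gamma_q q < p < p^\ast$, so the three exponents are strictly ordered: $\gamma_q q < p < p^\ast$. This ordering is the structural backbone of the argument. Near $t = 0^+$, the dominant term is the lowest power $-\frac{\mu}{q}C(q)c^{(1-\gamma_q)q}t^{\gamma_q q}$, which is negative, so $h(t) \to 0^-$ and $h$ starts out negative. For large $t$, the dominant term is the highest power $-\frac{S^{-p^\ast/p}}{p^\ast}t^{p^\ast}$, so $h(t) \to -\infty$. Since $h$ is negative at both ends but $C^1$ on $(0,\infty)$, I expect an intermediate region where $h$ can become positive, giving the local-min/global-max structure.

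**Reducing to a single-variable sign analysis:**

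First I would factor out the smallest power by writing $h(t) = t^{\gamma_q q}\,g(t)$, where
$$g(t) = \frac{1}{p}t^{p - \gamma_q q} - \frac{S^{-p^\ast/p}}{p^\ast}t^{p^\ast - \gamma_q q} - \frac{\mu}{q}C(q)c^{(1-\gamma_q)q}.$$
Since $t^{\gamma_q q} > 0$ for $t > 0$, the sign of $h$ equals the sign of $g$, and the positive zeros of $h$ are exactly the positive zeros of $g$. Now I would analyze $g$: the function $\varphi(t) := \frac{1}{p}t^{p-\gamma_q q} - \frac{S^{-p^\ast/p}}{p^\ast}t^{p^\ast - \gamma_q q}$ satisfies $\varphi(0) = 0$, $\varphi(t) \to -\infty$, and (differentiating) has a unique critical point $t_{\max} > 0$ which is a global maximum, because $\varphi'(t) = \frac{p-\gamma_q q}{p}t^{p - \gamma_q q - 1} - \frac{(p^\ast - \gamma_q q)S^{-p^\ast/p}}{p^\ast}t^{p^\ast - \gamma_q q - 1}$ has exactly one positive root (the two power terms in $\varphi'$ cross once since $p - \gamma_q q - 1 < p^\ast - \gamma_q q - 1$). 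Thus $g(t) = \varphi(t) - \frac{\mu}{q}C(q)c^{(1-\gamma_q)q}$ is a downward shift of $\varphi$ by the constant $\frac{\mu}{q}C(q)c^{(1-\gamma_q)q}$, so $g$ has a unique maximum at $t_{\max}$ with value $\varphi(t_{\max}) - \frac{\mu}{q}C(q)c^{(1-\gamma_q)q}$.

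**Where the hypothesis $\mu c^{(1-\gamma_q)q} < \alpha(q) = C'$ enters:**

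The crux — and the step I expect to be the main obstacle — is showing that the constant $C'$ is exactly the threshold making $\max_t g = \varphi(t_{\max}) - \frac{\mu}{q}C(q)c^{(1-\gamma_q)q} > 0$. I would compute $\varphi(t_{\max})$ explicitly: solving $\varphi'(t_{\max}) = 0$ gives $t_{\max}^{p^\ast - p} = \frac{p^\ast(p - \gamma_q q)S^{p^\ast/p}}{p(p^\ast - \gamma_q q)}$, and substituting back yields $\varphi(t_{\max}) = \left(\frac{1}{p} - \frac{1}{p^\ast}\cdot\frac{p-\gamma_q q}{p^\ast - \gamma_q q}\right)t_{\max}^{p - \gamma_q q} = \frac{p^\ast - p}{p\,p^\ast}\cdot\frac{p^\ast - \gamma_q q}{p^\ast - \gamma_q q}\cdots$, which after simplification must match $\frac{q}{p\,C(q)c^{(1-\gamma_q)q}}\,C'$. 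Matching $\varphi(t_{\max}) > \frac{\mu}{q}C(q)c^{(1-\gamma_q)q}$ against the definition of $C'$ is precisely the condition $\mu c^{(1-\gamma_q)q} < C' = \alpha(q)$. Once $\max g > 0$ is established: since $g$ is continuous with $g(0^+) < 0$, rises to a positive maximum, then decreases to $-\infty$, it has exactly two positive zeros $R_0 < R_1$ (by strict monotonicity on each side of $t_{\max}$), and $g > 0$ precisely on $(R_0, R_1)$. Translating back via $h = t^{\gamma_q q}g$ gives $h(R_0) = h(R_1) = 0$ and $h > 0$ iff $t \in (R_0, R_1)$. Finally, the local-minimum-with-negative-value claim follows because $h$ is negative and tending to $0^-$ near the origin while becoming positive on $(R_0, R_1)$, forcing an interior strict local minimum in $(0, R_0)$ with $h < 0$ there; and the strict global maximum with positive value sits in $(R_0, R_1)$, where I would identify it via $h'$ having the appropriate sign changes (equivalently, using the single critical point of $g$ plus the $t^{\gamma_q q}$ weight, one checks $h$ has exactly the critical-point pattern min–max on $(0,\infty)$). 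I would finish by confirming that the maximum value is positive since it lies strictly inside $(R_0,R_1)$ where $h > 0$.
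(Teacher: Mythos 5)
Your proposal is correct and follows essentially the same route as the paper: factor out $t^{\gamma_q q}$, reduce to the auxiliary two-powers-minus-constant function, locate its unique maximum at $t^\ast=\bigl(\tfrac{p^\ast(p-\gamma_q q)S^{p^\ast/p}}{p(p^\ast-\gamma_q q)}\bigr)^{1/(p^\ast-p)}$, check that the hypothesis $\mu c^{(1-\gamma_q)q}<C'$ is exactly the positivity of the maximum, and count zeros and critical points via the one-critical-point structure of the factored functions. The only blemish is the unfinished algebra where you write $\frac{1}{p}-\frac{1}{p^\ast}\cdot\frac{p-\gamma_q q}{p^\ast-\gamma_q q}$ (it should simplify to $\frac{p^\ast-p}{p(p^\ast-\gamma_q q)}$), but you flag this as incomplete and it matches the paper's computation once carried out.
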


\begin{proof}[\bf Proof]
We first consider the auxiliary function $\varphi: \R^+\to\R$ given by
$$\varphi(t):=\varphi_c(t)=\frac{1}{p}t^{p-\gamma_q q}
-\frac{S^{-\frac{p^\ast}{p}}}{p^\ast}t^{p^\ast-\gamma_q q}
-\frac{\mu}{q}C(q)c^{(1-\gamma_q)q}.$$
By carrying out a straightforward computation, we conclude that $\varphi$ achieves its maximum at
$$t^\ast=\Big(\frac{p^\ast(p-\gamma_q q)S^{\frac{p^\ast}{p}}}
{p(p^\ast-\gamma_q q)}\Big)^{\frac{1}{p^\ast-p}},$$
and the maximum value of $\varphi$ is
\begin{align*}\varphi(t^\ast)&=\left(\frac{p^\ast(p-\gamma_q q)S^{\frac{p^\ast}{p}}}
{p(p^\ast-\gamma_q q)}\right)^{\frac{p-\gamma_q q}{p^\ast-p}}\frac{p^\ast-p}{p(p^\ast-\gamma_q q)}-\frac{\mu}{q}C(q)c^{(1-\gamma_q)q}.
\end{align*}
Thus, recalling that
$$\mu c^{(1-\gamma_q)q}<\alpha(q)=\left(\frac{p^\ast(p-\gamma_q q)S^{\frac{p^\ast}{p}}}
{p(p^\ast-\gamma_q q)}\right)^{\frac{p-\gamma_q q}{p^\ast-p}}\frac{(p^\ast-p)q}{C(q)p(p^\ast-\gamma_q q)},$$
we conclude that $h(t^\ast)=(t^\ast)^{\gamma_qq}\varphi(t^\ast)>0$. This, together with
$h(t)\to0^-$ as $t\to 0^+$ and $h(t)\to-\infty$ as $t\to +\infty$
implies that $h$ has at least two zero points $0<R_0<R_1$.
Clearly $\varphi$
has only one critical point.
Notice in particular that, since $h(t)=0$ is equivalent to $\varphi(t)=0$, $h$ has exactly two zero
points $0<R_0<R_1$ and $h(t)>0$ if and only if $t\in(R_0,R_1)$.

Consequently, $h$ achieves
its global maximum with positive value in $(R_0,R_1)$ and $h$
achieves a local minimum with negative value in $(0,R_0)$. Observe
that $h'(t)=0$ is equivalent to
$$\psi(t)=t^{p-\gamma_q q}
-S^{-\frac{p^\ast}{p}}t^{p^\ast-\gamma_q q}-\mu\gamma_qC(q)c^{(1-\gamma_q)q}=0.$$
We deduce from $\psi$ has only one critical point in $(0,+\infty)$ that $h$ has exactly two critical points, which are the
strict local minimum point and the global maximum point previously
found.
\end{proof}

\begin{lemma}\label{lem4.2}
$\cM^0=\emptyset$.
\end{lemma}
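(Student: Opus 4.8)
The plan is to argue by contradiction and reduce the two defining conditions of $\cM^0$ to algebraic relations between $A:=|\nabla u|_p^p$, $B:=|u|_{p^\ast}^{p^\ast}$ and $D:=|u|_q^q$, and then to pit the Sobolev and Gagliardo--Nirenberg inequalities against the smallness assumption \eqref{eq1.6}. From the explicit form of the fiber map I would first record
\[(\Psi_u)'(t)=Ae^{pt}-Be^{p^\ast t}-\mu\gamma_q D\,e^{\gamma_q q t},\qquad (\Psi_u)''(t)=pAe^{pt}-p^\ast Be^{p^\ast t}-\mu\gamma_q^2 q D\,e^{\gamma_q q t},\]
so that $(\Psi_u)'(0)=P(u)$ and, for $u\in S_c$, membership in $\cM^0$ is exactly the system $A-B-\mu\gamma_q D=0$ and $pA-p^\ast B-\mu\gamma_q^2 qD=0$. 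Note that $A,B,D>0$ because $u\in S_c$ is nontrivial.

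Substituting $\mu\gamma_q D=A-B$ from the first equation into the second eliminates $D$ and gives the two clean identities
\[B=\frac{p-\gamma_q q}{p^\ast-\gamma_q q}\,A,\qquad \mu\gamma_q D=\frac{p^\ast-p}{p^\ast-\gamma_q q}\,A.\]
Here the standing assumption $p<q<p+p^2/N$, i.e. $0<\gamma_q q<p<p^\ast$, is crucial: it makes both coefficients strictly positive, so that the two inequalities I now apply push in compatible directions. Inserting the first identity into the Sobolev inequality $B\le S^{-p^\ast/p}A^{p^\ast/p}$ produces the lower bound $A^{(p^\ast-p)/p}\ge S^{p^\ast/p}\frac{p-\gamma_q q}{p^\ast-\gamma_q q}$, while combining the second identity with Lemma \ref{lem1.1}, $D\le C(q)c^{(1-\gamma_q)q}A^{\gamma_q q/p}$, yields the upper bound $A^{(p-\gamma_q q)/p}\le \frac{\gamma_q(p^\ast-\gamma_q q)}{p^\ast-p}\,\mu C(q)c^{(1-\gamma_q)q}$.

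Raising the Sobolev bound to the power $\frac{p-\gamma_q q}{p^\ast-p}$ and comparing it with the Gagliardo--Nirenberg bound eliminates $A$ and leaves
\[\mu c^{(1-\gamma_q)q}\ \ge\ \frac{p^\ast-p}{\gamma_q(p^\ast-\gamma_q q)C(q)}\left(S^{p^\ast/p}\,\frac{p-\gamma_q q}{p^\ast-\gamma_q q}\right)^{\frac{p-\gamma_q q}{p^\ast-p}}=:\Lambda.\]
The proof closes once $\Lambda\ge\alpha(q)=C'$, which contradicts \eqref{eq1.6}. I expect this final comparison of constants to be the only genuine obstacle: after cancelling the common factor $\big(S^{p^\ast/p}\frac{p-\gamma_q q}{p^\ast-\gamma_q q}\big)^{(p-\gamma_q q)/(p^\ast-p)}$, the inequality $\Lambda\ge C'$ reduces to the elementary estimate $\frac{p}{\gamma_q q}\big(\frac{p}{p^\ast}\big)^{(p-\gamma_q q)/(p^\ast-p)}\ge 1$, which for $0<\gamma_q q<p<p^\ast$ follows from $\ln s\le s-1$ together with a short monotonicity check; everything else is bookkeeping. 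A more conceptual variant I could run instead is to observe that $t\mapsto(\Psi_u)'(t)e^{-\gamma_q q t}$ is first strictly increasing and then strictly decreasing, hence has a unique maximum; the same Sobolev/Gagliardo--Nirenberg computation shows this maximum is strictly positive under \eqref{eq1.6}, which forces every zero of $(\Psi_u)'$ to be nondegenerate and therefore $\cM^0=\emptyset$.
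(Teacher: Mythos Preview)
Your argument is correct and follows essentially the same route as the paper: derive the two linear relations from $(\Psi_u)'(0)=(\Psi_u)''(0)=0$, insert Sobolev to get a lower bound on $A^{(p^\ast-p)/p}$ and Gagliardo--Nirenberg to get an upper bound on $A^{(p-\gamma_qq)/p}$, and then compare constants to contradict \eqref{eq1.6}. The only point where you are slightly more explicit than the paper is the final step $\Lambda\ge C'$: the paper simply asserts the equivalent inequality $\frac{p}{\gamma_qq}\cdot\big(\frac{p^\ast}{p}\big)^{-(p-\gamma_qq)/(p^\ast-p)}>1$, whereas you note it reduces to $\frac{\ln p-\ln(\gamma_qq)}{p-\gamma_qq}\ge\frac{\ln p^\ast-\ln p}{p^\ast-p}$, i.e.\ concavity of the logarithm, which is a clean justification.
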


\begin{proof}[\bf Proof]
Suppose the thesis is false. Thus, there exists
$u\in\cM^0$ such that

\begin{equation}\label{eq4.2}
|\nabla u|_p^p-|u|_{p^\ast}^{p^\ast}-\mu \gamma _q|u|_q^q=0
\end{equation}
and
\begin{equation}\label{eq4.3}
p|\nabla u|_p^p
-p^\ast|u|_{p^\ast}^{p^\ast}-\mu \gamma_q^2 q|u|_q^q=0.
\end{equation}
Combined \eqref{eq4.2} with \eqref{eq4.3}, we deduce that
\begin{align*}(p-\gamma_q q)|\nabla u|_p^p
&=(p^\ast-\gamma_q q)|u|_{p^\ast}^{p^\ast}\\
&\leq (p^\ast-\gamma_q q)S^{-\frac{p^\ast}{p}}|\nabla u|_p^{p^\ast},
\end{align*}
and a simple computation shows that
$$|\nabla u|_p^{p-\gamma_q q}
\geq\left(\frac{S^{\frac{p^\ast}{p}}(p-\gamma_q q)}{p^\ast-\gamma_q q}\right)^{\frac{p-\gamma_q q}{p^\ast-p}}.$$
The Lemma \ref{lem1.1} combined with \eqref{eq4.2}$-$\eqref{eq4.3} infers that
\begin{align*}
(p^\ast-p)|\nabla u|_p^p
=\mu \gamma_q(p^\ast-\gamma_q q)|u|_q^q
\leq(p^\ast-\gamma_q q)\gamma_q C(q)\mu c^{(1-\gamma_q)q}|\nabla u|_p^{\gamma_q q},
\end{align*}
which therefore implies that
$$|\nabla u|_p^{p-\gamma_q q}
\leq \frac{(p^\ast-\gamma_q q)\gamma_q C(q)\mu c^{(1-\gamma_q)q}}{p^\ast-p}.$$
Hence, recalling that $0<\gamma_qq<p$, we conclude that
$$\mu c^{(1-\gamma_q)q}
\geq\left(\frac{S^{\frac{p^\ast}{p}}(p-\gamma_q q)}{p^\ast-\gamma_q q}\right)^{\frac{p-\gamma_q q}{p^\ast-p}}
\frac{p^\ast-p}{\gamma_q C(q)(p^\ast-\gamma_q q)}=C'\cdot\frac{p}{\gamma_q q}\cdot\left(\frac{p^\ast}{p}\right)^{-\frac{p-\gamma_q q}{p^\ast-p}}>C',$$
which is a contradiction with the assumption
$\mu c^{(1-\gamma_q)q}<C'$.
\end{proof}

\begin{lemma}\label{lem4.3}
For every $u\in S_c $, the function $\Psi_u$ has exactly two
critical points $s_u,t_u$ and two zeros $c_u,d_u$ such that
$s_u<c_u<t_u<d_u$. Moreover, we have\\
$(i)$ $u^{s_u}\in \cM^+$ and $u^{t_u}\in \cM^-$;\\
$(ii)$ $|\nabla u^s|_p< R_0$ for $s< c_u$ and
\begin{equation}\label{eq4.4}
J(u^{s_u})
=\min\left\{J(u^s) : s\in \R,\ |\nabla u^s|_p<R_0\right\}
<0,
\end{equation}
where $R_0$ is given in Lemma \ref{lem4.1};\\
$(iii)$ there holds
\begin{equation}\label{eq4.5}
J(u^{t_u})=\max\left\{J(u^s): s\in \R\right\}>0
\end{equation}
and $\Psi_u$ is decreasing on $(t_u,\infty)$, henceforth if
$t_u<0$ then $P(u)<0$;\\
$(iv)$ the maps $u\in S_c\mapsto s_u\in\R$ and
$u\in S_c\mapsto t_u\in\R$ are of class $C^1$.
\end{lemma}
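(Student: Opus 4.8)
The plan is to change variables $w=e^{t}$ and reduce the whole statement to one–variable calculus. Writing $A=|\nabla u|_p^p$, $B=|u|_{p^\ast}^{p^\ast}$, $D=|u|_q^q$ (all strictly positive), one has
$$\Psi_u(t)=G(e^{t}),\qquad G(w)=\frac{A}{p}w^{p}-\frac{B}{p^\ast}w^{p^\ast}-\frac{\mu D}{q}w^{\gamma_q q},$$
so that critical points (resp.\ zeros) of $\Psi_u$ correspond to critical points (resp.\ zeros) of $G$ on $(0,\infty)$, while $|\nabla u^{t}|_p=e^{t}|\nabla u|_p$ is a strictly increasing bijection of $\R$ onto $(0,\infty)$. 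First I would compute $G'(w)=w^{\gamma_q q-1}\psi(w)$ with $\psi(w)=A\,w^{p-\gamma_q q}-B\,w^{p^\ast-\gamma_q q}-\mu\gamma_q D$ and note, exactly as for the function $\psi$ in Lemma \ref{lem4.1}, that $\psi'$ vanishes at a single point; since $\psi(0^{+})=-\mu\gamma_q D<0$ and $\psi(+\infty)=-\infty$, the function $\psi$ has a unique maximum and hence at most two zeros.

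To see that it has exactly two zeros I would avoid any direct estimate and instead use the lower bound \eqref{eq4.1}, $J(v)\ge h(|\nabla v|_p)$ for $v\in S_c$. Applying it along the orbit $v=u^{t}$ and taking the supremum gives $\sup_t J(u^{t})\ge\sup_{\tau>0}h(\tau)>0$ by Lemma \ref{lem4.1}, hence $\sup_w G(w)>0$. Because $G(w)\to0^{-}$ as $w\to0^{+}$ (the lowest power $\gamma_q q$ carries a negative coefficient) and $G(w)\to-\infty$ as $w\to+\infty$, if $\psi$ were nowhere positive then $G$ would be non-increasing and everywhere negative, contradicting $\sup_w G>0$. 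Thus $\psi$ attains a positive maximum and has exactly two zeros $w_1<w_2$, $G$ is decreasing on $(0,w_1)$, increasing on $(w_1,w_2)$, decreasing on $(w_2,\infty)$, and $s_u:=\ln w_1$, $t_u:=\ln w_2$ are the only critical points of $\Psi_u$. Differentiating the relation $\Psi_u(t)=G(e^{t})$ shows $\Psi_u''(s_u)=w_1^{2}G''(w_1)>0$ and $\Psi_u''(t_u)=w_2^{2}G''(w_2)<0$ (both zeros of $\psi$ being simple); via $\Psi_{u^{s_u}}(t)=\Psi_u(s_u+t)$ these translate into $u^{s_u}\in\cM^{+}$ and $u^{t_u}\in\cM^{-}$, proving (i). For the zeros, $G(w_1)<0$ (as $G\to0^{-}$ and decreases to $w_1$) while $G(w_2)>0$ is the global maximum, so $G$ has exactly one zero $w_c\in(w_1,w_2)$, one zero $w_d\in(w_2,\infty)$, and none on $(0,w_1)$; setting $c_u=\ln w_c$, $d_u=\ln w_d$ yields $s_u<c_u<t_u<d_u$.

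For (ii) I would introduce $s^{\ast}$ by $|\nabla u^{s^{\ast}}|_p=R_0$, i.e.\ $w^{\ast}:=e^{s^{\ast}}=R_0/|\nabla u|_p$. Then \eqref{eq4.1} gives $J(u^{s^{\ast}})\ge h(R_0)=0$, so $G(w^{\ast})\ge0$ and therefore $w^{\ast}\in[w_c,w_d]$ by the sign analysis of $G$; in particular $w_c\le w^{\ast}$, whence $c_u\le s^{\ast}$ and, by monotonicity of $s\mapsto|\nabla u^{s}|_p$, $|\nabla u^{s}|_p<R_0$ for all $s<c_u$. Moreover $\{s:|\nabla u^{s}|_p<R_0\}=(-\infty,s^{\ast})$ contains the interior point $s_u$, and since $G\to0^{-}$ at the left end, $G(w^{\ast})\ge0$ at the right end, and $G(w_1)<0$ is the only local minimum, the minimum of $J(u^{s})$ over this set is attained exactly at $s_u$ with negative value, which is \eqref{eq4.4}. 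Part (iii) is then immediate: $t_u$ realizes the global maximum of $G$, which is positive by the argument above, $\Psi_u$ decreases on $(t_u,\infty)$, and $P(u)=\Psi_u'(0)$; if $t_u<0$ then $0\in(t_u,\infty)$ forces $\Psi_u'(0)<0$, i.e.\ $P(u)<0$.

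Finally, for (iv) I would apply the implicit function theorem to $F(u,t):=\Psi_u'(t)=|\nabla u|_p^{p}e^{pt}-|u|_{p^\ast}^{p^\ast}e^{p^\ast t}-\mu\gamma_q|u|_q^{q}e^{\gamma_q q t}$, which is $C^{1}$ in $(u,t)$ because $u\mapsto(|\nabla u|_p^{p},|u|_{p^\ast}^{p^\ast},|u|_q^{q})$ is $C^{1}$ on $E$; at $(u,s_u)$ and $(u,t_u)$ the partial derivative $\partial_t F=\Psi_u''$ is nonzero (positive, respectively negative), so the zeros $s_u$ and $t_u$ vary in a $C^{1}$ fashion with $u$. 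I expect the main obstacle to be the bookkeeping in part (ii), namely correctly transferring the threshold $R_0$ of Lemma \ref{lem4.1} into the ordering $s_u<c_u$ and verifying that the constrained infimum is genuinely attained at the interior point $s_u$; by contrast the count of exactly two critical points comes cheaply from \eqref{eq4.1} once the monotone alternative for $G$ is excluded.
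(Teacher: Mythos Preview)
Your argument is correct and follows essentially the same route as the paper: both proofs use the lower bound \eqref{eq4.1} together with Lemma~\ref{lem4.1} to force $\Psi_u$ to take positive values, then count critical points and zeros by observing that the auxiliary function ($\phi_u$ in the paper, your $\psi$) has a unique critical point, and finish (iv) by the implicit function theorem.

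One small genuine difference is worth noting. For (i) the paper only gets $\Psi_u''(s_u)\ge 0\ge \Psi_u''(t_u)$ from the min/max nature of $s_u,t_u$ and then invokes Lemma~\ref{lem4.2} ($\cM^{0}=\emptyset$) to upgrade these to strict inequalities. You instead observe that the unique critical point of $\psi$ is its positive maximum and hence cannot be a zero, so both zeros of $\psi$ are simple; this gives $G''(w_1)=w_1^{\gamma_q q-1}\psi'(w_1)>0$ and $G''(w_2)<0$ directly, making (i) self-contained without appealing to Lemma~\ref{lem4.2}. This is a clean shortcut; the paper's detour through Lemma~\ref{lem4.2} is not needed here (though Lemma~\ref{lem4.2} is used elsewhere). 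Your handling of (ii) via $G(w^\ast)\ge h(R_0)=0\Rightarrow w^\ast\in[w_c,w_d]$ is also a bit more explicit than the paper's one-line ``Clearly, $|\nabla u^s|_p<|\nabla u^{c_u}|_p\le R_0$,'' but the content is the same.
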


\begin{proof}[\bf Proof]
Let $u\in S_c$ and recall that, by \eqref{eq4.1},
$$\Psi_u(t)=J(u^t)\geq h(|\nabla u^t|_2)=h(e^t|\nabla u|_2).$$
According to Lemma \ref{lem4.1}, the function $\Psi_u$ is positive
on $(\log(R_0/ |\nabla u|_2),\log(R_1/ |\nabla u|_2))$ which
combined with the facts $\Psi_u(t)\to0^-$ as $t\to-\infty$ and
$\Psi_u(t)\to-\infty$ as $t\to+\infty$ indicates that $\Psi_u$
has at least two critical points $s_u<t_u$. Here $s_u$ is the
local minimum point of $\Psi_u$ on $(-\infty,\log(R_0/ |\nabla u|_2))$
with $\Psi_u(s_u)<0$, while $t_u$ is the global maximum point
of $\Psi_u$ with $\Psi_u(t_u)>0$. Note that $\Psi_u'(t)=0$ is
equivalent to
$$\phi_u(t)=e^{(p-\gamma_q q)t}|\nabla u|_p^p
-e^{(p^\ast-\gamma_q q) t}|u|_{p^\ast}^{p^\ast}
-\mu \gamma_q|u|_q^q=0$$
and clearly $\phi_u$ has a unique critical point. Therefore, this means that$\Psi_u$
has exactly two critical points $s_u$ and $t_u$. Using a similar
argument, one can prove that $\Psi_u$ also has exactly two zeros
$c_u$ and $d_u$ with $s_u<c_u<t_u<d_u$.

Recall that $u^t\in \cM$ if and only if $\Psi_u'(t)=0$. Since
$s_u$ is the local minimum point of $\Psi_u$ and $t_u$ is the
global maximum point of $\Psi_u$, we have
$\Psi_u'(s_u)=\Psi_u'(t_u)=0$ and $\Psi_u''(s_u)\geq0\geq\Psi_u''(t_u)$.
Then Lemma \ref{lem4.2} ensures that $u^{s_u}\in \cM^+$
and $u^{t_u}\in\cM^-$.

Clearly, $|\nabla u^s|_2< |\nabla u^{c_u}|_2\leq R_0$ if $s< c_u$,
\eqref{eq4.4} is a consequence of the fact that $s_u$ is local
minimum point of $\Psi_u$ on $(-\infty,\log(R_0/ |\nabla u|_2))$
with $\Psi_u(s_u)<0$, and \eqref{eq4.5} is a consequence of the
fact that $t_u$ is the global maximum point of $\Psi_u$ with
$\Psi_u(t_u)>0$.

Observe that $\Psi'_u$ has exactly two zeros $s_u$ and $t_u$.
Moreover, $\Psi'_u(t)<0$ if $t>t_u$ which means that $\Psi_u$
is decreasing on $(t_u,+\infty)$. In particular, if $t_u<0$
then $P(u)=\Psi_u'(0)<0$.

Next we will show $u\mapsto t_u$ is of class $C^1$. For the purpose, let us define
$\Phi: S_c\times\R\to\R$ by
$\Phi(u,t)=\Psi_u'(t)$. It is clear that $\Phi$ is of class
$C^1$, $\Phi(u, t_u)=0$ and $\partial_t\Phi(u,t_u)=\Psi''_u(t_u)<0$.
Applying the Implicit Function Theorem, we see that the map
$u\mapsto t_u$ is of class $C^1$. The same argument proves
that $u\mapsto s_u$ is also $C^1$.
\end{proof}

It is easy to check that $R_0< t^\ast\leq S^{\frac{p^\ast}{p(p^\ast-p)}}$, where $t^\ast$ given in Lemma \ref{lem4.1}.
Recall that
$$D_{k}(c)=\{u\in S_c : |\nabla u|_2<k\},\ \ \text{for}\ k>0.$$
Thus, $D_{R_0}(c)\subset A_{t^\ast}(c)$. We deal in the sequel with the functional $J|_{S_c}$ has a positive
ground state in $D_{R_0}(c)$. For this purpose, we consider the infimum
$$\inf_{u\in D_{R_0}(c)}J(u).$$

\begin{lemma}\label{lem4.4}
We have
$$m(c)=\inf_{u\in\cM^+}J(u)=\inf_{u\in D_{R_0}(c)}J(u)=\inf_{u\in D_{t^\ast}(c)}J(u)
<\inf_{u\in\,\overline{D_{R_0}(c)}\setminus D_{R_0-\delta}(c)}J(u)$$
for $\delta>0$ small and $m(c)\in(-\infty,0)$.
\end{lemma}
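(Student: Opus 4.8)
The plan is to read off all four quantities from the fiber‑map picture of Lemma \ref{lem4.3} together with the sign pattern of the one‑variable function $h$ from Lemma \ref{lem4.1}, reserving the continuity of $h$ near $R_0$ for the final strict inequality. First I would dispose of the identity $m(c)=\inf_{\cM^+}J$. Since $\cM^0=\emptyset$ by Lemma \ref{lem4.2}, we have $\cM=\cM^+\cup\cM^-$, and on each piece the sign of $J$ is forced: for $u\in\cM^+$ the critical point $0$ of $\Psi_u$ must coincide with the strict local minimum $s_u$ (because $\Psi_u$ has only the two critical points $s_u<t_u$ and $\Psi_u''(0)>0$), so $J(u)=\Psi_u(s_u)<0$ by \eqref{eq4.4}; symmetrically, $u\in\cM^-$ forces $0=t_u$ and $J(u)=\Psi_u(t_u)>0$ by \eqref{eq4.5}. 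Since $\cM^+\neq\emptyset$ (it contains $u^{s_u}$ for any $u\in S_c$ by Lemma \ref{lem4.3}(i)), it follows that $\inf_{\cM^+}J<0\le\inf_{\cM^-}J$, whence $m(c)=\inf_{\cM}J=\inf_{\cM^+}J<0$.

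Next I would establish $\inf_{\cM^+}J=\inf_{D_{R_0}(c)}J$. For the inclusion $\cM^+\subseteq D_{R_0}(c)$: if $u\in\cM^+$ then $0=s_u<c_u$, so Lemma \ref{lem4.3}(ii) gives $|\nabla u|_p=|\nabla u^{0}|_p<R_0$; hence $\inf_{D_{R_0}(c)}J\le\inf_{\cM^+}J$. For the reverse inequality, take any $u\in D_{R_0}(c)$. Because $|\nabla u^{0}|_p=|\nabla u|_p<R_0$, the value $s=0$ is admissible in the minimization \eqref{eq4.4}, so $J(u)=J(u^{0})\ge J(u^{s_u})$, and since $u^{s_u}\in\cM^+$ this gives $J(u)\ge\inf_{\cM^+}J$. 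Taking the infimum over $D_{R_0}(c)$ yields $\inf_{D_{R_0}(c)}J\ge\inf_{\cM^+}J$, so the two infima agree.

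For $\inf_{D_{R_0}(c)}J=\inf_{D_{t^\ast}(c)}J$ the key ingredient is the estimate $J(u)\ge h(|\nabla u|_p)$ from \eqref{eq4.1}, combined with the location of $t^\ast$: since $h(t^\ast)>0$ (shown in the proof of Lemma \ref{lem4.1}) and $h>0$ only on $(R_0,R_1)$, we have $t^\ast\in(R_0,R_1)$. Writing $D_{t^\ast}(c)=D_{R_0}(c)\cup\{u\in S_c:R_0\le|\nabla u|_p<t^\ast\}$, on the second set $|\nabla u|_p\in[R_0,t^\ast)\subset[R_0,R_1)$, where $h\ge0$; hence there $J(u)\ge h(|\nabla u|_p)\ge0>m(c)$, so enlarging $D_{R_0}(c)$ to $D_{t^\ast}(c)$ cannot lower the infimum, giving the desired equality. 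The same estimate over the compact interval $[0,R_0]$ shows $\inf_{D_{R_0}(c)}J\ge\min_{[0,R_0]}h>-\infty$, so that $m(c)\in(-\infty,0)$.

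Finally, for the strict inequality I would invoke continuity of $h$ at $R_0$. Since $h(R_0)=0>m(c)$, there exists $\delta>0$ with $h(t)>\tfrac{1}{2}m(c)$ for all $t\in[R_0-\delta,R_0]$; then every $u\in\overline{D_{R_0}(c)}\setminus D_{R_0-\delta}(c)$ satisfies $R_0-\delta\le|\nabla u|_p\le R_0$, whence $J(u)\ge h(|\nabla u|_p)>\tfrac{1}{2}m(c)>m(c)$, which is the claimed strict inequality. I do not expect a genuine obstacle here: once Lemmas \ref{lem4.1}--\ref{lem4.3} are in hand, the whole statement reduces to the scalar inequality $J(u)\ge h(|\nabla u|_p)$ and the explicit sign pattern of $h$. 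The only point demanding care is the bookkeeping in the second paragraph, namely correctly identifying the critical value $0$ with $s_u$ (respectively $t_u$) on $\cM^{\pm}$ and verifying that $s=0$ is admissible in \eqref{eq4.4}, which is exactly what converts the fiber‑map minimization into a statement about the sublevel region $D_{R_0}(c)$.
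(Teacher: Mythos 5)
Your proposal is correct and follows essentially the same route as the paper's proof: the identification $m(c)=\inf_{\cM^+}J$ via the sign of $J$ on $\cM^{\pm}$ and $\cM^0=\emptyset$, the two inclusions between $\cM^+$ and $D_{R_0}(c)$ using \eqref{eq4.4}, the pointwise bound $J(u)\geq h(|\nabla u|_p)$ to handle both the annulus $D_{t^\ast}(c)\setminus D_{R_0}(c)$ and the lower bound $m(c)>-\infty$, and continuity of $h$ at $R_0$ with $h(R_0)=0>m(c)$ for the final strict inequality. The only cosmetic difference is your explicit intermediate value $\tfrac12 m(c)$ where the paper simply takes $\inf_{[R_0-\delta,R_0]}h>m(c)$; both are valid.
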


\begin{proof}[\bf Proof]
We deduce from Lemma \ref{lem4.3} that $J(u)<0$ if $u\in\cM^+$ while $J(u)>0$ if
$u\in\cM^-$, which together with Lemma \ref{lem4.2} implied $\inf_{u\in\cM^+}J(u)=\inf_{u\in\cM}J(u)=m(c)$.
It follows from $\cM^+\subset D_{R_0}(c)$ that $\inf_{u\in D_{R_0}(c)}J(u)\leq m(c)$.
By Lemma \ref{lem4.3} again, for $u\in D_{R_0}(c)$, we have
$u^{s_u}\in \cM^+$ and
$$J(u^{s_u})=\min\left\{J(u^s) : s\in \R,\ |\nabla u^s|_2<R_0\right\}
\leq J(u),$$
which implies $m(c)\leq \inf_{u\in D_{R_0}(c)}J(u)$ and henceforth
$m(c)=\inf_{u\in D_{R_0}(c)}J(u)$.

It follows from \eqref{eq4.1} that, for $u\in D_{R_0}(c)$,
$$J(u)\geq h(|\nabla u|_2)\geq\min_{t\in[0,R_0]}h(t)>-\infty.$$
Lemma \ref{lem4.3} states that $u^{s_u}\in D_{R_0}(c)$ and
$J(u^{s_u})<0$ if $u\in S_c$. Then $-\infty<m(c)<0$.

In view of Lemma \ref{lem4.1}, $J(u)\geq 0$ for
$u\in D_{t^\ast}(c)\setminus D_{R_0}(c)$ which implies
$m(c)=\inf_{u\in D_{t^\ast}(c)}J(u)$.
Since $h$ is continuous and $h(R_0)=0$, there holds
$$\inf_{t\in[R_0-\delta,R_0]}h(t)>m(c)\ \
\text{for}\ \delta>0\ \text{small}.$$
Note that $J(u)\geq h(|\nabla u|_2)$ for $u\in S_c$. Then we
have
$$\inf_{u\in\,\overline{D_{R_0}(c)}\setminus D_{R_0-\delta}(c)}J(u)
\geq \inf_{t\in[R_0-\delta,R_0]}h(t)>m(c),$$
concluding the proof.
\end{proof}

For $0<R_0<R_1<\infty$, let $\xi:\R^+\to[0,1]$ be a nonincreasing and $C^\infty$ function satisfying
$\xi(x)=1$, $x\in[0,R_0]$ and $\xi(x)=0$, $x\geq R_1$. In what follows, let us consider the truncated
functional
\begin{align*}
J_T(u)
&=\frac{1}{p} |\nabla u|_p^p
-\frac{\xi(|\nabla u|_p)}{{p^\ast}}|u|_{p^\ast}^{p^\ast}-\frac{\mu}{q}|u|_q^q\\
&\geq\frac{1}{p} |\nabla u|_p^p
-\frac{S^{-\frac{p^\ast}{p}}\xi(|\nabla u|_p)}{p^\ast}|\nabla u|_p^{p^\ast}
-\frac{\mu}{q}C(q)c^{(1-\gamma_q)q}|\nabla u|_p^{\gamma_q q}.
\end{align*}

A direct computation shows $J_T(u)\geq \bar{h}(|\nabla u|_p)$, where $\bar{h}: \mathbb{R}^+\to\mathbb{R}$ is given by $$\bar{h}(t)=\frac{1}{p} t^p
-\frac{S^{-\frac{p^\ast}{p}}\xi(t)}{p^\ast}t^{p^\ast}
-\frac{\mu}{q}C(q)c^{(1-\gamma_q)q}t^{\gamma_q q}.$$
Observe that $\bar{h}<0$ over interval $(0,R_0)$ and $\bar{h}(t)>0$ over $(R_0,+\infty)$ and hence
$$m(c):=m(c, \mu)=\inf_{u\in D_{R_0}(c)}J(u)=\inf_{u\in D_{R_0}(c)}J_T(u)=\inf_{u\in S_c}J_T(u).$$

\begin{lemma}\label{lem4.5}
$(i)$ $J_T\in C^1(E,\R)$. If $J_T(u)<0$ then $|\nabla u|_p<R_0$ and $J(v)=J_T(v)$ for all
$v$ in a small enough neighborhood of $u$.\\
$(ii)$ $J_T$ verifies a local $(PS)_m$ condition on $S_c$ for the level $m<0$.
\end{lemma}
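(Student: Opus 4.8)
The plan is to treat the two assertions separately: (i) is essentially elementary, while (ii) rests on a Brezis--Lieb splitting in which the truncation pins the gradient norm below the critical Sobolev threshold, thereby killing the bubble.

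For (i), the only non-routine point is the $C^1$-regularity of the truncation factor $u\mapsto \xi(|\nabla u|_p)$. I would write $|\nabla u|_p=\big(|\nabla u|_p^p\big)^{1/p}$ and note that $F(u)=|\nabla u|_p^p$ is a standard $C^1$ functional on $E$, while the scalar map $\Theta(s)=\xi(s^{1/p})$ is $C^1$ on $[0,\infty)$: since $\xi\equiv 1$ on $[0,R_0]$, one has $\Theta\equiv 1$ on $[0,R_0^p]$, so no singularity arises at $s=0$, and $\Theta$ is smooth for $s>0$. Hence $\xi(|\nabla u|_p)=\Theta(F(u))$ is $C^1$, and together with the standard $C^1$ functionals $|\nabla u|_p^p,\ |u|_{p^\ast}^{p^\ast},\ |u|_q^q$ this gives $J_T\in C^1(E,\R)$. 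The implication ``$J_T(u)<0\Rightarrow|\nabla u|_p<R_0$'' follows from $J_T(u)\ge \bar h(|\nabla u|_p)$ combined with $\bar h(R_0)=h(R_0)=0$ and $\bar h>0$ on $(R_0,\infty)$ from Lemma \ref{lem4.1}: if $|\nabla u|_p\ge R_0$ then $J_T(u)\ge 0$. By continuity of $u\mapsto |\nabla u|_p$ there is a neighborhood on which $|\nabla v|_p<R_0$, where $\xi\equiv 1$ and hence $J_T(v)=J(v)$. I would also record here that $\xi'\equiv 0$ on $[0,R_0]$, so on the open set $\{|\nabla v|_p<R_0\}$ not only $J_T=J$ but also $J_T'=J'$, the derivative of the truncation factor vanishing identically there.

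For (ii), let $\{u_n\}\subset S_c$ be a Palais--Smale sequence for $J_T|_{S_c}$ at a level $m<0$. Since $J_T(u_n)\to m<0$, part (i) gives $|\nabla u_n|_p<R_0$ for $n$ large, so $\{u_n\}$ is bounded and, on this region, $J_T=J$ and $J_T'=J'$; thus $\{u_n\}$ is a bounded $(PS)_m$ sequence for $J|_{S_c}$, with Lagrange multipliers $\lambda_n$ as in \eqref{eq3.3}, bounded and (up to a subsequence) $\lambda_n\to\lambda$. Using the compact radial embedding $E\hookrightarrow L^q(\R^N)$ for $p<q<p^\ast$, I pass to a subsequence with $u_n\rightharpoonup u$ in $E$, $u_n\to u$ in $L^q$ and a.e.; the Egorov/truncation argument of Lemma \ref{lem3.3} (through Lemma \ref{lem3.2}) yields $\nabla u_n\to\nabla u$ a.e., so $u$ is a weak solution of \eqref{eq1.1} with multiplier $\lambda$ and satisfies the Pohozaev identity \eqref{eq3.8}. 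Combining \eqref{eq3.8} with the weak equation tested by $u$ gives $\lambda|u|_p^p=-\mu(1-\gamma_q)|u|_q^q$, so that $\lambda<0$ as soon as $u\neq 0$.

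The heart of the matter is excluding both vanishing and bubbling, via a Brezis--Lieb splitting. Writing $v_n=u_n-u$ and setting $A=\lim|\nabla v_n|_p^p$, $B=\lim|v_n|_{p^\ast}^{p^\ast}$, $D=\lim|v_n|_p^p$ along a subsequence, the Brezis--Lieb lemma together with $u_n\to u$ in $L^q$ and $c^p=|u|_p^p+D$ turns the identity obtained by testing \eqref{eq3.3} with $u_n$ and subtracting the limit equation into $A=B+\lambda D$, while the energy splits as $m=J(u)+\tfrac1p A-\tfrac1{p^\ast}B$. The decisive structural fact is that the truncation forces $A\le\lim|\nabla u_n|_p^p\le R_0^p<S^{N/p}$, because $R_0<t^\ast\le S^{p^\ast/(p(p^\ast-p))}$ and $\tfrac{p^\ast}{p^\ast-p}=\tfrac Np$. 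If $u=0$, then $m=\tfrac1p A-\tfrac1{p^\ast}B<0$ forces $B>\tfrac{p^\ast}{p}A\ge A$, which with the Sobolev bound $B\le S^{-p^\ast/p}A^{p^\ast/p}$ yields $A\ge S^{N/p}$, contradicting $A<S^{N/p}$; hence $u\neq 0$ and $\lambda<0$. Then $A=B+\lambda D\le B\le S^{-p^\ast/p}A^{p^\ast/p}$, and the same dichotomy forces $A=0$ (the alternative $A\ge S^{N/p}$ being again excluded). From $A=0$ the Sobolev inequality gives $B=0$, and $0=A=B+\lambda D=\lambda D$ with $\lambda<0$ gives $D=0$; therefore $u_n\to u$ in $D^{1,p}(\R^N)$ and in $L^p(\R^N)$, hence in $E$, with $|u|_p=c$, which is exactly the local $(PS)_m$ condition. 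The main obstacle, as anticipated, is the non-compactness of $E\hookrightarrow L^{p^\ast}(\R^N)$: it is overcome precisely because the truncation keeps $|\nabla u_n|_p$ below $S^{N/p^2}$, confining the escaping mass $A$ strictly below the critical level $S^{N/p}$ and so preventing the formation of a Sobolev bubble.
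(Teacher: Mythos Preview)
Your argument is correct. Part (i) matches the paper (which simply says it is immediate), and you give a bit more detail on why the truncation factor is $C^1$.

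For part (ii), however, your route to strong convergence is genuinely different from the paper's. After the common setup (boundedness from $|\nabla u_n|_p<R_0$, weak limit $u$, bounded multipliers $\lambda_n\to\lambda$, a.e.\ convergence of gradients, and the Pohozaev identity for $u$), the paper invokes Lions' second concentration--compactness lemma: it introduces the defect measures with Dirac masses $\mu_i,\nu_i$, tests against cut-offs to obtain $\mu_i\le\nu_i$, and then rules out each concentration point via $R_0^p<S^{N/p}$; this yields $u_n\to u$ in $L^{p^\ast}_{\mathrm{loc}}$, which is upgraded to global $L^{p^\ast}$-convergence using a pointwise radial decay estimate, and only then is the norm identity $|\nabla u_n|_p^p-\lambda|u_n|_p^p\to|\nabla u|_p^p-\lambda|u|_p^p$ used to conclude. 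You bypass both the concentration--compactness machinery and the radial decay estimate entirely: the Brezis--Lieb splitting together with the tested Euler--Lagrange identity gives directly $A=B+\lambda D$, and the truncation bound $A\le R_0^p<S^{N/p}$ kills the alternative $A\ge S^{N/p}$ in the Sobolev dichotomy, forcing $A=0$; then $B=0$ and, crucially, $\lambda<0$ turns $\lambda D=0$ into $D=0$, giving strong convergence in $L^p$ without any further work. Your approach is more elementary and exploits the normalized structure (the sign of the Lagrange multiplier) more directly; the paper's approach is heavier but more geometric, localizing exactly where a bubble would form before excluding it.
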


\begin{proof}[\bf Proof]
Conclusion (i) follows immediately from the definition of $J_T$.

(ii) Let $\{u_n\}$ be a $(PS)_m$ sequence of $J_T$ prescribed to $S_c$ with $m<0$. Then
by the definition of $J_T$, we have $|\nabla u_n|_p<R_0<t_0$ for $n$ large enough. And hence
$\{u_n\}$ is also a $(PS)_m$ sequence of $J$ prescribed to $S_c$ with $m<0$.
Since $\{u_n\}\subset E$ is a bounded, we may assume that $u_n\rightharpoonup u$ weakly in $E$ and $u_n\rightarrow u$ strongly in $L^{q}(\mathbb{R}^N)$
for $p<q<p^\ast$. We deduce from $(J|_{S_c})'(u_n)\to0$ that there exists a sequence
$\{\lambda_n\}$ of real numbers such that for $\varphi\in E$,
\begin{align}\label{eq4.6}
\int_{\R^N}|\nabla u_n|^{p-2}\nabla u_n\nabla \varphi
&=\int_{\R^N}|u_n|^{p^\ast-2}u_n\varphi+\mu\int_{\R^N}|u_n|^{q-2}u_n\varphi
\notag\\
&\ \ \ +\lambda_n\int_{\R^N}|u_n|^{p-2}u_n\varphi+o(1)\|\varphi\|
\end{align}
as $n\to\infty$. Taking $u_n$ as a test function in \eqref{eq4.6},
we see that $\{\lambda_n\}$ is bounded. Assume up to a subsequence
that $\lambda_n\to\lambda$ as $n\to\infty$.
If $u=0$, then $u_n\to 0$ in $L^q(\R^N)$, and
\begin{align*}
J(u_n)&=\frac{1}{p} |\nabla u_n|_p^p
-\frac{1}{p^\ast}|u_n|_{p^\ast}^{p^\ast}-\frac{\mu}{q}|u_n|_q^q\notag\\
&\geq\frac{1}{p} |\nabla u_n|_p^p
-\frac{S^{-\frac{p^\ast}{p}}}{p^\ast}|\nabla u_n|_p^{p^\ast}+o(1)\notag\\
&\geq\frac{p^\ast-1}{pp^\ast} |\nabla u_n|_p^p+o(1),
\end{align*}
which is a contradiction with $m<0$. Thus
$u\neq 0$.
A similar argument to the proof of Lemma \ref{lem3.3} shows that
\begin{equation*}\nabla u_n(x)\to \nabla u(x)\ {\text a.e.}\ \in \R^N.\end{equation*}
Thus, taking the limit in \eqref{eq4.6} as $n\to\infty$, we have
\begin{equation}\label{eq4.7}
\int_{\R^N}|\nabla u|^{p-2}\nabla u\nabla \varphi
=\int_{\R^N}|u|^{p^\ast-2}u\varphi+\mu\int_{\R^N}|u|^{q-2}u\varphi
+\lambda\int_{\R^N}|u|^{p-2}u\varphi,
\end{equation}
which means that $u$ is a weak solution of the equation
$$-\Delta_p u=\lambda |u|^{p-2}u+\mu|u|^{q-2}u+|u|^{p^\ast-2}u\ \
\text{in}\ \R^N.$$
We know from \cite[Proposition 2.1]{ll11} that $u$ satisfies the following Poho\v zaev
identity
\begin{equation}\label{eq4.8}
\int_{\R^N}|\nabla u|^p
-\int_{\R^N}|u|^{p^\ast}-\mu \gamma_q \int_{\R^N}|u|^q=0.
\end{equation}
Then, in view of
\eqref{eq4.7}, \eqref{eq4.8} and $\gamma_q<1$, we have
\begin{align*}
\lambda |\nabla u|_p^p=|\nabla u|_p^p
-|u|_{p^\ast}^{p^\ast}-\mu|u|_q^q=\mu(\gamma_q-1)|u|_q^q<0,
\end{align*}
which ensures $\lambda<0$.

Since $u_n\rightharpoonup u$ in $D^{1,p}$,
using the second Concentration Compactness lemma of Lions [15], there exist an at most countable index
set $K$, a family $\{x_i: i\in K\}\subset \mathbb{R}^N$ and two families of positive numbers $\{\mu_i: i\in K\},\ \{\nu_i: i\in K\}$ such that
\begin{equation}\label{eq4.9}
|\nabla u_n|^p\rightharpoonup\mu\geq|\nabla u|^p+\sum_{i\in K}\mu_i\delta_{x_i},\
|u_n|^{p^\ast}\rightharpoonup\nu=|u|^{p^\ast}+\sum_{i\in K}\nu_i\delta_{x_i}
\end{equation}
weakly star convergence in the sense of measures, where $\delta_{x_i}$ is the Dirac mass concentrated at $x_i$
and that
\begin{equation}\label{eq4.10}
\sum_{i\in K}\nu_i^{p/p^\ast}<\infty;\ S\nu_i^{p/p^\ast}\leq\mu_i,\  i\in K.
\end{equation}
Taking $x_i\in\mathbb{R}^N$ in the support of the singular part of $\omega,\nu,\zeta$.  Now for any $\epsilon>0$, we define $\chi_\epsilon(x):=\overline{\chi}_\epsilon(x-x_i)$, where $\overline{\chi}_\epsilon\in C_0^\infty(\mathbb{R}^N,[0,1])$ such that $\overline{\chi}_\epsilon\equiv1$ on $B_\epsilon(0)$, $\overline{\chi}_\epsilon\equiv0$ on $\mathbb{R}^N\backslash B_{2\epsilon}(0)$ and $|\nabla \overline{\chi}_\epsilon|\in [0,\frac{2}{\epsilon}]$.
Now we divide the proof into three steps.

$Step 1:$ For any $i\in K,\  \mu_i\leq\nu_i.$
It is clear that the sequence $\{\chi_\epsilon u_n\}$ is bounded in $E$, then we have
\begin{equation}\label{eq4.11}
\int_{\mathbb{R}^N} u_n|\nabla u_n|^{p-2}\nabla u_n\nabla\chi_\epsilon=\int_{\mathbb{R}^N}\left(-|\nabla u_n|^p+|u_n|^{p^\ast}+\mu|u_n|^{q}+\lambda_n|u_n|^p\right)\chi_\epsilon +o(1).
\end{equation}
Using the H\"{o}lder inequality, we obtain the following limit expression:
\begin{align}\label{eq4.12}
\limsup_{n\to\infty}\left|\int_{\mathbb{R}^N} u_n|\nabla u_n|^{p-2}\nabla u_n\nabla\chi_\epsilon\right|
&\leq\limsup_{n\to\infty}\left(\int_{B_{2\epsilon(x_i)}} |u_n\nabla \chi_\epsilon|^p\right)^{\frac{1}{p}}\left(\int_{B_{2\epsilon(x_i)}} |\nabla u_n|^p\right)^{\frac{p-1}{p}}\notag\\
&\leq C\left(\int_{B_{2\epsilon(x_i)}} |u|^p|\nabla\chi_\epsilon|^p \right)^{\frac{1}{p}}\notag\\
&\leq C\left(\int_{B_{2\epsilon(x_i)}} |u|^{p^\ast}\right)^{\frac{1}{p^\ast}}\left(\int_{B_{2\epsilon(x_i)}} |\nabla\chi_\epsilon|^\frac{pp^\ast}{p^\ast-p}\right)^{\frac{p^\ast-p}{pp^\ast}}\notag\\
&=C\left(\int_{B_{2\epsilon(x_i)}} |u|^{p^\ast}\right)^{\frac{1}{p^\ast}}\to0,
\end{align}
as $\epsilon\to0$. Moreover, since $\chi_\epsilon$ has compact support, there holds
 \begin{equation}\label{eq4.13}\lim_{n\to\infty}\int_{\mathbb{R}^N}|\nabla u_n|^p \chi_\epsilon \geq\int_{\mathbb{R}^N}|\nabla u|^p \chi_\epsilon+\langle\sum_{i\in K}\mu_i\delta_{x_i},\chi_\epsilon\rangle,\end{equation}
\begin{equation}\label{eq4.14}\lim_{n\to\infty}\int_{\mathbb{R}^N}|u_n|^{p^\ast} \chi_\epsilon=\int_{\mathbb{R}^N}|\nabla u|^{p^\ast} \chi_\epsilon+\langle\sum_{i\in K}\nu_i\delta_{x_i},\chi_\epsilon\rangle.\end{equation}
Moreover, if follows from \eqref{eq4.13} and \eqref{eq4.14} that
\begin{equation}\label{eq4.15}
\begin{split}
&\limsup_{n\to\infty}\int_{\mathbb{R}^N} u_n|\nabla u_n|^{p-2}\nabla u_n\nabla\chi_\epsilon
\leq-\int_{\mathbb{R}^N}|\nabla u|^p\chi_\epsilon
+\int_{\mathbb{R}^N}|u|^{p^\ast}\chi_\epsilon\\
&\ \ \
+\int_{\mathbb{R}^N}\mu|u|^{q}\chi_\epsilon+\int_{\mathbb{R}^N}\lambda u^2\chi_\epsilon-\langle\sum_{i\in K}\mu_i\delta_{x_i},\chi_\epsilon\rangle
+\langle\sum_{i\in K}\nu_i\delta_{x_i},\chi_\epsilon\rangle.
\end{split}
\end{equation}
Thus, taking the limit in \eqref{eq4.15} as $\epsilon\to0$ , we have from \eqref{eq4.10} that
\begin{equation}\label{eq4.16}
\mu_i\leq\nu_i.
\end{equation}

$Step 2:$ $\mu_i=0$ for any $i\in K$ and $K=\emptyset$.
Suppose that there exists $i_0\in K$ such that $\mu_{i_0}>0$. Using \eqref{eq4.10} and \eqref{eq4.16}, we obtain
\begin{equation}\label{eq4.17}\mu_{i_0}\geq S^\frac{p^\ast}{p^\ast-p}.\end{equation}
Consequently, by virtue of  \eqref{eq4.13} and \eqref{eq4.17},
$$R_0^p\geq\limsup_{n\to\infty}|\nabla u_n|_p^p\geq\mu_{i_0}
\geq S^\frac{p^\ast}{p^\ast-p},$$
which contradicts with $R_0<t^\ast\leq S^\frac{p^\ast}{p (p^\ast-p)}$. Then $K=\emptyset$, and hence
\begin{equation}\label{eq4.18}u_n\to u\ {\rm in}\ L^{p^\ast}_{loc}(\R^N).\end{equation}

$Step 3:$ $u_n\to u$ in $E$.

Since $\{u_n\}$ is also bounded in $E$, we know from \cite{sww} that

$$|u_n(x)|\leq C|x|^{-\frac{N-1}{p}}\|u_n\|\leq C_1|x|^{-\frac{N-1}{p}},\ {\rm a.e.\ for}\ |x|\geq R,$$
and then
$$|u_n(x)|^{p^\ast}\leq\frac{C_2}{|x|^\frac{N(N-1)}{N-p}},\ {\rm a.e.\ for}\ |x|\geq R.$$
Notice that $\frac{C_1}{|\cdot|^\frac{N(N-1)}{N-p}}\in L^1(\R^N\setminus B_R(0))$ and $u_n(x)\to u(x)$,
a.e. $x\in \R^N\setminus B_R(0)$, then the Lebesgue's Theorem leads to
$$u_n\to u\ {\rm in}\  L^{p^\ast}(\R^N\setminus B_R(0)),$$
which, together with \eqref{eq4.18}, implies that
\begin{equation}\label{eq4.19}u_n\to u\ {\rm in}\  L^{p^\ast}(\R^N).\end{equation}
Thus, taking $\varphi=u_n$ as the test function in \eqref{eq4.6},
\begin{align}\label{eq4.20}
|\nabla u_n|_p^p-\lambda|u_n|_p^p=|u_n|_{p^\ast}^{p^\ast}+\mu|u_n|_q^q
+o(1).
\end{align}
Then, we deduce from \eqref{eq4.18}, \eqref{eq4.19} and \eqref{eq4.20} that
$$\lim_{n\to\infty}(|\nabla u_n|_p^p-\lambda|u_n|_p^p)=|u|_{p^\ast}^{p^\ast}+\mu|u|_q^q.$$
Combining with \eqref{eq4.7} yields
$$\lim_{n\to\infty}(|\nabla u_n|_p^p-\lambda|u_n|_p^p)=|\nabla u|_p^p-\lambda|u|_p^p.$$
Note that $\lambda<0$, we immediately from \eqref{eq2.5} obtain $u_n\to u$ in $E$.
\end{proof}

\begin{proof}[\bf Proof of Theorem \ref{the1.1} (Part I)]
Recalling that $J_T$ is bounded below on $S_c$.
By the definition of $m(c)$, for each $n\in\N_+$ there exists
$v_n\in S_c$ such that $v_n\geq 0$ and
$$J_T(v_n)<m(c)+\frac{1}{n}.$$
Employing the \cite[Theorem 8.5]{w} yields the existence
of $\{u_n\}\subset S_c$ such that
\begin{equation*}
m(c)\leq J_T(u_n)\leq m(c)+\frac{2}{n},\ \ \min_{\lambda\in\R}\|J_T'(u_n)-\lambda |u_n|^{p-2}u_n\|\leq\frac{4}{\sqrt{n}},\
\|u_n-v_n\|\leq\frac{1}{\sqrt{n}}.
\end{equation*}
Then, we obtain that $\{u_n\}$ is a Palais-Smale sequence for $J_T|_{S_c}$
at the level $m(c)$ with $|u_n^-|_p\to0$ as $n\to\infty$.
By Lemma \ref{lem4.5}, we have $u_n\to u$ in $E$ for some $u$, and hence
$J_T(u)=m(c)$. Moreover, we see from Lemmas \ref{lem4.4} and \ref{lem4.5}
that $u$ is in the set
$D_{R_0}(c)$. Therefore, $u$ is also a critical point of $J$ on $S_c$ with $J(u)=m(c)$ and solves \eqref{eq1.1} and \eqref{eq1.3} for
some $\lambda<0$. By $|u_n^-|_p\to 0$ as $n\to\infty$, we see that $u\geq0$.
 Moreover, we borrow the proof in \cite[Theorem 1.1]{sy1}.
It is elementary to realize that $u(x)>0$ for $x\in \R^N$.
The proof is completed.
\end{proof}

\subsection{Infinitely many normalized solutions}

In what follows, in order to formulate the infinitely many solutions, we present a minimax theorem for a class of constrained even functionals that is proved in Jeanjean and Lu \cite{jssl}. We introduce the notation of the genus. Let $\Sigma(S_c)$ be the family of closed symmetric subsets of $S_c$. For any nonempty set
$A\in\Sigma(S_c)$, the genus $\mathcal {G}(A)$ of $A$ is defined as the least integer
$k\geq1$ for which there exists an odd continuous mapping $\varphi:A\to\R^k\setminus\{0\}$.
We set $\mathcal {G}(A)=\infty$ if such an integer does not exist, and set
$\mathcal {G}(A)=0$ if $A=\emptyset$. For each $k\in\N_+$, let
$\Gamma_k:=\{A\in\Sigma(S_c)|\mathcal {G}(A)\geq k\}$.
We shall need some basic properties of the genus. For $A \subset S_c$ and $\delta>0$, denote by $A_{\delta}$ the uniform $\delta$-neighborhood of $A$ in $S_c$, that is,
$$A_\delta:=\{u\in E|\ \inf_{v\in A}\|u-v\|<\delta\}.$$
Since $S_c$ is a closed symmetric subset of $E$, repeating the arguments in \cite[Section 7]{phr}, we have the following lemma.

\begin{lemma}\cite[Proposition 2.2]{jssl}\label{lem4.6} Let $A, B \in \Sigma(S_c)$. Then the following statements hold.\\
(i) If $\mathcal{G}(A) \geq 2$, then $A$ contains infinitely many distinct points.\\
(ii) $\mathcal{G}(\overline{A \backslash B}) \geq \mathcal{G}(A)-\mathcal{G}(B)$ if $\mathcal{G}(B)<\infty$.\\
(iii) If there exists an odd continuous mapping $\psi: \mathbb{S}^{k-1} \rightarrow A$, then $\mathcal{G}(A) \geq k$.\\
(iv) If $A$ is compact, then $\mathcal{G}(A)<\infty$ and there exists $\delta>0$ such that $A_{\delta} \in \Sigma(S_c)$ and $\mathcal{G}(A_\delta)=\mathcal{G}(A)$.
\end{lemma}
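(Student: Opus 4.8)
The plan is to establish these as the four classical structural properties of the Krasnoselskii genus, the whole theory being available because $0\notin S_c$: since $|u|_p=c>0$, every $A\in\Sigma(S_c)$ avoids the origin and satisfies $u\neq -u$ for all $u\in A$. Two general tools will be used throughout: $S_c\subset E$ is a metric space, hence normal, so the Tietze extension theorem applies to closed subsets; and genus is monotone, $\mathcal{G}(A)\leq\mathcal{G}(B)$ whenever $A\subset B$, by restricting an odd map realizing $\mathcal{G}(B)$. For $(i)$ I would prove the contrapositive by showing that a nonempty finite set $A\in\Sigma(S_c)$ has $\mathcal{G}(A)=1$: since $0\notin A$, its points split into disjoint antipodal pairs $\{u,-u\}$, and setting $\varphi(u)=1$, $\varphi(-u)=-1$ gives an odd map $\varphi\colon A\to\mathbb{R}\setminus\{0\}$, automatically continuous as $A$ is discrete; hence $\mathcal{G}(A)\geq 2$ forces $A$ infinite. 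For $(iii)$, arguing by contradiction, if $\mathcal{G}(A)=j<k$ (note $A\neq\emptyset$ forces $j\geq 1$) and $\varphi\colon A\to\mathbb{R}^j\setminus\{0\}$ is odd, then normalizing $\varphi\circ\psi$ produces an odd continuous map $\mathbb{S}^{k-1}\to\mathbb{S}^{j-1}$ with $j-1<k-1$, contradicting the Borsuk--Ulam theorem.

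For $(ii)$, set $m=\mathcal{G}(\overline{A\setminus B})$ and $n=\mathcal{G}(B)<\infty$; it suffices to build an odd map $A\to\mathbb{R}^{m+n}\setminus\{0\}$, since that yields $\mathcal{G}(A)\leq m+n$, i.e.\ the claim. Take odd continuous $\varphi\colon\overline{A\setminus B}\to\mathbb{R}^m\setminus\{0\}$ and $\psi\colon B\to\mathbb{R}^n\setminus\{0\}$ (both domains being closed and symmetric). Extend each componentwise by Tietze to $S_c$ and symmetrize via $g(u)\mapsto\frac{1}{2}\bigl(g(u)-g(-u)\bigr)$ to obtain odd continuous extensions $\widetilde\varphi,\widetilde\psi$ agreeing with $\varphi,\psi$ on their domains. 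Then $\Phi(u)=\bigl(\widetilde\varphi(u),\widetilde\psi(u)\bigr)$, restricted to $A$, is odd, continuous, and nonvanishing: if $u\in B$ then $\widetilde\psi(u)=\psi(u)\neq 0$, while if $u\in A\setminus B\subset\overline{A\setminus B}$ then $\widetilde\varphi(u)=\varphi(u)\neq 0$.

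For $(iv)$, finiteness of $\mathcal{G}(A)$ follows from compactness and antipodal separation: for each $x\in A$ pick a ball $O_x$ with $O_x\cap(-O_x)=\emptyset$, extract from the symmetric cover $\{O_x,-O_x\}$ a finite subcover, and assemble from it, by a partition-of-unity construction, an odd continuous map $A\to\mathbb{R}^m\setminus\{0\}$ for some finite $m$. For the neighborhood, let $\varphi\colon A\to\mathbb{R}^m\setminus\{0\}$ realize $m=\mathcal{G}(A)$. Because $\varphi(A)$ is compact in $\mathbb{R}^m\setminus\{0\}$ we have $\inf_A|\varphi|>0$; extending $\varphi$ oddly by Tietze and invoking uniform continuity, I would choose $\delta>0$ so small that this extension stays bounded away from $0$ on the closed symmetric neighborhood $A_\delta$, giving $\mathcal{G}(A_\delta)\leq m$. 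Since $A\subset A_\delta$, monotonicity gives the reverse inequality, hence $\mathcal{G}(A_\delta)=\mathcal{G}(A)$.

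The only genuinely non-elementary ingredient is the Borsuk--Ulam theorem used in $(iii)$; everything else is point-set topology (Tietze extension, compactness, partitions of unity). I expect the real technical care to sit in $(iv)$: one must guarantee that the odd extension of $\varphi$ does not acquire a zero anywhere on a full uniform neighborhood of $A$, and this is exactly where the compactness of $A$ (yielding $\inf_A|\varphi|>0$) together with uniform continuity of the extension are indispensable.
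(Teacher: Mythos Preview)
The paper does not prove this lemma at all: it is stated as a citation of \cite[Proposition 2.2]{jssl}, with the remark that the arguments of \cite[Section 7]{phr} carry over because $S_c$ is closed and symmetric in $E$. Your proposal therefore goes well beyond what the paper does, supplying precisely the standard Krasnoselskii--Rabinowitz arguments that the paper defers to the literature. The sketches for $(i)$--$(iii)$ are correct as written; in $(ii)$ you should add the remark that the case $\mathcal{G}(\overline{A\setminus B})=\infty$ is trivial, since your construction tacitly assumes $m<\infty$.

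There is one small technical slip in $(iv)$. You invoke ``uniform continuity'' of the odd Tietze extension $\widetilde\varphi$ to keep it bounded away from zero on $A_\delta$, but the ambient space $S_c\subset E$ is infinite-dimensional, so a merely continuous extension need not be uniformly continuous, and $A_\delta$ is not compact. The correct (and simpler) argument is purely topological: set $\varepsilon=\min_A|\varphi|>0$ by compactness of $A$; then $V=\{u\in S_c:|\widetilde\varphi(u)|>\varepsilon/2\}$ is open and contains $A$, so the continuous function $u\mapsto\mathrm{dist}(u,S_c\setminus V)$ is positive on the compact set $A$ and hence bounded below by some $\delta>0$, giving $A_\delta\subset V$. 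With this adjustment your argument for $(iv)$ is complete.
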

We shall also need the following quantitative deformation lemma given in \cite[Theorem 3.11]{ms}. For $m\in \R$, set $I|_{S_c}^{m}:=\{u \in S_c \mid I(u) \leq m\}$.

\begin{lemma}\label{lem4.7} Suppose $I|_{S_c} \in C^{1}(S_c)$ satisfies (PS) condition. Let $m \in \mathbb{R}, \bar{\varepsilon}>0$ be given and let $N$ be any neighborhood of $K^{m}$. Then there exist a number $\varepsilon \in( 0, \bar{\varepsilon})$ and a continuous 1-parameter family of homeomorphisms $\eta(t,\cdot)$ of $S_c, 0 \leq t<\infty$, with the properties\\
(i) $\eta(t, u)=u$, if $t=0$, or $I|_{S_c}'(u)=0$, or $|I|_{S_c}(u)-m| \geq \bar{\varepsilon}$;\\
(ii) $I|_{S_c}(\eta(t, u))$ is non-increasing in $t$ for any $u \in S_c$;\\
(iii) $\eta\left(1 ,I|_{S_c}^{m+\varepsilon} \backslash N\right) \subset I|_{S_c}^{m-\varepsilon}$, and
$\eta\left(1, I|_{S_c}^{m+\varepsilon}\right) \subset I|_{S_c}^{m-\varepsilon} \cup N$;\\
(iv) $\eta(t, u)$ is odd in $u \in S_c$ for any $t \in[0,1]$ if $I|_{S_c}$ is even.
\end{lemma}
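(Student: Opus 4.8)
The plan is to prove Lemma \ref{lem4.7} by the classical pseudo-gradient flow method, adapted to the fact that $S_c$ is a $C^1$-Finsler manifold rather than a Hilbert sphere. First I would record that, by the $(PS)$ condition, the critical set $K^m=\{u\in S_c:I|_{S_c}(u)=m,\ (I|_{S_c})'(u)=0\}$ is compact, so the given neighborhood $N$ of $K^m$ may be taken open and (in the even case) symmetric. A standard compactness-contradiction argument then yields $\delta>0$ and $\va_0\in(0,\bar\va]$ such that
$$\|(I|_{S_c})'(u)\|\geq\delta\quad\text{for all }u\in\{u\in S_c:|I|_{S_c}(u)-m|\leq\va_0\}\setminus N.$$
Indeed, were this false one could extract a $(PS)_m$ sequence lying outside $N$, whose limit would be a critical point at level $m$ outside $N$, contradicting $K^m\subset N$.

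Next I would construct a locally Lipschitz pseudo-gradient vector field $V$ on the regular set $\wt S=\{u\in S_c:(I|_{S_c})'(u)\neq0\}$, obeying the usual estimates $\|V(u)\|\leq 2\|(I|_{S_c})'(u)\|$ and $\langle (I|_{S_c})'(u),V(u)\rangle\geq\|(I|_{S_c})'(u)\|^2$; such a field exists because $\wt S$, as a $C^1$-Finsler manifold, is paracompact. Using locally Lipschitz cutoff functions I would localize $V$ to a bounded field $W$ that coincides with a normalized multiple of $V$ on $\{|I|_{S_c}-m|\leq\va\}\setminus N$ (for a chosen $\va<\va_0$), vanishes on $N$ and outside the strip $\{|I|_{S_c}-m|<\bar\va\}$. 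The deformation $\eta(t,\cdot)$ is then the solution of the Cauchy problem $\frac{d}{dt}\eta=-W(\eta)$, $\eta(0,u)=u$; local Lipschitz continuity and boundedness of $W$ give global existence, uniqueness, and joint continuity of $\eta$ by the Cauchy--Lipschitz theory on Banach manifolds.

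Properties (i) and (ii) are then immediate: $\eta$ is the identity wherever $W$ vanishes, in particular at $t=0$, at critical points, and where $|I|_{S_c}-m|\geq\bar\va$; and along the flow $\frac{d}{dt}I|_{S_c}(\eta)=-\langle dI|_{S_c},W\rangle\leq0$. The quantitative part (iii) requires the calibration: starting from $u\in I|_{S_c}^{m+\va}\setminus N$, as long as the trajectory stays in $\{|I|_{S_c}-m|\leq\va\}\setminus N$ the energy derivative is bounded above by a strictly negative constant controlled by $\delta$, so choosing $\va$ small enough that the guaranteed drop over the unit time interval exceeds $2\va$ forces the orbit to cross the level $m-\va$ within time $1$ unless it has meanwhile entered $N$, which gives both inclusions. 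For (iv), the symmetry of $K^m$ lets me take $V$, the cutoffs, and hence $W$ odd, so $\eta(t,\cdot)$ inherits oddness by uniqueness of the flow.

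I expect the delicate point to be twofold. The first is making the pseudo-gradient construction and the flow genuinely work on the non-Hilbertian manifold $S_c\subset W^{1,p}(\R^N)$, where one must use the Finsler duality on $T_uS_c$ rather than a Riemannian metric, and confirm paracompactness so that a locally Lipschitz $V$ exists. The second is the calibration in step (iii): choosing $\va$ small relative to $\va_0,\bar\va$ and $\delta$ so that exactly one unit of flow time yields the prescribed energy drop while the trajectory avoids the forbidden set $N$.
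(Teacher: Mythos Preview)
The paper does not give its own proof of Lemma~\ref{lem4.7}; it is simply quoted from \cite[Theorem~3.11]{ms} (Struwe), so there is no in-paper argument to compare against. Your pseudo-gradient flow outline is exactly the method by which this deformation lemma is proved in that reference: compactness of $K^m$ from $(PS)$, the lower bound $\delta$ on $\|(I|_{S_c})'\|$ on the strip away from $N$, a locally Lipschitz pseudo-gradient field on the Finsler manifold $S_c$, cutoff localization, and the flow of the resulting bounded field.

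One point deserves tightening. As written, you let $W$ vanish on all of $N$ while equalling a normalized pseudo-gradient on $\{|I|_{S_c}-m|\leq\va\}\setminus N$; this is not locally Lipschitz across $\partial N$, and more importantly your argument for the \emph{first} inclusion in (iii) only shows the orbit either reaches level $m-\va$ \emph{or} enters $N$, which does not yield $\eta(1,I|_{S_c}^{m+\va}\setminus N)\subset I|_{S_c}^{m-\va}$. The standard remedy (as in Struwe) is to take nested neighborhoods $N_\rho\subset N_{2\rho}\subset N$ of $K^m$, cut off $W$ only on $N_\rho$, and use a length-versus-energy estimate to show that an orbit starting outside $N$ cannot linger in the annulus $N_{2\rho}\setminus N_\rho$ long enough to avoid dropping below $m-\va$. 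With that adjustment your plan is complete and matches the cited source.
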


With the help of Proposition $2.2$ and Lemma $2.3$ and arguing as in the beginning of the proof
of \cite[Theorem 2.1]{jssl}, we have the following Lemma.

\begin{lemma}\label{lem4.8}Let $I: E \rightarrow \mathbb{R}$ be an even functional of class $C^{1}$. Assume that $I|_{S_c}$ is bounded from below and satisfies the $(P S)_{m}$ condition for all $m<0$, and that $\Gamma_{k} \neq \emptyset$ for each $k \in \mathbb{N}$. Then a sequence of minimax values $-\infty<m_{1} \leq m_{2} \leq \cdots \leq m_{k} \leq \cdots$ can be defined as follows:
$$m_{k}:=\inf _{A \in \Gamma_{k}} \sup _{u \in A} I(u), \  k \geq 1,$$
and the following statements hold.\\
(i) $m_{k}$ is a critical value of $I|_{S_c}$ provided $m_{k}<0$.\\
(ii) Denote by $K^{m}$ the set of critical points of $I|_{S_c}$ at a level $m \in \mathbb{R}$. If
$$m_{k}=m_{k+1}=\cdots=m_{k+l-1}=: m<0 \quad \text { for some } k, l \geq 1,$$
then $\mathcal{G}\left(K^{m}\right) \geq l$. In particular, $I|_{S_c}$ has infinitely many critical points at the level $m$ if $l \geq 2$.\\
(iii) If $m_{k}<0$ for all $k \geq 1$, then $m_{k} \to 0^{-}$as $k \to \infty$.
\end{lemma}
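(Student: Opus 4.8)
The plan is to run the classical Rabinowitz-type genus minimax scheme, feeding the genus calculus of Lemma~\ref{lem4.6} into the equivariant deformation lemma (Lemma~\ref{lem4.7}). First I would record the elementary structural facts. Since $\mathcal{G}(A)\ge k+1$ implies $\mathcal{G}(A)\ge k$, we have $\Gamma_{k+1}\subset\Gamma_k$, hence $m_k\le m_{k+1}$; and since $\sup_A I\ge\inf_{S_c}I>-\infty$ for every $A\in\Gamma_1$, the sequence is bounded below, so $-\infty<m_1\le m_2\le\cdots$. The one structural constraint to keep in mind throughout is that Lemma~\ref{lem4.7} (and the compactness of the critical set) is only available at levels $m<0$, where $(PS)_m$ holds; this is precisely why each conclusion carries the hypothesis $m_k<0$ and why part (iii) can only assert convergence to $0^-$.

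For part (i) I would argue by contradiction: if $m:=m_k<0$ is regular, then $K^m=\emptyset$, and applying Lemma~\ref{lem4.7} with $N=\emptyset$ produces $\varepsilon>0$ and, since $I$ is even, an odd homeomorphism $\eta(1,\cdot)$ with $\eta(1,I|_{S_c}^{m+\varepsilon})\subset I|_{S_c}^{m-\varepsilon}$. By definition of the infimum there is $A\in\Gamma_k$ with $A\subset I|_{S_c}^{m+\varepsilon}$; since $\eta(1,\cdot)$ is an odd homeomorphism its inverse is odd continuous, so $\mathcal{G}(\eta(1,A))=\mathcal{G}(A)\ge k$ and $\eta(1,A)\in\Gamma_k$. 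Then $m_k\le\sup_{\eta(1,A)}I\le m-\varepsilon$, a contradiction.

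For part (ii), assume $m_k=\cdots=m_{k+l-1}=:m<0$ and suppose $\mathcal{G}(K^m)\le l-1$. The set $K^m$ is compact (any sequence in it is a $(PS)_m$ sequence, hence converges up to a subsequence to a point of $K^m$), so by Lemma~\ref{lem4.6}(iv) it admits an open symmetric neighborhood $N$ with $\mathcal{G}(N)=\mathcal{G}(K^m)\le l-1$. Feeding this $N$ into Lemma~\ref{lem4.7} gives $\varepsilon,\eta$ with $\eta(1,I|_{S_c}^{m+\varepsilon}\setminus N)\subset I|_{S_c}^{m-\varepsilon}$. Choosing $A\in\Gamma_{k+l-1}$ with $A\subset I|_{S_c}^{m+\varepsilon}$ and setting $B=\overline{A\setminus N}$, the subadditivity of Lemma~\ref{lem4.6}(ii) yields $\mathcal{G}(B)\ge\mathcal{G}(A)-\mathcal{G}(N)\ge k$, so $B\in\Gamma_k$; moreover $B\subset I|_{S_c}^{m+\varepsilon}\setminus N$ because $A$ is closed and $N$ is open. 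Deforming, $\eta(1,B)\in\Gamma_k$ lies in $I|_{S_c}^{m-\varepsilon}$, forcing $m_k\le m-\varepsilon$, again a contradiction. Thus $\mathcal{G}(K^m)\ge l$, and when $l\ge2$ Lemma~\ref{lem4.6}(i) makes $K^m$ infinite.

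Finally, for part (iii), monotonicity and the standing hypothesis give $m_k\uparrow\bar m$ for some $\bar m\le0$. I would show $\bar m=0$ by contradiction: if $\bar m<0$, then $K^{\bar m}$ is compact with finite genus $j$; take an open symmetric neighborhood $N$ with $\mathcal{G}(N)=j$ and apply Lemma~\ref{lem4.7} at level $\bar m$ to obtain $\varepsilon,\eta$. Pick $k$ with $m_k>\bar m-\varepsilon$ and choose $A\in\Gamma_{k+j}$ with $A\subset I|_{S_c}^{\bar m+\varepsilon}$ (possible since $m_{k+j}\le\bar m$); then $B=\overline{A\setminus N}$ satisfies $\mathcal{G}(B)\ge(k+j)-j=k$ and $\eta(1,B)\subset I|_{S_c}^{\bar m-\varepsilon}$, so $m_k\le\bar m-\varepsilon$, contradicting the choice of $k$. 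The main obstacle, and the only genuinely delicate point, is exactly this confinement to negative levels: all the deformation and compactness machinery fails at the threshold $0$, so the band argument must be arranged so that every invocation of Lemma~\ref{lem4.7} occurs strictly below $0$.
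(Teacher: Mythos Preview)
Your proposal is correct and follows essentially the same approach as the paper: the genus calculus of Lemma~\ref{lem4.6} combined with the equivariant deformation Lemma~\ref{lem4.7} is used to derive the standard contradictions in each part. The only cosmetic differences are that the paper absorbs (i) into (ii) by noting it is the case $l=1$, and the paper works explicitly with the closed neighborhood $\overline{K^m_{3\delta}}$ (so that Lemma~\ref{lem4.6}(ii) applies verbatim to closed symmetric sets), whereas you invoke an open neighborhood $N$; you should phrase this with the closure to match the hypotheses of Lemma~\ref{lem4.6}(ii), but the substance of the argument is identical.
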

\begin{proof}[\bf Proof]  Observing that Item (i) is a special case of Item (ii) when $l=1$, it is enough to prove Item (ii).
 We show by contradiction and assume that $\mathcal{G}(K^{m}) \leq l-1$.
It is easy to see that $K^{m} \in \Sigma(S_c)$ and $K^{m}$ is compact by virtue of the $(P S)_{m}$ condition. Thus  Lemma \ref{lem4.6} (iv) implies that there exists $\delta>0$ such that
$K_{3 \delta}^{m} \subset S_c$ and
$$\mathcal{G}(\overline{K_{3 \delta}^{m}})=\mathcal{G}(K^{m}) \leq l-1.$$
We remark here that $\overline{K_{3 \delta}^{m}}=\emptyset$ if $K^{m}=\emptyset$. If we set $N=K_{3 \delta}^{m}$, then, by using Lemma \ref{lem4.7}, there exist a number $\varepsilon>0$ and a mapping $\eta \in C([0,\infty)\times S_c, S_c)$ such that
$$
\eta\left(1, I|_{S_c}^{m+\varepsilon}\backslash N\right) \subset I|_{S_c}^{m-\varepsilon} \quad \text { and } \quad
\eta(t,\cdot)\ \text { is odd for all }\ t\in[0,\infty).
$$
Choose $A \in \Gamma_{k+l-1}$ such that $\sup_{u \in A} I(u) \leq m+\varepsilon$. It is clear that $A \backslash K_{3\delta}^{m} \subset I|_{S_c}^{m+\varepsilon}\backslash K_{3\delta}^{m}$ and hence
$$
\eta(1, A \backslash K_{3\delta}^{m}) \subset \eta(1, I|_{S_c}^{m+\varepsilon}\backslash K_{3\delta}^{m})
\subset I|_{S_c}^{m-\varepsilon}.
$$
On the other hand, since $\mathcal{G}(A \backslash K_{3 \delta}^{m}) \geq \mathcal{G}(A)-\mathcal{G}(K_{3\delta}^{m}) \geq k$ by Lemma \ref{lem4.6} (ii), we derive $A \backslash K_{3\delta}^{m} \in \Gamma_{k}$ and then $\eta(1, A \backslash K_{3 \delta}^{m}) \in \Gamma_{k}$. Now, by the definition of $m_{k}$, we get a contradiction:
$$
m=m_{k} \leq \sup _{u \in \eta(1, A \backslash K_{3\delta}^{m})} I(u) \leq m-\varepsilon.
$$
Thus $\mathcal{G}\left(K^{m}\right) \geq l$. In view of Lemma \ref{lem4.6} (i), we complete the proof of Item (ii).

To prove Item (iii), we assume by contradiction that there exists $m<0$ such that $m_{k} \leq m$ for all $k \geq 1$ and $m_{k} \to m$ as $k \to \infty$. Making use the $(P S)_{m}$ condition again, $K^{m}$ is a (symmetric) compact set. Thus, Lemma \ref{lem4.6} (iv) leads to there exists $\delta>0$ such that
$$\mathcal{G}(\overline{K_{3 \delta}^{m}})=\mathcal{G}(K^{m})=: q<\infty.$$
Let $N:=K_{3\delta}^{m} \subset S_c$. Then we know from Lemma \ref{lem4.7} that there exist $\varepsilon>0$ and a mapping $\eta \in C([0,\infty) \times S_c, S_c)$ such that $\eta(1, I|_{S_c}^{m+\varepsilon}\backslash K_{3\delta}^{m}) \subset I|_{S_c}^{m-\varepsilon}$ and $\eta(t, \cdot)$ is odd for any $t \in[0,1]$. Choose $k \geq 1$ large enough such that $m_k>m-\varepsilon$ and take $A \in \Gamma_{k+q}$ such that $\sup_{u\in A}I(u) \leq m_{k+q}+\varepsilon$. Noting that $m_{k+q} \leq m$, we have $A\backslash K_{3\delta}^{m} \subset I|_{S_c}^{m+\varepsilon} \backslash K_{3\delta}^{m}$ and thus
$$
\eta(1, A \backslash K_{3\delta}^{m}) \subset \eta(1, I|_{S_c}^{m+\varepsilon} \backslash K_{3\delta}^{m}) \subset I|_{S_c}^{m-\varepsilon}.
$$
On the other hand, since $\mathcal{G}(A \backslash K_{3\delta}^{m}) \geq \mathcal{G}(A)-\mathcal{G}(K_{3 \delta}^{m}) \geq k$, we have $A \backslash K_{3\delta}^{m} \in \Gamma_{k}$ and then $\eta(1, A \backslash K_{3\delta}^{m}) \in \Gamma_{k}$. Hence,
$$
m_{k} \leq \sup _{u \in \eta(1, A \backslash K_{3\delta}^{m})} I(u) \leq m-\varepsilon,
$$
for we chosen $k$ large enough such that $m_{k}>m-\varepsilon$, which is impossible. Thus $c_{k} \to 0^{-}$as $k \to \infty$.
\end{proof}

Now, for $\varepsilon>0$, let us introduce the set
$$B_\varepsilon=\{u\in S_c:\ J_T(u)\leq-\varepsilon\}\subset E,$$
which is a closed symmetric subset of $S_c$, because $J_T$ is even and continuous.

\begin{lemma}\label{lem4.9}
Let $n\in \N^\ast$, there exists $\varepsilon(n)>0$ such that $\mathcal {G}(B_\varepsilon)\geq n$
for any $0<\varepsilon\leq\varepsilon(n)$.
\end{lemma}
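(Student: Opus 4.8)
The plan is to build, for each $n$, an odd continuous map from the sphere $\mathbb{S}^{n-1}$ into $B_\varepsilon$ and then invoke the genus lower bound in Lemma~\ref{lem4.6}(iii). First I would fix an $n$-dimensional subspace $V_n\subset E$ of radial functions and consider its $L^p$-sphere
$$S_c^{V_n}=\{u\in V_n:\ |u|_p=c\}.$$
Since all norms on the finite-dimensional space $V_n$ are equivalent, $S_c^{V_n}$ is compact and the radial projection provides an odd homeomorphism $\iota:\mathbb{S}^{n-1}\to S_c^{V_n}$. Compactness together with the positivity and continuity of $u\mapsto|u|_q$ on $S_c^{V_n}$ then furnish uniform constants $M_1:=\sup_{S_c^{V_n}}|\nabla u|_p^p<\infty$ and $m_0:=\inf_{S_c^{V_n}}|u|_q^q>0$.

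Next I would push this sphere far down along the fiber map. For $u\in S_c^{V_n}$ and $t\in\R$, discarding the nonnegative $|u|_{p^\ast}$-term and using the bounds above,
$$\Psi_u^{\mu}(t)=\frac{e^{pt}}{p}|\nabla u|_p^p-\frac{e^{p^\ast t}}{p^\ast}|u|_{p^\ast}^{p^\ast}-\mu\frac{e^{\gamma_q q t}}{q}|u|_q^q\leq e^{\gamma_q q t}\Big(\frac{M_1}{p}e^{(p-\gamma_q q)t}-\frac{\mu m_0}{q}\Big).$$
Because $p<q<p+p^2/N$ forces $0<\gamma_q q<p$, the exponent $p-\gamma_q q$ is positive, so as $t\to-\infty$ the bracket tends to $-\mu m_0/q<0$. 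Hence there is a single $t_0<0$, depending only on $n$ (through $M_1,m_0$) and not on the individual $u$, for which simultaneously $\Psi_u^{\mu}(t_0)\leq-\frac{\mu m_0}{2q}e^{\gamma_q q t_0}=:-\varepsilon(n)<0$ and $|\nabla u^{t_0}|_p=e^{t_0}|\nabla u|_p\leq e^{t_0}M_1^{1/p}<R_0$, the latter achieved by taking $t_0$ more negative if necessary. The second inequality places $u^{t_0}$ in the region where $J_T$ agrees with $J$, so by Lemma~\ref{lem4.5}(i) one has $J_T(u^{t_0})=J(u^{t_0})=\Psi_u^{\mu}(t_0)\leq-\varepsilon(n)$.

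Finally I would set $\psi:=(\cdot)^{t_0}\circ\iota$. The dilation $u\mapsto u^{t_0}$ is continuous by Lemma~\ref{lem2.3}, maps $S_c$ into $S_c$ since $|u^{t_0}|_p=|u|_p$, and is odd because $(-u)^{t_0}=-u^{t_0}$; thus $\psi:\mathbb{S}^{n-1}\to S_c$ is odd and continuous. Moreover, for every $0<\varepsilon\leq\varepsilon(n)$ we have $J_T(\psi(\sigma))\leq-\varepsilon(n)\leq-\varepsilon$, so $\psi$ actually takes values in $B_\varepsilon$. Lemma~\ref{lem4.6}(iii) then gives $\mathcal{G}(B_\varepsilon)\geq n$ for all such $\varepsilon$, which is exactly the claim.

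I expect the only delicate point to be the uniformity of $t_0$ over the whole compact sphere $S_c^{V_n}$: it is essential that $M_1$ and $m_0$ be chosen once for all of $S_c^{V_n}$, so that a single threshold $t_0$, and hence a single level $\varepsilon(n)$, works for every point in the image, and that $t_0$ be negative enough to land inside $\{|\nabla u|_p<R_0\}$ where the truncation is inactive so that $J_T=J$.
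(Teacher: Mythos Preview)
Your argument is correct and follows the same overall strategy as the paper: build an odd continuous image of $\mathbb{S}^{n-1}$ inside $B_\varepsilon$ and invoke Lemma~\ref{lem4.6}(iii). The implementation, however, differs. The paper chooses $n$ radial functions $u_1,\dots,u_n\in C_0^\infty(\R^N)$ with pairwise \emph{disjoint supports}, normalized so that $|u_j|_p=c$ and $|\nabla u_j|_p=\rho<R_0$; the disjointness makes every $L^s$-norm additive on the span, so for $v=\sum_j s_j u_j^t$ with $\sum_j |s_j|^p=1$ one gets $|v|_p=c$, $|\nabla v|_p=e^t\rho$, and an explicit upper bound for $J_T(v)$ in terms of the infima $\alpha_n=\inf\{|w|_{p^\ast}^{p^\ast}:w\in E_n,\ |\nabla w|_p=1\}$ and $\beta_n=\inf\{|w|_q^q:w\in E_n,\ |\nabla w|_p=1\}$. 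Your version replaces this explicit algebra by a compactness argument on the $L^p$-sphere of an arbitrary finite-dimensional $V_n\subset E$, extracting uniform bounds $M_1$ and $m_0$ and then a single $t_0$ that works for the whole sphere. This is cleaner and avoids the need for disjoint supports, while the paper's choice buys an explicit formula for $J_T$ on $\Upsilon_n(t)$ but is otherwise equivalent. Both routes hinge on the same key point: $0<\gamma_q q<p$, so the $|u|_q$-term dominates as $t\to-\infty$, driving $J=J_T$ below any prescribed negative level uniformly over the finite-dimensional sphere.
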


\begin{proof}[\bf Proof]
For each $n\in\N^\ast$,
we can find $n$ functions $u_1,u_2,\ldots, u_n\in C_0^\infty(\R^N)$ of linearly independent with the ${\rm supp}u_i\cap {\rm supp} u_j=\emptyset$ for $i\neq j$. The $n$-dimensional subspace $E_n$ is defined by $E_n = {\rm span}\{u_1,u_2,\ldots, u_n\}$ equipped with the norm of $E$.
Moreover, there holds $|\nabla u_j|_p=\rho<R_0$
and $|u_j|_p=c$, and hence $\|u_j\|=(\rho^p+c^p)^{1/p}$.  Furthermore, for $t\in\R$, we introduce the set
$$\Upsilon_n(t)=\{s_1u_1^t+s_2u_2^t+\cdots+s_nu_n^t:\ s_1^p+s_2^p+\cdots+s_n^p=1\}.$$
It is easy to verify that there exists a homomorphism between $\Upsilon_n(t)$
and the sphere $\mathcal {D}(t)=\{(y_1,y_2,\ldots,y_n)\in\R^n:\ y_1^p+y_2^p+\cdots+y_n^p=
e^{pt}\rho^p+c^p\}$ in $\R^n$ for $t\in\R$.
As a consequence, by the properties of genus, one has $\mathcal {G}(\Upsilon_n(t))=n$.
Let $s_1^p+s_2^p+\cdots+s_n^p=1$, $v=s_1u_1^t+s_2u_2^t+\cdots+s_nu_n^t\in \Upsilon_n(t)$ with $t<0$, there holds $|\nabla v|_p=e^t\rho<R_0$.
At this point, we observe that
$$J_T(v)=J(v)=\frac{e^{pt}}{p} \rho^p
-\frac{e^{p^\ast t}}{p^\ast}\rho^{p^\ast}\int_{\R^N}\left|\frac{u}{\rho}\right|^{p^\ast}
-\frac{\mu e^{q\gamma_qt}}{q}\rho^q\int_{\R^N}\left|\frac{u}{\rho}\right|^q,$$
where $u=s_1u_1+s_2u_2+\cdots+s_nu_n$.
On the other hand, let us define
$$\alpha_n=\inf\{|w|_{p^\ast}^{p^\ast}:\ w\in E_n,\ |\nabla w|_p=1\}>0$$
and
$$\beta_n=\inf\{|w|_q^q:\ w\in E_n,\ |\nabla w|_p=1\}>0.$$
Thus, we have
$$J_T(v)\leq\frac{e^{pt}}{p} \rho^p
-\frac{e^{{p^\ast}t}}{p^\ast}\rho^{p^\ast}\alpha_n
-\frac{\mu e^{q\gamma_qt}}{q}\rho^q\beta_n.$$
Note that $0<q\gamma_q<p$, there exist $\varepsilon(n)>0,\ t_n<0$ such that
for $0<\varepsilon\leq\varepsilon(n)$,
$$J_T(v)\leq -\varepsilon,\ \ {\rm for\ all }\ v\in \Upsilon_n(t_n).$$
Hence, $\Upsilon_n(t_n)\subset B_\va$ and
$\mathcal {G}(B_\va)\geq\mathcal {G}(\Upsilon_n(t_n))=n$.
\end{proof}
\begin{proof}[{\bf Proof of Theorem \ref{the1.1}\ (Part II)}]

We set $\Gamma_k=\{A\in \Sigma(S_c) : \mathcal {G}(A)\geq k\}$
for $k\geq 1$. By Lemma \ref{lem4.9}, there exists $\va(k)>0$
such that $B_{\va(k)}\in\Gamma_k$, where
$B_{\va(k)}=\{u\in S_c : J_T(u)\leq -\va(k)\}$. Then
$\Gamma_k\neq \emptyset$. Recall that $J_T$ is bounded
from below and, by Lemma \ref{lem4.5}, $J_T$ satisfies the
$(PS)_m$ condition on $S_c$ at the level $m<0$. We define
the minimax values
$$
m_k=\inf_{A\in\Gamma_k}\sup_{u\in A}J_T(u),\ \ \text{for}\ k\geq 1.
$$
Then $-\infty<m_1\leq m_2\leq\cdots\leq m_k\leq\cdots<0$.
By Lemma \ref{lem4.8}, $J_T|_{S_c}$ has a critical point at
the level $m_k$ and $m_k\to0^-$ as $k\to\infty$. Since
$J_T=J$ in a small neighborhood of $u$ provided that
$J_T(u)<0$, these critical points of $J_T|_{S_c}$ are indeed
critical points of $J|_{S_c}$. The proof is complete.

\end{proof}

\subsection{Another positive normalized
solution of mountain pass type}

Recall that
$$\varphi_c(t)=\frac{1}{p}t^{p-\gamma_q q}
-\frac{S^{-\frac{p^\ast}{p}}}{p^\ast}t^{p^\ast-\gamma_q q}
-\frac{\mu}{q}C(q)c^{(1-\gamma_q)q}.$$

\begin{lemma}\label{lem4.10}
Let $c_1>0$ and $t_1>0$ be such that $\varphi_{c_1}(t_1)\geq 0$.
If $c_2\in(0,c_1]$, then
$$\varphi_{c_2}(t)\geq 0,\ \ \text{for}\ t\in\Big[\frac{c_2}{c_1}t_1,t_1\Big].$$
\end{lemma}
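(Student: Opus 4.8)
The plan is to exploit the fact that the parameter $c$ enters $\varphi_c$ only through the subtracted constant term $\frac{\mu}{q}C(q)c^{(1-\gamma_q)q}$, while the two variable terms carry the exponents $a:=p-\gamma_q q$ and $b:=p^\ast-\gamma_q q$. Since we are in the regime $p<q<p+p^2/N$ we have $0<\gamma_q q<p$, hence $0<a<b$; moreover the exponent of $c$ is $(1-\gamma_q)q=q-\gamma_q q=:\nu$, and because $p<q<p^\ast$ the three exponents obey $a<\nu<b$. I would reparametrize the target interval by writing $t=\theta t_1$ with $\theta\in[\sigma,1]$, where $\sigma:=c_2/c_1\in(0,1]$, so that proving the claim reduces to the single scalar inequality $\frac{1}{p}\theta^a t_1^a-\frac{S^{-p^\ast/p}}{p^\ast}\theta^b t_1^b\ge\frac{\mu}{q}C(q)c_2^{\nu}$ for every $\theta\in[\sigma,1]$.

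First I would factor $\theta^a$ out of the variable part. Because $\theta\le 1$ and $a<b$ we have $\theta^a\ge\theta^b$, which gives the lower bound $\frac{1}{p}\theta^a t_1^a-\frac{S^{-p^\ast/p}}{p^\ast}\theta^b t_1^b\ge\theta^a\big(\frac{1}{p}t_1^a-\frac{S^{-p^\ast/p}}{p^\ast}t_1^b\big)$, the difference of the two sides being $\frac{S^{-p^\ast/p}}{p^\ast}t_1^b(\theta^a-\theta^b)\ge 0$. The hypothesis $\varphi_{c_1}(t_1)\ge 0$ says exactly that the bracket is at least $\frac{\mu}{q}C(q)c_1^{\nu}\ge 0$, so the right-hand side is bounded below by $\theta^a\,\frac{\mu}{q}C(q)c_1^{\nu}$.

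It then remains to absorb the scaling factor, i.e. to verify $\theta^a c_1^\nu\ge c_2^\nu=\sigma^\nu c_1^\nu$, that is $\theta^a\ge\sigma^\nu$. This is where the exponent ordering is decisive: from $\theta\ge\sigma$ and $a>0$ we get $\theta^a\ge\sigma^a$, and from $0<\sigma\le 1$ together with $a\le\nu$ (equivalently $p\le q$) we get $\sigma^a\ge\sigma^\nu$; chaining these yields $\theta^a\ge\sigma^\nu$ and closes the estimate, since $\frac{\mu}{q}C(q)\sigma^\nu c_1^\nu=\frac{\mu}{q}C(q)c_2^\nu$. The only genuine subtlety, and the step I would be most careful about, is the choice of which power of $\theta$ to factor out: factoring $\theta^a$ succeeds precisely because $a\le\nu$, whereas the seemingly symmetric choice of factoring $\theta^b$ would demand $b\le\nu$, i.e. $p^\ast\le q$, which is false. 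Everything else is a routine monotonicity computation, and no compactness or variational input is needed.
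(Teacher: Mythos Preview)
Your argument is correct. Both you and the paper reduce matters to the exponent ordering $a:=p-\gamma_q q<\nu:=(1-\gamma_q)q<b:=p^\ast-\gamma_q q$, but the strategies differ. The paper only checks the two endpoints: at $t=t_1$ one has $\varphi_{c_2}(t_1)\ge\varphi_{c_1}(t_1)\ge0$ trivially, and at $t=\sigma t_1$ (with $\sigma=c_2/c_1$) one verifies $\varphi_{c_2}(\sigma t_1)\ge\sigma^{a}\varphi_{c_1}(t_1)\ge0$ by exactly the comparison $\sigma^{a}\ge\sigma^{\nu}$, $\sigma^{a}\ge\sigma^{b}$ that you isolate. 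Having nonnegativity at both endpoints, the paper then argues by contradiction: a negative value in the interior would force a local minimum of $\varphi_{c_2}$, contradicting that $\varphi_{c_2}$ has a unique critical point (its global maximum). Your approach instead runs the same scaling computation at every point $t=\theta t_1$ with $\theta\in[\sigma,1]$, obtaining $\varphi_{c_2}(\theta t_1)\ge\theta^{a}\varphi_{c_1}(t_1)\ge0$ directly; this is slightly more elementary because it bypasses the unimodality of $\varphi_{c_2}$ altogether, at the cost of carrying the free parameter $\theta$ through the estimate. The paper's route, on the other hand, makes transparent why the shape of $\varphi_c$ matters elsewhere in Section~4.
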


\begin{proof}[\bf Proof]
It is easy to verify that
$\varphi_{c_2}(t_1)\geq \varphi_{c_1}(t_1)\geq0$ and
$$\varphi_{c_2}\left(\frac{c_2}{c_1}t_1\right)\geq \left(\frac{c_2}{c_1}\right)^{p-\gamma_qq}\varphi_{c_1}(t_1)\geq0.$$
Suppose that the conclusion of the Lemma is not satisfied. We may assume $\varphi_{c_2}(t)<0$ for some $t\in \big(\frac{c_2}{c_1}t_1,t_1\big)$,
which therefore implies $\varphi_{c_2}$ has a local minimum point in
$\big(\frac{c_2}{c_1}t_1,t_1\big)$. This is a contradiction,
since the function $\varphi_{c_2}$ has a unique critical point
at which $\varphi_{c_2}$ achieves its global maximum. The proof
is completed.
\end{proof}

For $\mu>0$, let us define
$$c^\ast:=c^\ast(\mu)=\left(\frac{\alpha_q}{\mu}\right)^{\frac{1}{(1-\gamma_q)q}}.$$

\begin{lemma}\label{lem4.11}
$(i)$ The map $c\in(0,c^\ast)\mapsto m(c)\in\R$ is continuous.\\
$(ii)$ If $c_0\in(0,c^\ast)$, then
$$m(c_0)\leq m(c)+m\left(\sqrt[p]{c_0^p-c^p}\right),\ \ \text{for}\ c\in(0,c_0).$$
Moreover, if $m(c)$ or $m\big(\sqrt[p]{c_0^p-c^p}\big)$ is achieved, then the above
inequality is strict.
\end{lemma}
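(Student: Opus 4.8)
The plan is to handle both parts with scaling arguments organized around the fiber map $\Psi_u$ (Lemma \ref{lem4.3}) and the identity $m(c)=\inf_{u\in D_{t^\ast}(c)}J(u)$ of Lemma \ref{lem4.4}, in which $t^\ast$ is a \emph{fixed} constant, independent of $c$. For $(i)$, fix $c\in(0,c^\ast)$ and $c_n\to c$. Given $u\in D_{t^\ast}(c)$ with $J(u)\le m(c)+\varepsilon$, the rescaled function $v_n=(c_n/c)u\in S_{c_n}$ obeys $|\nabla v_n|_p=(c_n/c)|\nabla u|_p\to|\nabla u|_p<t^\ast$, so $v_n\in D_{t^\ast}(c_n)$ for large $n$; since $J(\theta u)\to J(u)$ as $\theta\to1$, this gives $\limsup_n m(c_n)\le m(c)$. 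Conversely, choose $u_n\in D_{t^\ast}(c_n)$ with $J(u_n)\le m(c_n)+1/n$; as $m(c_n)<0$ we may assume $J(u_n)<0$, and then $J(u_n)\ge h(|\nabla u_n|_p)$ together with $|\nabla u_n|_p<t^\ast<R_1(c_n)$ forces $|\nabla u_n|_p<R_0(c_n)$. Since $R_0(c)$ is a simple zero of the function $h$ of Lemma \ref{lem4.1}, it depends continuously on $c$ and satisfies $R_0(c)<t^\ast$, so $(c/c_n)|\nabla u_n|_p<(c/c_n)R_0(c_n)\to R_0(c)<t^\ast$; hence $w_n=(c/c_n)u_n\in D_{t^\ast}(c)$ and $m(c)\le J(w_n)=J(u_n)+o(1)\le m(c_n)+o(1)$, yielding $\liminf_n m(c_n)\ge m(c)$.

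For $(ii)$, write $d=\sqrt[p]{c_0^p-c^p}$, so $c^p+d^p=c_0^p$. Fix $\varepsilon>0$ and take radial near-minimizers $u\in\cM$ with $|u|_p=c$, $J(u)\le m(c)+\varepsilon$ and $v\in\cM$ with $|v|_p=d$, $J(v)\le m(d)+\varepsilon$; after a cutoff (whose error is controlled by the decay of the profiles) we may assume $u,v$ have compact support. Translating the second profile, $w_R=u+v(\cdot-Re_1)$ has disjoint supports for $R$ large, so $|w_R|_p^p=c^p+d^p=c_0^p$, $P(w_R)=P(u)+P(v)=0$ and $J(w_R)=J(u)+J(v)$; thus $w_R$ lies on the full-space Poho\v zaev manifold at mass $c_0$. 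Because Schwarz symmetrization lowers $|\nabla\cdot|_p$ while preserving the $L^p$, $L^q$ and $L^{p^\ast}$ norms, the radial infimum defining $m(c_0)$ equals the infimum over all of $W^{1,p}(\R^N)$; hence $m(c_0)\le J(w_R)\le m(c)+m(d)+2\varepsilon$, and letting $\varepsilon\to0$ gives $m(c_0)\le m(c)+m(d)$.

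The strict inequality under an attainment hypothesis is the main obstacle, since the naive gluing only yields ``$\le$''. The plan is to combine the bound above, applied to a \emph{perturbed} split, with the strict scaling monotonicity of $m$. The latter follows from a direct fiber-map computation: for $\theta>1$ and $a>0$, writing the fiber of $\theta u$ and matching the $|\nabla\cdot|_p^p$- and $L^{p^\ast}$-terms to those of $u$ shows that the subtracted $L^q$-term acquires a factor $\theta^{q(1-\gamma_q)}>1$, so that $m(\theta a)\le m(a)-\tfrac{\mu}{q}\big(\theta^{q(1-\gamma_q)}-1\big)|w_a|_q^q<m(a)$ whenever $m(a)$ is attained at $w_a$. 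If, say, $m(d)$ is attained, one then estimates $m(c_0)\le m(c')+m(\theta d)$ with $d'=\theta d$ and $c'=\sqrt[p]{c_0^p-\theta^pd^p}<c$, using $(i)$ to absorb the increment $m(c')-m(c)$. The crux is to guarantee that for $\theta$ close to $1$ the scaling gain $\tfrac{\mu}{q}(\theta^{q(1-\gamma_q)}-1)|w_d|_q^q$ strictly dominates this increment; this quantitative balancing is where the explicit $c$-dependence of the geometry, encoded in Lemma \ref{lem4.10}, must be exploited, and it is the step I expect to require the most care.
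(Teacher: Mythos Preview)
Your argument for part $(i)$ is essentially the paper's, with the minor variant of invoking continuity of $c\mapsto R_0(c)$ (via the implicit function theorem at a simple zero) instead of the paper's Lemma~\ref{lem4.10}. Either works.

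For part $(ii)$ your route is very different from the paper's, and the difference matters. The paper never glues profiles. Instead it proves a single scaling inequality: for $\theta\in(1,c_0/c]$ and a near--minimizer $u\in D_{t^\ast}(c)$ with $J(u)<0$, one shows (using Lemma~\ref{lem4.10}) that $v=\theta u\in D_{t^\ast}(\theta c)$ and computes
\[
J(\theta u)=\frac{\theta^p}{p}|\nabla u|_p^p-\frac{\theta^{p^\ast}}{p^\ast}|u|_{p^\ast}^{p^\ast}-\frac{\mu\theta^{q}}{q}|u|_q^q<\theta^p J(u),
\]
since $\theta^{p^\ast},\theta^{q}>\theta^p$. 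Hence $m(\theta c)\le\theta^p m(c)$, \emph{strictly} whenever $m(c)$ is achieved (take $\varepsilon=0$). The subadditivity then drops out from the convex split
\[
m(c_0)=\frac{c^p}{c_0^p}\,m\!\Big(\frac{c_0}{c}\,c\Big)+\frac{c_0^p-c^p}{c_0^p}\,m\!\Big(\frac{c_0}{\sqrt[p]{c_0^p-c^p}}\,\sqrt[p]{c_0^p-c^p}\Big)\le m(c)+m\!\left(\sqrt[p]{c_0^p-c^p}\right),
\]
and the strictness is automatic.

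Your gluing/symmetrization scheme can be pushed through for the non-strict inequality, but you are missing the step that keeps the glued function in the right sublevel set: after translation one has $|\nabla w_R|_p^p=|\nabla u|_p^p+|\nabla v|_p^p<R_0(c)^p+R_0(d)^p$, and you still need this to lie below $(t^\ast)^p$ so that the symmetrized $w_R^\ast$ belongs to $D_{t^\ast}(c_0)$. This is exactly what Lemma~\ref{lem4.10} provides (it gives $R_0(c)\le\frac{c}{c_0}t^\ast$ and $R_0(d)\le\frac{d}{c_0}t^\ast$), but you did not invoke it. More seriously, your plan for the \emph{strict} inequality is a balancing argument between a scaling gain at mass $d$ and the continuity increment $m(c')-m(c)$; both quantities vanish as $\theta\to1$, and you do not show that the gain dominates. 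Since the paper's one-line computation $J(\theta u)<\theta^p J(u)$ yields the strict inequality for free, I would abandon the gluing for $(ii)$ and adopt the scaling argument.
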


\begin{proof}[\bf Proof]
$(i)$ Let $\{c_n\}\subset(0,c^\ast)$ be such that
$\lim_{n\to\infty}c_n=c\in(0,c^\ast)$. For $\va>0$ small,
there exists $u\in A_{t^\ast}(c)$ such that
$J(u)\leq m(c)+\va<m(c)+3\va<0$. Setting $u_n=c_n u/c$, we
have $|u_n|_p=c_n$ and $|\nabla u_n|_p<t^\ast$ for $n$ large,
which means that $u_n\in A_{t^\ast}(c_n)$ for $n$ large. Then,
since $J(u_n)\to J(u)$ as $n\to\infty$, we have
\begin{equation}\label{eq4.21}
m(c_n)\leq J(u_n)\leq J(u)+\va\leq m(c)+2\va
\end{equation}
for $n$ large.

For above $\va>0$, we see from \eqref{eq4.21} that there
exists $u_n\in A_{t^\ast}(c_n)$ such that
$$J(u_n)\leq m(c_n)+\va\leq m(c)+3\va<0$$
for $n$ large. Moreover, setting $v_n=cu_n/c_n$, it follows immediately that $|v_n|_2=c$.
We wish to obtain
\begin{equation}\label{eq4.22}
|\nabla v_n|_2<t^\ast
\end{equation}
and hence $v_n\in A_{t^\ast}(c)$. If this is valid, there holds
\begin{equation}\label{eq4.23}
m(c)\leq J(v_n)\leq J(u_n)+\va\leq m(c_n)+2\va
\end{equation}
for $n$ large. Combining \eqref{eq4.21} and \eqref{eq4.23},
we conclude that $m(c_n)\to m(c)$ as $n\to\infty$.

It remains to prove \eqref{eq4.22}. In the case $c_n\geq c$,
we can infer that
$$|\nabla v_n|_2=\frac{c}{c_n}|\nabla u_n|_2\leq|\nabla u_n|_2<t^\ast.$$
When $c_n< c$, it follows from Lemma \ref{lem4.10} and
$\varphi_c(t^\ast)>0$ that
$$\varphi_{c_n}(t)\geq0,\ \ \text{for}\ t\in\Big[\frac{c_n}{c}t^\ast,t^\ast\Big].$$
Consequently, on the basis of $J(u_n)<0$ and $u_n\in A_{t^\ast}(c_n)$, we
have $|\nabla u_n|_2<\frac{c_n}{c}t^\ast$ and henceforth
$$|\nabla v_n|_2=\frac{c}{c_n}|\nabla u_n|_2<t^\ast.$$

$(ii)$ Letting $c_0\in(0,c^\ast)$ and $c\in(0, c_0)$, we start showing that
\begin{equation}\label{eq4.24}
m(\theta c)\leq \theta^2 m(c),\ \ {\rm for}\ \theta\in \Big(1,\frac{c_0}{c}\Big],
\end{equation}
and if in addition $m(c)$ is achieved then the inequality is
strict. Indeed, for $\va>0$ small there exists $u\in A_{t^\ast}(c)$
such that
\begin{equation}\label{eq4.25}
J(u)\leq m(c)+\va<0.
\end{equation}
Exploiting the fact that $\varphi_{c_0}(t^\ast)\geq0$, we see from Lemma \ref{lem4.10}
that $\varphi_{c}(t)\geq0$ for $t\in[\frac{c}{c_0}t^\ast,t^\ast]$.
Since $J(u)<0$ and $u\in A_{t^\ast}(c)$, there must be
$|\nabla u|_p<\frac{c}{c_0}t^\ast$.
Furthermore, setting $v(x)=\theta u(x)$,
it is now easy to observe
that $|v|_p=\theta|u|_p=\theta c$
and $|\nabla v|_p=\theta|\nabla u|_p<t^\ast$, which means
that $v\in A_{t^\ast}(\theta c)$. Then, in view of \eqref{eq4.25},
\begin{align*}
m(\theta c)\leq J(v)=\frac{\theta^p}{p}|\nabla u|_p^p
-\frac{\theta^{p^\ast}}{{p^\ast}}|u|_{p^\ast}^{p^\ast}-\frac{\mu}{q}\theta^q|u|_q^q
< \theta^p J(u)\leq \theta^p(m(c)+\va).
\end{align*}
Note that $\va>0$ is arbitrary, it is clear $m(\theta c)\leq\theta^p m(c)$.
Let us emphasize that here, if $m(c)$ is achieved, then one can choose $\va=0$ in \eqref{eq4.25}
and henceforth the strict inequality holds.

Finally, applying \eqref{eq4.24}, we deduce that
$$m(c_0)
=\frac{c^p}{c_0^p}m\left(\frac{c_0}{c}c\right)
+\frac{c_0^p-c^p}{c_0^p}m\left(\frac{c_0}{\sqrt[p]{c_0^p-c^p}}\sqrt[p]{c_0^p-c^p}\right)
\leq m(c)+m\left(\sqrt[p]{c_0^p-c^p}\right)$$
with a strict inequality if $m(c)$ or $m\left(\sqrt[p]{c_0^p-c^p}\right)$ is achieved.
\end{proof}

\begin{remark}\label{rem4.1}
As a consequence of Lemma \ref{lem4.11}\,$(ii)$,
$c\in(0,c^\ast)\mapsto m(c)\in\R$ is decreasing.
\end{remark}

In the following, we are devoted to the existence of second
critical point for $J|_{S_c}$.

\begin{lemma}\label{lem4.12}
For $u\in S_c$ with $J(u)<m(c)$, the value $t_u$ given
in Lemma \ref{lem4.3} is negative.
\end{lemma}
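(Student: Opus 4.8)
The plan is to read off the conclusion directly from the precise shape of the fiber map $\Psi_u$ established in Lemma \ref{lem4.3}, combined with the identity $m(c)=\inf_{v\in\cM^+}J(v)$ from Lemma \ref{lem4.4}. The governing idea is that the hypothesis $J(u)<m(c)$ forces the value $\Psi_u(0)=J(u)$ strictly below the minimal energy on $\cM^+$, and this minimal energy is exactly the value $\Psi_u$ takes at its local minimum point $s_u$. Since $s_u$ minimizes $\Psi_u$ over the entire half-line $(-\infty,t_u]$, the origin cannot lie in that half-line, which is precisely the assertion $t_u<0$.

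First I would record the two numerical facts. On one hand, $\Psi_u(0)=J(u^0)=J(u)<m(c)$ by assumption. On the other hand, Lemma \ref{lem4.3}$(i)$ gives $u^{s_u}\in\cM^+$, so by Lemma \ref{lem4.4},
$$\Psi_u(s_u)=J(u^{s_u})\ge \inf_{v\in\cM^+}J(v)=m(c).$$
Combining these two estimates yields the key strict inequality $\Psi_u(0)<m(c)\le\Psi_u(s_u)$.

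Next I would invoke the monotonicity structure of $\Psi_u$. By Lemma \ref{lem4.3}, $\Psi_u$ has exactly two critical points $s_u<t_u$, with $s_u$ a local minimum and $t_u$ the global maximum; consequently $\Psi_u$ is decreasing on $(-\infty,s_u)$ and increasing on $(s_u,t_u)$. It follows that $s_u$ is the global minimizer of $\Psi_u$ over the half-line $(-\infty,t_u]$, so $\Psi_u(t)\ge\Psi_u(s_u)$ for every $t\le t_u$. Arguing by contradiction, if $t_u\ge 0$ then $0\in(-\infty,t_u]$, and the previous bound would force $\Psi_u(0)\ge\Psi_u(s_u)\ge m(c)$, contradicting $\Psi_u(0)<m(c)$. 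Therefore $t_u<0$.

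I expect no serious obstacle here; the only point requiring care is the justification that $s_u$ is the \emph{global} minimum of $\Psi_u$ on $(-\infty,t_u]$, rather than merely a local one. This follows because, by Lemma \ref{lem4.3}, $\Psi_u$ has no critical point in that range other than $s_u$ and is strictly monotone on each of the two subintervals $(-\infty,s_u)$ and $(s_u,t_u)$ it determines, so the value $\Psi_u(s_u)$ dominates from below on all of $(-\infty,t_u]$.
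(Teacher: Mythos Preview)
Your proof is correct and follows essentially the same idea as the paper's: both hinge on the inequality $\Psi_u(s_u)=J(u^{s_u})\ge m(c)$ (from $u^{s_u}\in\cM^+$ and Lemma~\ref{lem4.4}) together with the shape of $\Psi_u$ from Lemma~\ref{lem4.3}. The only cosmetic difference is that the paper argues indirectly via the second zero $d_u$ of $\Psi_u$ (showing $d_u\le 0$, hence $t_u<d_u\le 0$) and uses the characterization \eqref{eq4.4} through the set $\{s:|\nabla u^s|_p<R_0\}$, whereas you bypass $d_u$ and $R_0$ entirely by observing directly that $s_u$ is the global minimizer of $\Psi_u$ on $(-\infty,t_u]$; your route is slightly more streamlined but not substantively different.
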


\begin{proof}[\bf Proof]
Let $u\in S_c$ be such that $J(u)<m(c)$ and $s_u<c_u<t_u<d_u$
be as in Lemma \ref{lem4.3}. We shall show $d_u\leq 0$ and
thus $t_u<0$. Let us assume now, by contradiction, that $d_u>0$. It
follows from $\Psi_u(0)=J(u)<m(c)<0$ that $c_u>0$. Then making use of
Lemma \ref{lem4.3} leads to
\begin{align*}
m(c)
&>J(u)=\Psi_u(0)\geq\min_{s\in(-\infty, c_u)}\Psi_u(s)\\
&\geq\min\left\{J(u^s) : s\in \R,\ |\nabla u^s|_p<R_0\right\}
=J(u^{s_u})\geq m(c),
\end{align*}
which is absurd. The proof is completed.
\end{proof}

The first part of Theorem \ref{the1.1}  states that
there exists $u_\ast\in \cM^+$ such that $J(u_\ast)=m(c)<0$.
It is also clear that $J\big(u_\ast^t\big)<2m(c)$ for $t>0$ sufficiently
large. This implies, by Lemma \ref{lem2.3}, that the set
\begin{equation*}
\Gamma:=\Gamma(c,\mu)=\left\{\gamma\in C([0,1],S_c):
\gamma(0)\in \cM^+,\ \gamma(1)\in J^{2m(c)}\right\}
\end{equation*}
is not empty, where $J^{m(c)}=\{u\in S_c: J(u)\leq 2m(c)\}$. Now let us consider the associated minimax value
$$m^\ast:=m^\ast(c,\mu)
=\inf_{\gamma\in\Gamma}\sup_{t\in[0,1]}J(\gamma(t)).$$

\begin{lemma}\label{lem4.13}
There holds $m^\ast=\inf_{u\in \cM^-}J(u)>0$.
\end{lemma}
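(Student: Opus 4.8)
The plan is to establish the two assertions separately: first the uniform positivity $\inf_{u\in\cM^-}J(u)>0$, and then the identity $m^\ast=\inf_{u\in\cM^-}J(u)$ by proving the two matching inequalities. Throughout I would lean on the fibering description of $\cM^\pm$ from Lemma \ref{lem4.3} and on Lemma \ref{lem4.12}.

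For the positivity I would combine the comparison $\Psi_u(t)=J(u^t)\ge h(e^t|\nabla u|_p)$ coming from \eqref{eq4.1} with the structure of $\cM^-$. If $u\in\cM^-$, then $\Psi_u'(0)=0$ and $\Psi_u''(0)<0$, so by the uniqueness in Lemma \ref{lem4.3} the point $0$ must be the global maximum point $t_u$ of $\Psi_u$; hence $J(u)=\Psi_u(0)=\max_{t\in\R}\Psi_u(t)$. Since $|u|_p=c>0$ forces $|\nabla u|_p>0$, the quantity $e^t|\nabla u|_p$ sweeps out all of $(0,+\infty)$ as $t$ ranges over $\R$, and therefore $J(u)\ge\sup_{t\in\R}h(e^t|\nabla u|_p)=\max_{r>0}h(r)$. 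By Lemma \ref{lem4.1} the global maximum of $h$ is strictly positive and independent of $u$, which yields $\inf_{u\in\cM^-}J(u)\ge\max_{r>0}h(r)>0$.

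For the upper bound $m^\ast\le\inf_{\cM^-}J$ I would argue constructively. Given $u\in\cM^-$ we have $t_u=0$ and $s_u<c_u<0$, so the curve $\tau\mapsto u^{s_u+\tau(T-s_u)}$, continuous in $S_c$ by Lemma \ref{lem2.3}, joins $u^{s_u}\in\cM^+$ to $u^{T}$; since $\Psi_u(t)\to-\infty$ as $t\to+\infty$ we may fix $T$ so large that $J(u^T)\le 2m(c)$, whence this curve lies in $\Gamma$. Because $0$ is the global maximum point of $\Psi_u$ and $s_u<0<T$, the maximum of $J$ along the curve equals $\Psi_u(0)=J(u)$, so $m^\ast\le J(u)$; taking the infimum over $\cM^-$ gives the claim.

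The reverse inequality $m^\ast\ge\inf_{\cM^-}J$ is the heart of the matter and the main obstacle, as it requires showing that \emph{every} admissible path meets $\cM^-$. Here I would track the map $t\mapsto t_{\gamma(t)}$, continuous on $[0,1]$ since $u\mapsto t_u$ is $C^1$ by Lemma \ref{lem4.3}(iv) and $\gamma$ is continuous. At the initial endpoint $\gamma(0)\in\cM^+$ one has $s_{\gamma(0)}=0$, hence $t_{\gamma(0)}>0$, while at the terminal endpoint $J(\gamma(1))\le 2m(c)<m(c)$ forces $t_{\gamma(1)}<0$ by Lemma \ref{lem4.12}. The intermediate value theorem then provides $t_0\in(0,1)$ with $t_{\gamma(t_0)}=0$, which is precisely the statement that $0$ is the global maximum point of $\Psi_{\gamma(t_0)}$, i.e. $\gamma(t_0)\in\cM^-$. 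Consequently $\sup_{t}J(\gamma(t))\ge J(\gamma(t_0))\ge\inf_{\cM^-}J$, and passing to the infimum over $\gamma\in\Gamma$ yields $m^\ast\ge\inf_{\cM^-}J$. Combining the two inequalities with the positivity bound completes the proof; the points demanding care are the continuity of $t\mapsto t_{\gamma(t)}$ and the endpoint sign information, which is exactly where the characterization of $\cM^\pm$ and Lemma \ref{lem4.12} are indispensable.
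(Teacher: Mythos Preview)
Your proof is correct and for the identity $m^\ast=\inf_{\cM^-}J$ it matches the paper's argument essentially step for step: both directions use the fibering map from Lemma~\ref{lem4.3}, the endpoint sign information $t_{\gamma(0)}>0$, $t_{\gamma(1)}<0$ (the latter via Lemma~\ref{lem4.12}), and the intermediate value theorem to force each path to cross $\cM^-$, while the reverse inequality is obtained by building a path in $\Gamma$ from the dilations $u^t$ of a given $u\in\cM^-$.

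The one genuine difference is your treatment of the positivity $\inf_{\cM^-}J>0$. The paper simply invokes \cite[Lemma~5.7]{ns2}, whereas you give a self-contained quantitative bound: since $\Psi_u(t)\ge h(e^t|\nabla u|_p)$ and $0$ is the global maximizer of $\Psi_u$ for $u\in\cM^-$, you get $J(u)\ge\max_{r>0}h(r)>0$ by Lemma~\ref{lem4.1}. This is a cleaner and more informative argument than the external citation, as it yields a uniform explicit lower bound depending only on $c$ and $\mu$.
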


\begin{proof}[\bf Proof]
A similar argument to the proof of \cite[Lemma 5.7]{ns2} shows that $\inf_{u\in \cM^-}J(u)>0$.
For any $\gamma\in \Gamma$, we have $0=s_{\gamma(0)}<t_{\gamma(0)}$
and, due to Lemma \ref{lem4.12}, $t_{\gamma(1)}<0$. With the aid of Lemma \ref{lem4.3},
there exists $t_0=t_0(\gamma)\in(0,1)$ such that $t_{\gamma(t_0)}=0$, which implies
$\gamma(t_0)\in \mathcal {M}^-$ and necessarily
\begin{equation}\label{eq4.26}\sup_{t\in[0,1]}J(\gamma(t))\geq J(\gamma(t_0))
\geq\inf_{u\in \cM^-}J(u).\end{equation}
Clearly, $m^\ast\geq\inf_{u\in \cM^-}J(u)$. For any $u\in \cM^-$, we
know $s_u<0$ and there exists $t_1=t_1(u)>0$ such that
$J(u^{t_1})<2m(c)$. If we define $\gamma: [0,1]\to S_c$ by $\gamma(t)=u^{(1-t)s_u+tt_1}$,
then $\gamma\in \Gamma$ from Lemma \ref{lem2.3}, and by \eqref{eq4.26},
$$J(u)=\sup_{t\in[0,1]}J(\gamma(t))\geq m^\ast.$$
Hence, $\inf_{u\in \cM^-}J(u)\geq m^\ast$, concluding the proof.
\end{proof}

\begin{lemma}\label{lem4.14}
Assume that $N>2$ and $N^{2/3}<p<3$. Then
$m^\ast<m(c)+
\frac{1}{N}S^{N/p}$.
\end{lemma}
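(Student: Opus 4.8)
The plan is to combine the variational characterization of $m^\ast$ with a Brezis--Nirenberg-type competitor built from the ground state and a concentrating Sobolev bubble. By Lemma \ref{lem4.13} we have $m^\ast=\inf_{u\in\cM^-}J(u)$, and by the fibering analysis of Lemma \ref{lem4.3} every $w\in S_c$ produces a point $w^{t_w}\in\cM^-$ with $J(w^{t_w})=\max_{s\in\R}J(w^s)$; hence it suffices to exhibit one admissible configuration built on $u_\ast$ whose energy along the relevant dilation path stays strictly below $m(c)+\frac1N S^{N/p}$. I would take the positive ground state $u_\ast\in\cM^+$ from Theorem \ref{the1.1} (Part I), for which $J(u_\ast)=m(c)<0$, and superimpose on it the concentrating bubble $u_\va$ of Section 2. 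The first useful bookkeeping fact is that, since $p>N^{2/3}>N^{1/2}$ forces $p^2>N$ and thus $p<p^\ast(1-\frac1p)$, the estimate \eqref{eq2.4} with $s=p$ gives $|u_\va|_p^p=O(\va^{(N-p)/p})\to0$: the bubble sheds its $L^p$-mass, so after a harmless renormalization by $|u_\ast+u_\va|_p/c$ the configuration $u_\ast+u_\va$ can be kept on $S_c$.

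The core estimate is then to bound the maximal energy of this configuration. I would split the energy into three pieces: the ground-state part, which contributes $m(c)$; the critical bubble part, which by Lemma \ref{lem2.1} and \eqref{eq2.3} contributes at most $\frac1N S^{N/p}+O(\va^{(N-p)/p})$; and the interaction and subcritical terms. The decisive negative correction comes from the strict superadditivity of the critical term, namely from the cross contribution $\int|u_\ast+u_\va|^{p^\ast}-\int|u_\ast|^{p^\ast}-\int|u_\va|^{p^\ast}$, whose leading part is of the order of $\int u_\ast^{p^\ast-1}u_\va\sim u_\ast(0)^{p^\ast-1}|u_\va|_1$. Here \eqref{eq2.4} with $s=1$ (admissible because $N>2$ forces $N\ge3$ and hence $p>N^{2/3}>2N/(N+1)$) yields $|u_\va|_1=O(\va^{(N-p)/p^2})$, and since $p>1$ one has $\frac{N-p}{p^2}<\frac{N-p}{p}$: this gain dominates the gradient error $O(\va^{(N-p)/p})$. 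Thus for $\va$ small the maximal energy is $m(c)+\frac1N S^{N/p}$ minus a strictly positive quantity, which gives the claim and simultaneously places $m^\ast$ below the noncompactness level $\frac1N S^{N/p}$ of Lemma \ref{lem3.3}.

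The hard part will be the interaction analysis for the $p$-Laplacian. Unlike the case $p=2$, the gradient energy $|\nabla(u_\ast+u_\va)|_p^p$ has no exact binomial expansion, so the mixed terms between $\nabla u_\ast$ and $\nabla u_\va$ must be estimated through one-sided algebraic inequalities of the type \eqref{eq2.5}, treating $p\ge2$ and $1<p<2$ separately; the analogous control is needed for the critical and $L^q$ remainders. Making all these remainders subleading with respect to the interaction gain $\va^{(N-p)/p^2}$, and verifying that the optimization over the dilation parameter does not reintroduce the positive bubble error, is exactly what calibrates the hypotheses: the upper bound $p<3$ keeps the second-order remainders of the $p$-power negligible, while the lower bound $p>N^{2/3}$ together with $N>2$ guarantees that the mass defect, the cross term $|u_\va|_1$, and the subcritical integral $|u_\va|_q^q$ all have the admissible decay rates. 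I would therefore devote the bulk of the argument to these remainder estimates and conclude by reducing everything to a one-variable analysis of $s\mapsto J(w_\va^s)$.
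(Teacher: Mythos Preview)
Your overall strategy---building a competitor $u_\ast+\tau u_\va$, renormalizing onto $S_c$, and maximizing over the dilation---is exactly the paper's approach, and your observation that $p>N^{2/3}$ forces $|u_\va|_p^p=O(\va^{(N-p)/p})$ is correct. The genuine gap is your identification of the decisive negative correction.

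You claim the gain comes from the cross term $\int u_\ast^{p^\ast-1}u_\va\sim |u_\va|_1=O(\va^{(N-p)/p^2})$ in the critical power. But this term is \emph{exactly cancelled} by the first-order gradient cross term. Indeed, the Garcia Azorero--Peral Alonso expansion gives
\[
\tfrac{1}{p}|\nabla(u_\ast+\tau u_\va)|_p^p
\le \tfrac{1}{p}|\nabla u_\ast|_p^p+\tau\!\int|\nabla u_\ast|^{p-2}\nabla u_\ast\cdot\nabla u_\va+\tfrac{\tau^p}{p}|\nabla u_\va|_p^p+O(\va^{(N-p)/p}),
\]
and testing the equation for $u_\ast$ against $u_\va$ turns the middle term into $\tau\big(\int u_\ast^{p^\ast-1}u_\va+\mu\int u_\ast^{q-1}u_\va+\lambda^\ast\int u_\ast^{p-1}u_\va\big)$. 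The $\int u_\ast^{p^\ast-1}u_\va$ piece then cancels against the same cross term from $-\frac{1}{p^\ast}|u_\ast+\tau u_\va|_{p^\ast}^{p^\ast}$; the $\int u_\ast^{q-1}u_\va$ piece cancels against the linear cross term of the $L^q$-energy; and the $\lambda^\ast\int u_\ast^{p-1}u_\va$ piece cancels against the first-order correction coming from the rescaling $s=|v_{\va,\tau}|_p/c$ (using $\lambda^\ast c^p=\mu(\gamma_q-1)|u_\ast|_q^q$). This total cancellation of all $O(|u_\va|_1)$ terms is forced: $u_\ast$ is a constrained critical point, so the first variation of $J|_{S_c}$ in the direction of any admissible perturbation vanishes.

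What actually survives and wins is the \emph{other} cross term of the critical power, namely $-\int u_\ast(\tau u_\va)^{p^\ast-1}$, which is of order $\va^{(N-p)(p-1)/p^2}$ by \eqref{eq2.4} with $s=p^\ast-1$. This exponent is strictly smaller than $(N-p)/p$ (so it beats the bubble error) and strictly smaller than $2(N-p)/p^2$ precisely when $p<3$, which is where that hypothesis enters; $p<3$ also validates the elementary inequality \eqref{eq4.29*} used to control the quadratic remainder in the $p$-th power. So your proposal has the right architecture but the wrong engine: replace $\int u_\ast^{p^\ast-1}u_\va$ by $\int u_\ast\,u_\va^{p^\ast-1}$ as the source of strict negativity, and redo the exponent comparison accordingly.
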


\begin{proof}[\bf Proof]
Let $v_{\va,\tau}=u_\ast+\tau u_\va$, where $u_\va$ is given in
Section 2 and $\tau\geq 0$. Taking
into account that
$$w_{\va,\tau}(x)=s^{\frac{N-p}{p}}v_{\va,\tau}(sx),$$
we obtain that, by direct computations,
\begin{equation}\label{eq4.27}
|\nabla w_{\va,\tau}|_p^p=|\nabla v_{\va,\tau}|_p^p,\ \
|w_{\va,\tau}|_{p^\ast}^{p^\ast}=|v_{\va,\tau}|_{p^\ast}^{p^\ast},
\end{equation}
and
\begin{equation}\label{eq4.28}
|w_{\va,\tau}|_p^p=s^{-p}|v_{\va,\tau}|_p^p,\ \ |w_{\va,\tau}|_q^q=s^{\gamma_qq-q}|v_{\va,\tau}|_q^q.
\end{equation}
Choosing $s=\frac{|v_{\va,\tau}|_p}{c}$, we have $w_{\va,\tau}\in S_c$
and then according to Lemma \ref{lem4.3} there exists $t_{\va,\tau}$ such that $w_{\va,\tau}^{t_{\va,\tau}}\in \cM^-$. Recording that $u_\ast\in \cM^+$,
we have $t_{\va,0}>0$. Gathering \eqref{eq2.2} and
$$e^{pt_{\va,\tau}}|\nabla w_{\va,\tau}|_p^p
=e^{p^\ast t_{\va,\tau}}|w_{\va,\tau}|_{p^\ast}^{p^\ast}
+\mu \gamma_qe^{\gamma_qq t_{\va,\tau}}|w_{\va,\tau}|_q^q,$$
we obtain $t_{\va,\tau}\to -\infty$ as $\tau\to +\infty$ uniformly
for $\va>0$ small. Note that $t_{\va,\tau}$ is continuous with
respect to $\tau$. Then for $\va>0$ small there exists $\tau_\va>0$
such that $t_{\va,\tau_\va}=0$, which means $w_{\va,\tau_\va}\in\cM^-$.
Hence, necessarily in view of Lemma \ref{lem4.13},
\begin{equation}\label{eq4.29}
m^\ast\leq J(w_{\va,\tau_\va})\leq\sup_{\tau\geq 0}J(w_{\va,\tau}).
\end{equation}

In view of \eqref{eq4.27} and \eqref{eq4.28},
we have
\begin{align}\label{eq4.30}
J(w_{\varepsilon,\tau})
&=\frac{1}{p} |\nabla v_{\va,\tau}|_p^p
-\frac{1}{p^\ast}|v_{\va,\tau}|_{p^\ast}^{p^\ast}
-\frac{\mu}{q}s^{\gamma_qq-q}|v_{\va,\tau}|_q^q\notag\\
&=\frac{1}{p}|\nabla (u_\ast+\tau u_\va)|_p^p-\frac{1}{p^\ast}|u_\ast+\tau u_\va|_{p^\ast}^{p^\ast}\notag\\
&\ \ \ -\frac{\mu}{q}s^{\gamma_qq-q}|u_\ast+\tau u_\va|_q^q.
\end{align}
It is easy to check that for $p\in(2,3)$, there exists $C_p>0$ such that
\begin{equation}\label{eq4.29*}(1+t)^p\leq 1+t^p+pt+C_pt^2,\ t\geq0.\end{equation}
At this point observing that
$$s^p=\frac{|v_{\va,\tau}|_p^p}{c^p}\leq1+\frac{p\tau}{c^p}\int_{\R^N}u^{p-1}_\ast u_\va+\frac{C_p\tau^{2}}{c^p}\int_{\R^N}u_\ast^{p-2} u^2_\va+\frac{\tau^p}{c^p}|u_\va|_p^p.$$
Then, it is easy to check that
$$\lim_{\tau\to0^+}J(w_{\varepsilon,\tau})\leq J(u_\ast)=m(c),\ \ \text{uniformly
for}\ \varepsilon>0\ \text{small},$$
and thus this means that there exists $\tau_0>0$ such that
\begin{equation}\label{eq4.31}
J(w_{\varepsilon,\tau})<m(c)+\frac{1}{N}S^{N/p},\ \ \text{for}\ \varepsilon>0\
\text{small and}\ \tau\in (0, \tau_0).
\end{equation}
Next, we compute by employing \eqref{eq4.30} and \eqref{eq4.29*},
\begin{align*}
J(w_{\va,\tau})
&\leq J(u_\ast)+\frac{\tau^p}{p} |\nabla u_\va|_p^p
-\frac{\tau^{p^\ast}}{p^\ast}|u_\va|_{p^\ast}^{p^\ast}+\frac{\tau^2C_p}{p}\int_{\R^N}|\nabla u_\ast|^{p-2}|\nabla u_\va|^2\\
&\ \ \
+\tau\int_{\R^N}|\nabla u_\ast|^{p-1}|\nabla u_\va|-\frac{1}{{p^\ast}}\int_{\R^N}\left(|u_\ast+\tau u_\va|^{p^\ast}
-u_\ast^{p^\ast}-\tau^{p^\ast} u_\va^{p^\ast}\right)
+\frac{\mu}{q}|u_\ast|_q^q\\
&\leq J(u_\ast)+\frac{\tau^p}{p} |\nabla u_\va|_p^p
-\frac{\tau^{p^\ast}}{p^\ast}|u_\va|_{p^\ast}^{p^\ast}+\tau\int_{\R^N}|\nabla u_\ast|^{p-2}\nabla u_\ast\nabla u_\va+\frac{\mu}{q}|u_\ast|_q^q.
\end{align*}
And consequently, there exists $\tau_1>0$ such that
\begin{equation}\label{eq4.32}
J(w_{\va,\tau})<m(c)+\frac{1}{N}S^{N/p},\ \ \text{for}\ \varepsilon>0\
\text{small and}\ \tau\in (\tau_1,+\infty).
\end{equation}
Finally, let us now deal with the case $\tau_0\leq \tau\leq\tau_1$. And since $u_\ast$ is a positive solution of  \eqref{eq1.6}
for some $\lambda^\ast<0$, we have
 \begin{align*}
\int_{\R^N}|\nabla u_\ast|^{p-2}\nabla u_\ast\nabla u_\va
=\int_{\R^N}u_\ast^{p^\ast-1} u_\va
+\mu\int_{\R^N}u_\ast^{q-1} u_\va
+\lambda^\ast \int_{\R^N}u_\ast^{p-1} u_\va,
\end{align*}
and
$$\lambda^\ast c^p=\lambda^\ast |u_\ast|_p^p
=\mu(\gamma_q-1)|u_\ast|_q^q.$$
In view of \eqref{eq4.27} and \eqref{eq4.28},
we use the approach in Garcia Azorero and Peral Alonso\cite{jpa1}, where the
following estimate is obtained (see pages 946 and 949):
\begin{align*}
J(w_{\varepsilon,\tau})
&=\frac{1}{p} |\nabla v_{\va,\tau}|_p^p
-\frac{1}{p^\ast}|v_{\va,\tau}|_{p^\ast}^{p^\ast}
-\frac{\mu}{q}s^{\gamma_qq-q}|v_{\va,\tau}|_q^q\notag\\
&\leq\frac{1}{p}|\nabla u_\ast|_p^p+\tau\int_{\R^N}|\nabla u_\ast|^{p-2}\nabla u_\ast\nabla u_\va+\frac{\tau^p}{p}|\nabla u_\va|_p^p-\frac{1}{p^\ast}|u_\ast+\tau u_\va|_{p^\ast}^{p^\ast}\notag\\
&\ \ \ -\frac{\mu}{q}s^{\gamma_qq-q}|u_\ast+\tau u_\va|_q^q+O(\varepsilon^{\frac{N-p}{p}})
\end{align*}
 for $\varepsilon$ small and $\tau_0\leq \tau\leq\tau_1$.

Then, we deduce Lemma \ref{lem2.1}, \eqref{eq2.2}$-$\eqref{eq2.4} that
\begin{align}\label{eq4.33}
&J(w_{\va,\tau})\\
&\leq J(u_\ast)+\frac{\tau^p}{p} |\nabla u_\va|_p^p
-\frac{\tau^{p^\ast}}{p^\ast}|u_\va|_{p^\ast}^{p^\ast}+O(\varepsilon^{\frac{N-p}{p}})\notag\\
&\ \ \ -\frac{1}{p^\ast}\int_{\R^N}\left(|u_\ast+\tau u_\va|^{p^\ast}
-u_\ast^{p^\ast}-(\tau u_\va)^{p^\ast}-{p^\ast}\tau u_\ast^{p^\ast}u_\va\right)\notag\\
&\ \ \ -\frac{\mu}{q}\int_{\R^N}\left(|u_\ast
+\tau u_\va|^q-u_\ast^q-q\tau u_\ast^{q-1}u_\va\right)
+\tau\lambda^\ast\int_{\R^N}u_\ast^{p-1} u_\va\notag\\
&\ \ \ -\frac{\tau\mu}{c^p}(\gamma_q-1)|u_\ast+\tau u_\va|_q^q\int_{\R^N}u_\ast^{p-1} u_\va
-\frac{\tau^p\mu}{pc^p}(\gamma_q-1)|u_\ast+\tau u_\va|_q^q|u_\va|_p^p\notag\\
&\ \ \
 -\frac{C_p\tau^{p-1}\mu}{c^p}(\gamma_q-1)|u_\ast+\tau u_\va|_q^q\int_{\R^N}u_\ast^{p-2} u_\va^2
\notag\\
&\leq J(u_\ast)+\frac{1}{N}S^{N/p}+O(\varepsilon^{\frac{N-p}{p}})-\frac{\tau^p\mu}{pc^p}(\gamma_q-1)|u_\ast+\tau u_\va|_q^q|u_\va|_p^p-\int_{\R^N}u_\ast(\tau u_\va)^{p^\ast-1}\notag\\
&\ \ \ -\frac{\tau\mu}{c^p}(\gamma_q-1)\left(|u_\ast+\tau u_\va|_q^q-|u_\ast|_q^q\right)
\int_{\R^N}u_\ast^{p-1}  u_\va
\notag\\
&\ \ \ -\frac{C_p\tau^{2}\mu}{c^p}(\gamma_q-1)|u_\ast+\tau u_\va|_q^q\int_{\R^N}u_\ast^{p-2} u_\va^2+O(\varepsilon^{\frac{N-p}{p}})\notag\\
&\leq m(c)+\frac{1}{N}S^{N/p}+O(\varepsilon^{\frac{N-p}{p}})+O(\varepsilon^{\frac{2(N-p)}{p^2}})-C \varepsilon^{\frac{(N-p)(p-1)}{p^2}}\notag\\
&<m(c)+\frac{1}{N}S^{N/p}
\end{align}
for $\va>0$ small, where we used $N^{2/3}<p<3$, $p<q<p+p^2/N$ and \eqref{eq2.4}. Therefore,
summarizing \eqref{eq4.29}, \eqref{eq4.31}, \eqref{eq4.32}
and \eqref{eq4.33} yields
$m^\ast\leq\sup_{\tau\geq 0}J(w_{\va,\tau})<m(c)+\frac{1}{N}S^{N/p}$.
We conclude the proof.
\end{proof}

\begin{lemma}\label{lem4.15}
Suppose that $p< q<p+p^2/N$ and $\mu c^{(1-\gamma_q)q}<\alpha_q$. Then there exists a Palais-Smale sequence
$\{u_n\}\subset S_c$ for $J|_{S_c}$ at the level $m^\ast$
with the properties $|u_n^-|_p\to 0$ and
\begin{equation*}P(u_n)\to 0,\end{equation*}
as $n\to\infty$, where $u^-=\min\{u,0\}$.
\end{lemma}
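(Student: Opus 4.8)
The plan is to realize the mountain--pass value $m^\ast$ as a minimax value of the \emph{stretched} functional on the product manifold $S_c\times\R$ and to read off the three required properties from the four conclusions of the linking theorem (Lemma \ref{lem2.4}). First I would define $\tilde J:S_c\times\R\to\R$ by $\tilde J(u,s)=J(u^s)=\Psi_u^{\mu}(s)$; since $J\in C^1$ and $s\mapsto\Psi_u^{\mu}(s)$ is smooth, $\tilde J$ is $C^1$ on the complete connected $C^1$-Finsler manifold $S_c\times\R$. Each $\gamma\in\Gamma$ lifts to $(\gamma(\cdot),0)$, and conversely a path $(\sigma,\theta)$ in $S_c\times\R$ produces $t\mapsto\sigma(t)^{\theta(t)}\in\Gamma$ with the same energy profile by Lemma \ref{lem2.3}; hence the minimax value of $\tilde J$ over the lifted class $\tilde\Gamma$ of paths joining $\cM^+\times\{0\}$ to $J^{2m(c)}\times\{0\}$ equals $m^\ast$.

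Next I would apply Lemma \ref{lem2.4} to $\phi=\tilde J$ with the homotopy-stable family $\mathcal F=\{\tilde\gamma([0,1]):\tilde\gamma\in\tilde\Gamma\}$, extended boundary $B=(\cM^+\times\{0\})\cup(J^{2m(c)}\times\{0\})$, and closed set $F=\{(u,s)\in S_c\times\R:u^s\in\cM^-\}$. Condition (2) holds because $J<0$ on $B$ while $\inf_F\tilde J=\inf_{\cM^-}J=m^\ast$ by Lemma \ref{lem4.13}; condition (1) is exactly the crossing argument of Lemma \ref{lem4.13}, namely for $\tilde\gamma=(\sigma,\theta)$ the path $\gamma(t)=\sigma(t)^{\theta(t)}$ satisfies $t_{\gamma(0)}>0$ and, by Lemma \ref{lem4.12}, $t_{\gamma(1)}<0$, so the $C^1$ map $u\mapsto t_u$ from Lemma \ref{lem4.3} vanishes at some $t_0$, forcing $\gamma(t_0)\in\cM^-$, i.e. $\tilde\gamma(t_0)\in F\setminus B$. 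Crucially, I would take the minimizing sequence of sets to be $A_n=\gamma_n([0,1])\times\{0\}\subset S_c\times\{0\}$ with $\gamma_n\in\Gamma$ \emph{nonnegative}: this is admissible because $J(|u|)\le J(u)$ lets one replace $\gamma_n$ by $|\gamma_n|$ without raising $\sup J$, and both endpoints can be fixed nonnegative ($u_\ast>0$ and $u_\ast^{t_1}>0$).

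Lemma \ref{lem2.4} then yields $x_n=(w_n,s_n)$ with $\tilde J(x_n)\to m^\ast$, $\|d\tilde J(x_n)\|\to0$, $\mathrm{dist}(x_n,F)\to0$ and $\mathrm{dist}(x_n,A_n)\to0$, and I set $u_n=w_n^{s_n}\in S_c$. Since $A_n\subset S_c\times\{0\}$, the $\R$-component gives $|s_n|\le\mathrm{dist}(x_n,A_n)\to0$, so $s_n\to0$. The $s$-derivative is $\partial_s\tilde J(w_n,s_n)=(\Psi_{w_n}^{\mu})'(s_n)=P(u_n)$, whence $P(u_n)\to0$. Because $A_n$ is nonnegative, $\mathrm{dist}(x_n,A_n)\to0$ forces $|w_n^-|_p\to0$, and as the dilation preserves the $L^p$-norm, $|u_n^-|_p=|w_n^-|_p\to0$. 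Finally, for $\varphi\in T_{w_n}S_c$ one has $\partial_u\tilde J(w_n,s_n)[\varphi]=J'(u_n)[\varphi^{s_n}]$, while Lemma \ref{lem2.2} identifies $T_{u_n}S_c$ with $T_{w_n}S_c$ via $\psi\mapsto\psi^{-s_n}$; since $\|\psi^{-s_n}\|^p=e^{-ps_n}|\nabla\psi|_p^p+|\psi|_p^p$ and $s_n\to0$, this isomorphism has operator norm tending to $1$, so $\|(J|_{S_c})'(u_n)\|\le(1+o(1))\|\partial_u\tilde J(w_n,s_n)\|\to0$. Thus $\{u_n\}$ is the desired Palais--Smale sequence.

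The step I expect to be the main obstacle is precisely this transfer of the differential: a Palais--Smale sequence for $\tilde J$ on the product controls $\partial_u\tilde J$ only through the $s_n$-dependent isomorphism of Lemma \ref{lem2.2}, whose norm degenerates as $|s_n|\to\infty$ because the dilation scales the gradient by $e^{s_n}$ and is not an $E$-isometry when $p\ne2$. The whole scheme therefore hinges on forcing $s_n\to0$, which is why the minimizing family is chosen inside $S_c\times\{0\}$; arranging simultaneously that this family be nonnegative (to recover $|u_n^-|_p\to0$) and that the crossing condition (1) still hold is the delicate bookkeeping, and the $p\ne2$ scaling is what makes the transfer estimate, rather than the existence of the sequence, the real difficulty.
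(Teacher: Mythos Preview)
Your proposal is correct and follows essentially the same route as the paper: stretched functional $\tilde J(u,s)=J(u^s)$ on $S_c\times\R$, identification of the minimax values via Lemma~\ref{lem2.3}, application of Lemma~\ref{lem2.4} with a nonnegative minimizing family $A_n=\gamma_n([0,1])\times\{0\}$ (forcing $s_n\to0$), and transfer of the differential through Lemma~\ref{lem2.2}. The only notable difference is cosmetic: the paper takes the closed set $F=\{(u,s):\tilde J(u,s)\ge m^\ast\}$, whereas you take $F=\{(u,s):u^s\in\cM^-\}$; since $\cM^0=\emptyset$ (Lemma~\ref{lem4.2}) makes $\cM^-$ closed and Lemma~\ref{lem4.13} gives $\inf_F\tilde J=m^\ast$ in both cases, the two choices are interchangeable.
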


\begin{proof}[\bf Proof]
We introduce a stretched functional $\tilde{J}: E\times\R\to\R$
in the spirit of \cite{lj} as
$$\tilde{J}(u,t)=J(u^t)
=\frac{e^{pt}}{p}|\nabla u|_p^p
-\frac{e^{p^\ast t}}{p^\ast}|u|_{p^\ast}^{p^\ast}
-\mu\frac{e^{\gamma_q q t}}{q}|u|_q^q.$$
Let us consider the set
$$\widetilde{\Gamma}
=\left\{\tilde{\gamma}\in C([0,1],S_c\times\R):
\tilde{\gamma}(0)\in \cM^+\times\{0\},\
\tilde{\gamma}(1)\in J^{2m(c)}\times\{0\}\right\}.$$
It is clear that if
$\gamma\in\Gamma$ then $\tilde{\gamma}:=(\gamma,0)\in\widetilde{\Gamma}$
and $\tilde{J}(\tilde{\gamma}(t))=J(\gamma(t))$ for $t\in[0,1]$;
while if $\tilde{\gamma}=(\tilde{\gamma}_1,\tilde{\gamma}_2)\in\widetilde{\Gamma}$
then
from Lemma \ref{lem2.3},
$\gamma(\cdot):=\tilde{\gamma}_1(\cdot)^{\tilde{\gamma}_2(\cdot)}\in\Gamma$
and $J(\gamma(t))=\tilde{J}(\tilde{\gamma}(t))$ for $t\in[0,1]$.
Therefore, we have
\begin{equation}\label{eq4.34}m^\ast=\inf_{\tilde{\gamma}\in\widetilde{\Gamma}}\sup_{t\in[0,1]}
\tilde{J}(\tilde{\gamma}(t)).\end{equation}

Under the definition of \eqref{eq4.34},
we observe that for $\va_n=\frac{1}{n^2}$ there exists $\gamma_n\in\Gamma$ such that
$$\sup_{t\in[0,1]}J(\gamma_n(t))\leq m^\ast+\frac{1}{n^2}.$$
Replacing $\gamma_n$ by $|\gamma_n|$ if necessary, we may assume
that $\gamma_n(t)\geq0$ in $\R^N$ for all $t\in[0,1]$.
In addition, setting $\tilde{\gamma}_n=(\gamma_n,0)\in\widetilde{\Gamma}$,
we have
$$\sup_{t\in[0,1]}\tilde{J}(\tilde{\gamma}_n(t))\leq m^\ast+\frac{1}{n^2}.$$

For any $\tilde{\gamma}=(\tilde{\gamma}_1,\tilde{\gamma}_2)\in\widetilde{\Gamma}$, let us consider the
function $P_{\tilde{\gamma}}:[0,1]\to\R$ given by
$$P_{\tilde{\gamma}}(\tau)=P(\tilde{\gamma}_1(\tau)^{\tilde{\gamma}_2(\tau)}).$$
Since ${\tilde{\gamma}}(0)=(\tilde{\gamma}_1(0),\tilde{\gamma}_2(0))\in (\cM^+,0)$,
there holds $t_{\tilde{\gamma}_1(0)}>s_{\tilde{\gamma}_1(0)}=0$ according to Lemma \ref{lem4.3}.
Observing that $J(\tilde{\gamma}_1(1))=\tilde{J}({\tilde{\gamma}}(1))<2m(c)$ we have
$t_{\tilde{\gamma}_1(1)}<0$ by virtue of Lemma \ref{lem4.12}.
Moreover, Lemma \ref{lem2.3} yields the map $\tau\mapsto\tilde{\gamma}_1(\tau)^{\tilde{\gamma}_2(\tau)}$ is continuous from $[0,1]\to E$.
Hence, there exists $\tau_{{\tilde{\gamma}}}\in(0,1)$ such that $P_{\tilde{\gamma}}(\tau_{{\tilde{\gamma}}})=0$,
which implies
\begin{equation}\label{eq4.35}\max_{\tilde{\gamma}([0,1])}\tilde{J}\geq \tilde{J}({\tilde{\gamma}}(\tau_{{\tilde{\gamma}}}))\geq\inf_{\cM^-}J(u)=m^\ast.\end{equation}
It follows from Lemma \ref{lem4.3} that
\begin{equation}\label{eq4.36}m^\ast>0\geq \sup_{\cM^+\cup J^{2m(c)}}J(u)=\sup_{(\cM^+,0)\cup (J^{2m(c)},0)}\tilde{J}(u,t).\end{equation}
In the following, we will apply Lemma \ref{lem2.4} to achieve our result. For this purpose, let
$$X=S_c\times\R,\ {\mathcal F}=\{\tilde{\gamma}([0,1]):\tilde{\gamma}\in\widetilde{\Gamma}\},\
B=(\cM^+,0)\cup (J^{2m(c)},0),$$
$$F=\{(u,s)\in S_c\times\R|\tilde{J}(u,s)\geq m^\ast\},\ A=\tilde{\gamma}([0,1]),\
A_n=\tilde{\gamma}_n([0,1])=\gamma_n([0,1])\times\{0\}.$$
We need to checked that ${\mathcal F}$ is a homotopy stable family of compact subsets of $X$ with extended closed boundary $B$ and $F$ satisfies the assumptions (1) and (2) in Lemma \ref{lem2.4}.
In fact, for every $\tilde{\gamma}\in\widetilde{\Gamma}$, since $\tilde{\gamma}(0)\in(\cM^+,0)$ and
$\tilde{\gamma}(1)\in(J^{2m(c)},0)$, we have $\tilde{\gamma}(0), \tilde{\gamma}(1)\in B$.
For any set $A$ in
${\mathcal F}$ and any $\eta\in C([0,1]\times X; X)$ satisfying $\eta(t,x)=x$ for all $(t,x)\in(\{0\}\times X)\cup
([0,1]\times B)$, there holds that $\eta(1,\tilde{\gamma}(0))=\tilde{\gamma}(0)$, $\eta(1,\tilde{\gamma}(1))=\tilde{\gamma}(1)$. Hence, we have that $\eta(\{1\}\times A)\in{\mathcal F}$.
Now, in view of \eqref{eq4.35}, we have $A\cap F\neq\emptyset$. Meanwhile, $F\cap B=\emptyset$ by virtue of \eqref{eq4.36}. Hence, we can deduce that the assumptions (1) and (2) in Lemma \ref{lem2.4} are valid.

Therefore,
using Lemma \ref{lem2.4}
yields the existence of a sequence $\{(v_n,t_n)\}\subset S_c\times\R$
such that, as $n\to+\infty$,
\begin{equation}\label{eq4.37}
|t_n|+\text{dist}(v_n,{\gamma}_n([0,1]))\to 0,
\end{equation}
\begin{equation}\label{eq4.38}
\tilde{J}(v_n,t_n)\to m^\ast,
\end{equation}
\begin{equation}\label{eq4.39}
(\tilde{J}|_{S_c\times\R})'(v_n,t_n)\to 0.
\end{equation}
Moreover, we can infer that $\tilde{J}(v_n,t_n)=\tilde{J}(v_n^{t_n},0)$ and
\begin{equation}\label{eq4.40}
\langle(\tilde{J}|_{S_c\times\R})'(v_n,t_n),(\varphi,s)\rangle
=\langle(\tilde{J}|_{S_c\times\R})'(v_n^{t_n},0), (\varphi^{t_n},s)\rangle
\end{equation}
for $(\varphi,s)\in E\times\R$ with $\int_{\R^N}v_n\varphi=0$.
Recording that $u_n=v_n^{t_n}\in S_c$, we see from \eqref{eq4.38} that
$$J(u_n)=\tilde{J}(v_n^{t_n},0)=\tilde{J}(v_n,t_n)\to m^\ast,\ \
\text{as}\ n\to\infty.$$
Taking $({\bf 0},1)$ as a test function in \eqref{eq4.40}, we deduce
from \eqref{eq4.39} that
$$P(u_n)=\partial_t\tilde{J}(u_n,0)\to 0,\ \ \text{as}\ n\to\infty.$$
For $w\in E$ with $\int_{\R^N}v_n^{t_n}w=0$, we immediately deduce from \eqref{eq4.37},
\eqref{eq4.39} and Lemma \ref{lem2.2} that $(J|_{S_c})'(u_n)\to 0$ as $n\to\infty$ provided taking $(w^{-t_n},0)$
as a test function in \eqref{eq4.40}.
In view of \eqref{eq4.37} again,
$|u_n^-|_p^p=|(v_n^-)^{t_n}|_p^p=|v_n^-|_p^p\to 0$ as $n\to\infty$, as desired.
The proof is completed.
\end{proof}

\begin{proof}[\bf Proof of Theorem \ref{the1.1} (Part III)]
By Lemma \ref{lem4.15}, there exists a sequence $\{u_n\}\subset S_c$ with
the following properties
$$J(u_n)\to m^*,\ \ (J|_{S_c})'(u_n)\to 0,\ \ |u_n^-|_p\to 0,\ \
P(u_n)\to 0,\ \ \text{as}\ n\to\infty.$$
If alternative $(i)$ in Lemma \ref{lem3.3} occurs, then
$u_n\rightharpoonup u$ in $E$ for some $u\neq 0$ and
\begin{align*}
m^\ast-\Lambda
&\geq J(u).
\end{align*}
In the case $J(u)\geq0$, we have
$$m^\ast-\Lambda\geq 0>m(c),$$
yielding a contradiction with Lemma \ref{lem4.14}; while in
the case $J(u)<0$, we have $|\nabla u|_p<t^*$ and then, by
$|u|_p\leq c$ and Remark \ref{rem4.1} that
$$J(u)\geq m(|u|_p)\geq m(c),$$
which is also contradicts the result of Lemma \ref{lem4.14}.
Therefore, alternative $(ii)$
in Lemma \ref{lem3.3} holds true and henceforth $u_n\to u$ in
$E$ with $u$ being a ground state solution of \eqref{eq1.1} and \eqref{eq1.3} for some
$\lambda<0$. Moreover, by $|u_n^-|_p\to 0$ as
$n\to\infty$ and then similarly as in proof of Theorem \ref{the1.1}(Part I), we conclude that $u$ is positive.
The proof is completed.
\end{proof}

\section{$L^p$-critical and supercritical perturbations}

Throughout this section, we always assume
that
$$p+p^2/N\leq q<p^\ast,\ \ c,\mu>0,\ \mu c^{(1-\gamma_q)q}<\alpha(q).$$
We wish to investigate the
mountain pass geometry of $J$ on $S_c$. For this reason, we recall that
$$D_k:=D_{k}(c)=\{u\in S_c : |\nabla u|_p<k\},\ \ \text{for}\ k>0.$$

\begin{lemma}\label{lem5.1}
Suppose that $p+p^2/N\leq q<p^\ast$ and $\mu c^{(1-\gamma_q)q}<\alpha(q)$
when $q=p+p^2/N$.\\
$(1)$ There exist two positive numbers $k_1<k_2$ such that
$$0<\sup_{u\in \overline{D_{k_1}}}J(u)<\inf_{u\in \partial D_{k_2}}J(u)$$
and
$$J(u)>0\ \ \text{and}\ \ P(u)>0,\ \ \text{for}\ u\in D_{k_2}.$$
$(2)$ There exists $u_0\in S_c\setminus A_{k_2}$ such that $J(u_0)<0$.
\end{lemma}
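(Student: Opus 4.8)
The plan is to reduce everything to the behaviour of a single scalar function, obtained by bounding $J$ (and $P$) from below via the Sobolev and Gagliardo--Nirenberg inequalities. For $u\in S_c$ one has $|u|_{p^\ast}^{p^\ast}\le S^{-p^\ast/p}|\nabla u|_p^{p^\ast}$ and, by Lemma \ref{lem1.1}, $|u|_q^q\le C(q)c^{(1-\gamma_q)q}|\nabla u|_p^{\gamma_q q}$, so that $J(u)\ge g(|\nabla u|_p)$ with
$$g(t)=\frac{1}{p}t^p-\frac{S^{-p^\ast/p}}{p^\ast}t^{p^\ast}-\frac{\mu}{q}C(q)c^{(1-\gamma_q)q}t^{\gamma_q q}.$$
Likewise $P(u)\ge \tilde g(|\nabla u|_p)$ where $\tilde g(t)=t^p-S^{-p^\ast/p}t^{p^\ast}-\mu\gamma_q C(q)c^{(1-\gamma_q)q}t^{\gamma_q q}$. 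At the same time, since the two subtracted terms are nonnegative, $J(u)\le \frac1p|\nabla u|_p^p$. Part (1) will follow once the sign of $g$ and $\tilde g$ near the origin is understood, while Part (2) will follow from the behaviour of the fiber map $\Psi_u$ as $t\to+\infty$.

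The core step is a monotonicity analysis. Since $p+p^2/N\le q<p^\ast$ forces $p\le \gamma_q q<p^\ast$, I would write $g(t)/t^p=\frac1p-\frac{S^{-p^\ast/p}}{p^\ast}t^{p^\ast-p}-\frac{\mu}{q}C(q)c^{(1-\gamma_q)q}t^{\gamma_q q-p}$ and note that both exponents $p^\ast-p$ and $\gamma_q q-p$ are $\ge 0$, with $p^\ast-p>0$ strictly, so $g(t)/t^p$ is strictly decreasing. Its limit as $t\to0^+$ equals $\frac1p$ in the supercritical case $\gamma_q q>p$, and $\frac1p-\frac{\mu}{q}C(q)c^{(1-\gamma_q)q}$ in the critical case $\gamma_q q=p$; here is exactly where the hypothesis $\mu c^{(1-\gamma_q)q}<\alpha(q)=\frac{q}{pC(q)}$ enters, to make this limit strictly positive. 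Hence in both cases $g(t)/t^p$ decreases strictly from a positive value to $-\infty$, so $g$ has a unique zero $t_0>0$ with $g>0$ on $(0,t_0)$. The identical argument for $\tilde g(t)/t^p$ (using $\mu\gamma_q C(q)c^{(1-\gamma_q)q}<1$, which again reduces to \eqref{eq1.6} in the critical case via $\gamma_q q=p$) yields a unique zero $\tilde t_0>0$ with $\tilde g>0$ on $(0,\tilde t_0)$. I expect the delicate point of the whole proof to be precisely this verification that the two positivity thresholds are strictly positive at the critical exponent $q=p+p^2/N$: a short but genuinely hypothesis-sensitive computation, whereas in the supercritical range $\alpha(q)=+\infty$ makes it automatic.

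With this in hand I would fix $k_2\in(0,\min\{t_0,\tilde t_0\})$. Then every $u\in D_{k_2}$ satisfies $0<|\nabla u|_p<k_2$ (note $|\nabla u|_p>0$ since $u\ne0$), so $J(u)\ge g(|\nabla u|_p)>0$ and $P(u)\ge\tilde g(|\nabla u|_p)>0$, which is the second display of Part (1). Next I choose $k_1\in(0,k_2)$ small enough that $\frac1p k_1^p<g(k_2)$. Then
$$\sup_{\overline{D_{k_1}}}J\le \frac1p k_1^p<g(k_2)\le \inf_{\partial D_{k_2}}J,$$
where the rightmost bound uses $J(u)\ge g(k_2)$ whenever $|\nabla u|_p=k_2$; moreover $\sup_{\overline{D_{k_1}}}J>0$, because any $u\in S_c$ with $0<|\nabla u|_p<k_1$ gives $J(u)\ge g(|\nabla u|_p)>0$. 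This establishes the chain $0<\sup_{\overline{D_{k_1}}}J<\inf_{\partial D_{k_2}}J$.

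Finally, for Part (2), I take any fixed $u\in S_c$ and dilate it along the fiber map
$$\Psi_u(t)=J(u^t)=\frac{e^{pt}}{p}|\nabla u|_p^p-\frac{e^{p^\ast t}}{p^\ast}|u|_{p^\ast}^{p^\ast}-\mu\frac{e^{\gamma_q q t}}{q}|u|_q^q.$$
Because $p^\ast>p$ and $p^\ast>\gamma_q q$, the critical term dominates as $t\to+\infty$, so $\Psi_u(t)\to-\infty$; simultaneously $|\nabla u^t|_p=e^{t}|\nabla u|_p\to+\infty$. Choosing $t$ large enough that both $J(u^t)<0$ and $|\nabla u^t|_p\ge k_2$ hold, the function $u_0:=u^t$ satisfies $u_0\notin A_{k_2}$ (i.e. $|\nabla u_0|_p\ge k_2$) and $J(u_0)<0$, completing the proof.
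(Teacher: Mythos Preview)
Your proof is correct and follows essentially the same strategy as the paper: bounding $J$ and $P$ from below by the scalar functions $g$ and $\tilde g$ via Sobolev and Gagliardo--Nirenberg, bounding $J$ from above by $\frac1p|\nabla u|_p^p$, and using the fiber map $u^t$ for Part~(2). The paper's version is terser---after recording the three inequalities and the observation that $\frac1p>\frac{\mu}{q}C(q)c^{(1-\gamma_q)q}$ when $\gamma_q q=p$, it simply says ``taking two small positive numbers $k_1<k_2$, we arrive at the desired result''---whereas you spell out the monotonicity of $g(t)/t^p$, the existence of the unique zeros $t_0,\tilde t_0$, and the explicit choice $k_1$ with $\frac1p k_1^p<g(k_2)$, which is a welcome clarification of the same idea.
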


\begin{proof}[\bf Proof]
We focus our attention
on the following two aspects.

$(1)$ It follows from Lemma \ref{lem1.1} and the Sobolev inequality
that, for $u\in S_c$,
\begin{align*}
J(u)
&=\frac{1}{p} |\nabla u|_p^p
-\frac{1}{p^\ast}|u|_{p^\ast}^{p^\ast}-\frac{\mu}{q}|u|_q^q\\
&\geq\frac{1}{p} |\nabla u|_p^p
-\frac{S^{-\frac{{p^\ast}}{p}}}{{p^\ast}}|\nabla u|_p^{p^\ast}
-\frac{\mu}{q}C(q)c^{(1-\gamma_q)q}|\nabla u|_p^{\gamma_q q}
\end{align*}
and
\begin{align*}
P(u)
&=|\nabla u|_p^p-|u|_{p^\ast}^{p^\ast}-\mu \gamma _q|u|_q^q\\
&\geq |\nabla u|_p^p-S^{-\frac{p^\ast}{p}}|\nabla u|_p^{p^\ast}
-\mu \gamma _qC(q)c^{(1-\gamma_q)q}|\nabla u|_p^{\gamma_q q}.
\end{align*}
In particular, it is also clear that
$$J(u)\leq\frac{1}{p} |\nabla u|_p^p.$$
Observe that $p\leq \gamma_q q<p^\ast$ and
$$\frac{1}{p}>\frac{\mu}{q}C(q)c^{(1-\gamma_q)q}\ \
\text{when}\ \gamma_q q=p.$$
Thereby, taking two small positive numbers $k_1<k_2$, we arrive at the
desired result.

$(2)$ It is not difficult
to check that, for $u\in S_c$,
$$\lim_{t\to +\infty}|\nabla u^t|_p=+\infty\ \ \text{and}\ \
\lim_{t\to +\infty}J(u^t)=-\infty.$$
If, in addition, we choose $u_0=u^t$ with $t>0$ sufficiently large,
then the desired result is obtained and this completes the proof.
\end{proof}

By Lemma \ref{lem5.1}, we can define the mountain pass level of
the functional $J$ on $S_c$ by
\begin{equation}\label{eq5.1}
m^\ast:=m^\ast( c,\mu)
=\inf_{\gamma\in\Gamma}\sup_{t\in[0,1]}J(\gamma(t)),
\end{equation}
where
\begin{equation*}
\Gamma:=\Gamma(c,\mu)=\left\{\gamma\in C([0,1],S_c):
\gamma(0)\in \overline{D_{k_1}},\ J(\gamma(1))\leq0\right\}.
\end{equation*}
Clearly, $m^\ast\geq\inf_{u\in \partial D_{k_2}}J(u)>0$.

\subsection{The case $q=p+p^2/N$}
Let us keep in mind that $q=\bar{q}=p+p^2/N$ throughout this
subsection.

\begin{lemma}\label{lem5.2}
Let $a_2,\,b_2>0$. Then the function
$$\xi(t)=a_2e^{pt}-b_2e^{p^\ast t},\ \ t\in\R$$
has a unique critical point at which $\xi$ achieves its maximum.
\end{lemma}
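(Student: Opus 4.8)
The plan is to treat this as a direct one-variable calculus computation, exploiting the fact that $p < p^\ast$ (which holds since $p^\ast = \frac{Np}{N-p} > p$ for $1<p<N$). First I would differentiate once and factor out the strictly positive exponential. Writing
$$\xi'(t) = pa_2 e^{pt} - p^\ast b_2 e^{p^\ast t}
= pa_2 e^{pt}\left(1 - \frac{p^\ast b_2}{pa_2}\,e^{(p^\ast-p)t}\right),$$
we observe that the prefactor $pa_2 e^{pt}$ is positive for every $t\in\R$, so the sign of $\xi'(t)$ is governed entirely by the bracketed term.

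Next I would analyze the monotonicity of that bracket. Since $p^\ast - p > 0$ and $a_2, b_2 > 0$, the map $t \mapsto 1 - \frac{p^\ast b_2}{pa_2}e^{(p^\ast-p)t}$ is strictly decreasing, tends to $1$ as $t\to-\infty$, and tends to $-\infty$ as $t\to+\infty$. Hence it has exactly one zero, namely
$$t^\ast = \frac{1}{p^\ast - p}\ln\!\left(\frac{pa_2}{p^\ast b_2}\right),$$
and it is positive on $(-\infty, t^\ast)$ and negative on $(t^\ast, +\infty)$. Consequently $\xi'$ is positive before $t^\ast$ and negative after it, which shows at once that $t^\ast$ is the unique critical point of $\xi$ and that $\xi$ is strictly increasing on $(-\infty, t^\ast)$ and strictly decreasing on $(t^\ast, +\infty)$. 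This sign-change argument already delivers both conclusions: uniqueness of the critical point and the fact that $\xi$ attains its maximum there. If one prefers a second-derivative confirmation, substituting the relation $p^\ast b_2 e^{p^\ast t^\ast} = pa_2 e^{pt^\ast}$ into $\xi''(t^\ast) = p^2 a_2 e^{pt^\ast} - (p^\ast)^2 b_2 e^{p^\ast t^\ast}$ collapses it to $pa_2 e^{pt^\ast}(p - p^\ast) < 0$, confirming a strict local maximum.

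There is no genuine obstacle here; the statement is elementary and the only point requiring a moment of care is making explicit that $p^\ast > p$ so that the exponent $p^\ast - p$ is positive, which is what forces the bracket to be monotone and the critical point to be unique rather than, say, a minimum or an inflection. To round off the global nature of the maximum one may additionally note $\xi(t) \to 0^+$ as $t \to -\infty$ and $\xi(t) \to -\infty$ as $t \to +\infty$, so that the unique interior critical value is indeed the global supremum of $\xi$ on $\R$.
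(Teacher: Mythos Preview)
Your proof is correct. The paper itself does not supply a proof of this lemma, treating it as an elementary calculus fact; your direct computation of $\xi'$, factoring out $pa_2e^{pt}$, and reading off the unique sign change from the strict monotonicity of $t\mapsto e^{(p^\ast-p)t}$ is exactly the standard argument one would expect and is entirely adequate.
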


We observe that $\gamma_{\bar{q}}\bar{q}=p$ and
$\mu c^{(1-\gamma_{\bar{q}}){\bar{q}}}<\alpha(\bar{q})$, hence there holds
\begin{equation}\label{eq5.2}
\gamma _{\bar{q}}C({\bar{q}})\mu c^{(1-\gamma_{\bar{q}}){\bar{q}}}<1.
\end{equation}

\begin{lemma}\label{lem5.3}
It results
$\cM^0=\emptyset$.
\end{lemma}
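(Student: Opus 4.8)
The plan is to exploit the structural collapse of the fiber map that occurs precisely in the $L^p$-critical regime. Since $q=\bar{q}=p+p^2/N$ forces $\gamma_{\bar{q}}\bar{q}=p$, the exponent $e^{\gamma_q q t}$ coincides with $e^{pt}$, so for every $u\in S_c$ the fiber map reduces to
$$\Psi_u(t)=\Big(\frac{1}{p}|\nabla u|_p^p-\frac{\mu}{q}|u|_q^q\Big)e^{pt}-\frac{1}{p^\ast}|u|_{p^\ast}^{p^\ast}e^{p^\ast t}=:a_2e^{pt}-b_2e^{p^\ast t},$$
which is exactly the form studied in Lemma \ref{lem5.2}. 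First I would check that the coefficients $a_2,b_2$ are strictly positive; then Lemma \ref{lem5.2} tells me that $\Psi_u$ has a unique critical point, which is a strict maximum, and the conclusion $\cM^0=\emptyset$ will follow by inspecting the second derivative there.

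The coefficient $b_2=\frac{1}{p^\ast}|u|_{p^\ast}^{p^\ast}$ is positive because $u\in S_c$ is nontrivial. The delicate point is $a_2>0$. Here I would use the Gagliardo-Nirenberg inequality (Lemma \ref{lem1.1}): since $\gamma_q q=p$ and $|u|_p=c$,
$$\frac{\mu}{q}|u|_q^q\leq\frac{\mu}{q}C(q)c^{(1-\gamma_q)q}|\nabla u|_p^{\gamma_q q}=\frac{\mu C(q)c^{(1-\gamma_q)q}}{q}|\nabla u|_p^p,$$
whence $a_2\geq\frac{1}{p}\big(1-\tfrac{p}{q}\mu C(q)c^{(1-\gamma_q)q}\big)|\nabla u|_p^p$. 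Because $\tfrac{p}{q}=\gamma_q$ in this case, the admissibility bound $\mu c^{(1-\gamma_q)q}<\alpha(\bar{q})=\frac{\bar{q}}{pC(\bar{q})}$ — equivalently \eqref{eq5.2} — makes the bracket strictly positive, so $a_2>0$.

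With $a_2,b_2>0$ I would finish by contradiction. Suppose $u\in\cM^0$, so that $\Psi_u'(0)=0$ and $\Psi_u''(0)=0$. The first identity says $0$ is the unique critical point furnished by Lemma \ref{lem5.2}, at which $pa_2=p^\ast b_2$ (from $\Psi_u'(0)=pa_2-p^\ast b_2=0$). Substituting this into $\Psi_u''(0)=p^2a_2-(p^\ast)^2b_2$ gives $\Psi_u''(0)=p\,a_2(p-p^\ast)<0$, contradicting $\Psi_u''(0)=0$. Hence $\cM^0=\emptyset$. The only genuinely quantitative step is the positivity $a_2>0$, which is exactly where the hypothesis \eqref{eq5.2} on $\mu c^{(1-\gamma_q)q}$ is consumed; the remainder is the elementary one-variable calculus of $\xi$ already packaged in Lemma \ref{lem5.2}.
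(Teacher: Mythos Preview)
Your proof is correct and follows essentially the same route as the paper: both arguments hinge on the Gagliardo--Nirenberg inequality together with \eqref{eq5.2} to force the strict inequality $|\nabla u|_p^p>\mu\gamma_{\bar q}|u|_{\bar q}^{\bar q}$ (your $a_2>0$), from which the simultaneous vanishing of $\Psi_u'(0)$ and $\Psi_u''(0)$ is impossible. The paper eliminates $|u|_{p^\ast}^{p^\ast}$ from the two identities to obtain $|\nabla u|_p^p=\mu\gamma_{\bar q}|u|_{\bar q}^{\bar q}$ directly, whereas you compute $\Psi_u''(0)=pa_2(p-p^\ast)<0$; these are two equivalent ways of reading off the same contradiction.
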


\begin{proof}[\bf Proof]
Suppose by contradiction that $\cM^0\neq\emptyset$. Then there exists
$u\in\cM^0$ satisfying
$$|\nabla u|_p^p-|u|_{p^\ast}^{p^\ast}
-\mu\gamma_{\bar{q}}|u|_{\bar{q}}^{\bar{q}}=0$$
and
$$p|\nabla u|_p^p-{p^\ast}|u|_{p^\ast}^{p^\ast}
-p\mu \gamma_{\bar{q}}|u|_{\bar{q}}^{\bar{q}}=0.$$
Thus, gathering Lemma \ref{lem1.1} and \eqref{eq5.2} we see that
$$|\nabla u|_p^p
=\mu \gamma_{\bar{q}}|u|_{\bar{q}}^{\bar{q}}
\leq \gamma_{\bar{q}} C({\bar{q}})\mu c^{(1-\gamma_{\bar{q}}){\bar{q}}}
|\nabla u|_p^{p}<|\nabla u|_p^p,$$
which is absurd and therefore the thesis follows. The proof is completed.
\end{proof}

\begin{lemma}\label{lem5.4}
For every $u\in S_c$, there exists a unique $t_u\in\R$ such
that $u^{t_u}\in \cM$ and $J(u^{t_u})=\max_{t\in\R}J(u^t)$.
Moreover, we have\\
$(i)$ $\cM=\cM^-$;\\
$(ii)$ $P(u)<0$ if and only if $t_u<0$;\\
$(iii)$ the map $u\mapsto t_u$ is of class $C^1$.
\end{lemma}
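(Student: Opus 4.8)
The plan is to exploit the fact that in the critical case $q=\bar q=p+p^2/N$ one has $\gamma_{\bar q}\bar q=p$, so that the $e^{pt}$ and $e^{\gamma_q q t}$ terms in $\Psi_u$ merge into a single exponential. Concretely, I would rewrite
$$\Psi_u(t)=\Big(\tfrac{1}{p}|\nabla u|_p^p-\tfrac{\mu}{q}|u|_{\bar q}^{\bar q}\Big)e^{pt}-\tfrac{1}{p^\ast}|u|_{p^\ast}^{p^\ast}e^{p^\ast t}=a_2e^{pt}-b_2e^{p^\ast t},$$
which puts $\Psi_u$ exactly into the form treated in Lemma \ref{lem5.2}. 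The whole statement then reduces to verifying the hypotheses $a_2>0$ and $b_2>0$ of that lemma, after which the existence and uniqueness of the maximizing $t_u$, together with $J(u^{t_u})=\max_{t\in\R}J(u^t)$, follow immediately, since $\Psi_u'(t_u)=0$ is equivalent to $u^{t_u}\in\cM$.

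First I would check positivity of the coefficients. Because $u\in S_c$ is nonzero, $b_2=\tfrac{1}{p^\ast}|u|_{p^\ast}^{p^\ast}>0$ is immediate. For $a_2$ the Gagliardo--Nirenberg inequality (Lemma \ref{lem1.1}) together with $\gamma_{\bar q}\bar q=p$ and $|u|_p=c$ gives $|u|_{\bar q}^{\bar q}\le C(\bar q)c^{(1-\gamma_{\bar q})\bar q}|\nabla u|_p^{p}$, whence
$$a_2\ge\Big(\tfrac{1}{p}-\tfrac{\mu}{q}C(\bar q)c^{(1-\gamma_{\bar q})\bar q}\Big)|\nabla u|_p^p.$$
Using $\gamma_{\bar q}=p/\bar q=p/q$, the bracket equals $\tfrac{1}{p}\big(1-\gamma_{\bar q}C(\bar q)\mu c^{(1-\gamma_{\bar q})\bar q}\big)$, which is strictly positive by \eqref{eq5.2}; hence $a_2>0$ for every $u\in S_c$, and Lemma \ref{lem5.2} applies.

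For the three numbered items I would argue as follows. For $(i)$, any $v\in\cM$ satisfies $\Psi_v'(0)=0$, so $0$ must be the unique critical point of $\Psi_v$ and thus its maximum, forcing $\Psi_v''(0)\le0$; since $\cM^0=\emptyset$ by Lemma \ref{lem5.3}, in fact $\Psi_v''(0)<0$, i.e. $v\in\cM^-$, while the reverse inclusion is trivial. For $(ii)$, note $P(u)=\Psi_u'(0)$; as $t_u$ is the unique maximum, $\Psi_u'$ is positive on $(-\infty,t_u)$ and negative on $(t_u,\infty)$, so $P(u)<0\iff\Psi_u'(0)<0\iff t_u<0$. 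For $(iii)$, I would set $\Phi(u,t)=\Psi_u'(t)$, which is $C^1$ with $\Phi(u,t_u)=0$ and $\partial_t\Phi(u,t_u)=\Psi_u''(t_u)$; using $(u^{t_u})^s=u^{t_u+s}$ one gets $\Psi_u''(t_u)=\Psi_{u^{t_u}}''(0)<0$ because $u^{t_u}\in\cM=\cM^-$, so the Implicit Function Theorem yields that $u\mapsto t_u$ is $C^1$, exactly as in Lemma \ref{lem4.3}$(iv)$.

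The main obstacle is the positivity of $a_2$: everything else is a direct consequence of the single-exponential structure, but $a_2>0$ is precisely where the smallness assumption \eqref{eq5.2} (equivalently $\mu c^{(1-\gamma_{\bar q})\bar q}<\alpha(\bar q)$) enters, and it must hold uniformly over all $u\in S_c$ so that Lemma \ref{lem5.2} is applicable for every $u$ at once.
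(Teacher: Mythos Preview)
Your proposal is correct and follows essentially the same approach as the paper: you exploit $\gamma_{\bar q}\bar q=p$ to collapse $\Psi_u$ into the two-exponential form of Lemma \ref{lem5.2}, use the Gagliardo--Nirenberg inequality together with \eqref{eq5.2} to secure $a_2>0$, and then deduce $(i)$--$(iii)$ via Lemma \ref{lem5.3} and the Implicit Function Theorem, exactly as the authors do. The only cosmetic difference is that in $(iii)$ you pass through the identity $\Psi_u''(t_u)=\Psi_{u^{t_u}}''(0)$ to invoke $(i)$, whereas the paper simply asserts $\Psi_u''(t_u)<0$ directly from the maximum; both are equivalent.
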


\begin{proof}[\bf Proof]
Recall that $\gamma_{\bar{q}}\bar{q}=2$.
It is immediate to check that, for $u\in S_c$,
$$\Psi_u(t)=J(u^t)
=\left(\frac{1}{p}|\nabla u|_p^p
-\frac{\mu}{\bar{q}}|u|_{\bar{q}}^{\bar{q}}\right)e^{pt}
-\frac{|u|_{p^\ast}^{p^\ast}}{p^\ast}e^{p^\ast t}$$
and
$$P(u^t)=\left(|\nabla u|_p^p
-\mu\gamma_{\bar{q}}|u|_{\bar{q}}^{\bar{q}}\right)e^{pt}
-|u|_{p^\ast}^{p^\ast}e^{{p^\ast} t}.$$
It is well known that $\Psi_u'(t)=0\Leftrightarrow P(u^t)=0\Leftrightarrow u^t\in\cM$.
Thus, exploiting again the fact that
$$\frac{1}{p}|\nabla u|_p^p-\frac{\mu}{\bar{q}}|u|_{\bar{q}}^{\bar{q}}
\geq\frac1p\left(1-\gamma_{\bar{q}}C(\bar{q})
\mu c^{(1-\gamma_{\bar{q}})\bar{q}}\right)|\nabla u|_p^p>0.$$
We deduce, from Lemma \ref{lem5.2}, that there exists a unique
$t_u\in\R$ such that $\Psi_u'(t_u)=0$ and
$J(u^{t_u})=\max_{t\in\R}J(u^t)$. Thereby,
$u^{t_u}\in \cM$.

If $u\in \cM$, then $t_u=0$ is the maximum point of $\Psi_u$.
According to Lemma \ref{lem5.3}, we have $\Psi_u''(0)<0$
and then $u\in \cM^-$. Hence $\cM=\cM^-$. Observing that
$\Psi_u'(t)<0$ if and only if $t>t_u$, we have $P(u)=\Psi_u'(0)<0$
if and only if $t_u<0$. Now, let us define the functional $\Phi: E\times\R\to\R$ by
$\Phi(u,t)=\Psi_u'(t)$, it follows that $\Phi$ is of class
$C^1$, $\Phi(u, t_u)=0$ and $\partial_t\Phi(u,t_u)=\Psi''_u(t_u)<0$.
Making use of the Implicit Function Theorem, we find that the map
$u\mapsto t_u$ is of class $C^1$.
\end{proof}

\begin{lemma}\label{lem5.5}We have
$m:=m(c,\mu)=\inf_{u\in\cM}J(u)>0$.
\end{lemma}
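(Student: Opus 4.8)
The plan is to bound $J$ below by a strictly positive constant on the whole Poho\v zaev manifold $\cM$. Since Lemma \ref{lem5.4} provides, for every $u\in S_c$, a dilation $u^{t_u}\in\cM$, the set $\cM$ is nonempty and $m$ is well defined; it therefore suffices to produce a uniform positive lower bound for $J(u)$ as $u$ ranges over $\cM$. The argument splits into two independent estimates: a lower bound $J(u)\ge \delta_0|\nabla u|_p^p$ on $\cM$, and a lower bound $|\nabla u|_p\ge\rho_0>0$ on $\cM$.

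First I would compute $J$ on $\cM$. For $u\in\cM$ one has $P(u)=0$, so subtracting $\frac{1}{p^\ast}P(u)$ from $J(u)$ and using $\gamma_{\bar q}\bar q=p$ together with the elementary identities $\frac1p-\frac{1}{p^\ast}=\frac1N$ and $-\frac1{\bar q}+\frac{\gamma_{\bar q}}{p^\ast}=-\frac{p^\ast-p}{\bar q p^\ast}$ (the critical term cancels) yields
$$J(u)=\frac1N|\nabla u|_p^p-\frac{\mu(p^\ast-p)}{\bar q p^\ast}|u|_{\bar q}^{\bar q}.$$
Applying Lemma \ref{lem1.1} in the form $|u|_{\bar q}^{\bar q}\le C(\bar q)c^{(1-\gamma_{\bar q})\bar q}|\nabla u|_p^{\gamma_{\bar q}\bar q}=C(\bar q)c^{(1-\gamma_{\bar q})\bar q}|\nabla u|_p^p$ then gives
$$J(u)\ge\Big[\frac1N-\frac{\mu(p^\ast-p)}{\bar q p^\ast}C(\bar q)c^{(1-\gamma_{\bar q})\bar q}\Big]|\nabla u|_p^p.$$

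The crucial point, and the step I expect to be the main obstacle, is that the bracketed coefficient is strictly positive. Here I would invoke the arithmetic identity $N(p^\ast-p)=pp^\ast$, which turns the positivity requirement $\mu C(\bar q)c^{(1-\gamma_{\bar q})\bar q}<\frac{\bar q p^\ast}{N(p^\ast-p)}$ into precisely $\mu c^{(1-\gamma_{\bar q})\bar q}<\frac{\bar q}{pC(\bar q)}=\alpha(\bar q)$, our standing assumption. This exact matching of the threshold $\alpha(\bar q)$ with the identity $N(p^\ast-p)=pp^\ast$ is the delicate ingredient, and it is the very reason the $L^p$-critical constant $\alpha(\bar q)$ is defined as $\bar q/(pC(\bar q))$. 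It follows that there is $\delta_0>0$ with $J(u)\ge\delta_0|\nabla u|_p^p$ for all $u\in\cM$.

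It remains to bound $|\nabla u|_p$ away from zero on $\cM$. Starting again from $P(u)=0$ and estimating the critical and $\bar q$-terms by the Sobolev and Gagliardo-Nirenberg inequalities, I would obtain $|\nabla u|_p^p\le S^{-p^\ast/p}|\nabla u|_p^{p^\ast}+\mu\gamma_{\bar q}C(\bar q)c^{(1-\gamma_{\bar q})\bar q}|\nabla u|_p^p$; absorbing the last term via \eqref{eq5.2} and dividing by $|\nabla u|_p^p>0$ (recall $u\in S_c$ forces $u\ne0$) gives
$$|\nabla u|_p\ge\big[(1-\mu\gamma_{\bar q}C(\bar q)c^{(1-\gamma_{\bar q})\bar q})S^{p^\ast/p}\big]^{1/(p^\ast-p)}=:\rho_0>0.$$
Combining the two estimates yields $m=\inf_{u\in\cM}J(u)\ge\delta_0\rho_0^p>0$, which completes the proof.
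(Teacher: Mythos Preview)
Your proof is correct and follows essentially the same approach as the paper: the paper likewise subtracts $\frac{1}{p^\ast}P(u)$ from $J(u)$, applies the Gagliardo--Nirenberg inequality, and uses the standing assumption (equivalently \eqref{eq5.2}) to get $J(u)\ge \delta_0|\nabla u|_p^p$, then derives the uniform lower bound on $|\nabla u|_p$ from $P(u)=0$ exactly as you do. The only cosmetic difference is that the paper writes the coefficient of $|u|_{\bar q}^{\bar q}$ as $\frac{\mu\gamma_{\bar q}}{N}$ rather than $\frac{\mu(p^\ast-p)}{\bar q p^\ast}$, which are equal via the identity $N(p^\ast-p)=pp^\ast$ that you spelled out.
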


\begin{proof}[\bf Proof]
Lemma \ref{lem5.4} implies that $\cM\neq\emptyset$.
We note that, for $u\in\cM$,
$$|\nabla u|_p^p
=|u|_{p^\ast}^{p^\ast}+\mu \gamma_{\bar{q}}|u|_{\bar{q}}^{\bar{q}}
\leq S^{-\frac{p^\ast}{p}}|\nabla u|_p^{p^\ast}
+\gamma _{\bar{q}}C({\bar{q}})\mu
c^{(1-\gamma_{\bar{q}}){\bar{q}}}|\nabla u|_p^p,$$
from which and \eqref{eq5.2} it results that
\begin{equation}\label{eq5.3}
\inf_{u\in\cM}|\nabla u|_p>0.
\end{equation}
Obviously, it is not difficult to check, for $u\in\cM$,
\begin{align*}J(u)&=J(u)-\frac{1}{p^\ast}P(u)
=\frac{1}{N}|\nabla u|_p^p
-\frac{\mu\gamma _{\bar{q}}}{N}|u|_{\bar{q}}^{\bar{q}}\\
&\geq\frac1N\left(1-\gamma _{\bar{q}}C({\bar{q}})
\mu c^{(1-\gamma_{\bar{q}}){\bar{q}}}\right)|\nabla u|_p^p,\end{align*}
which whence combined with \eqref{eq5.3} concludes the proof.
\end{proof}

\begin{lemma}\label{lem5.6}
There holds $m=m^\ast$.
\end{lemma}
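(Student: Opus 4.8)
The plan is to prove the two inequalities $m^\ast\le m$ and $m\ge m^\ast$ separately, using in both cases the fiber map $\Psi_u$ together with the continuous selection $u\mapsto t_u$ furnished by Lemma \ref{lem5.4}. Recall that by Lemma \ref{lem5.4} each $u\in S_c$ admits a unique $t_u$ with $u^{t_u}\in\cM$ realizing $\max_{t\in\R}J(u^t)$, that $\cM=\cM^-$, that $P(u)<0$ if and only if $t_u<0$, and that $u\mapsto t_u$ is of class $C^1$; moreover $m=\inf_{\cM}J>0$ by Lemma \ref{lem5.5}.

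For $m^\ast\le m$, I would fix an arbitrary $u\in\cM$ (so that $t_u=0$ and $J(u)=\max_{t\in\R}J(u^t)$) and build an admissible path along its fiber. Since $|\nabla u^t|_p=e^t|\nabla u|_p\to0$ as $t\to-\infty$, there is $t_-\ll0$ with $u^{t_-}\in D_{k_1}\subset\overline{D_{k_1}}$; since $J(u^t)\to-\infty$ as $t\to+\infty$, there is $t_+\gg0$ with $J(u^{t_+})\le0$. Setting $\gamma(s)=u^{(1-s)t_-+st_+}$, Lemma \ref{lem2.3} gives $\gamma\in C([0,1],S_c)$ and the two endpoint conditions show $\gamma\in\Gamma$. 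Then $\sup_{s\in[0,1]}J(\gamma(s))\le\max_{t\in\R}J(u^t)=J(u)$, whence $m^\ast\le J(u)$; taking the infimum over $u\in\cM$ yields $m^\ast\le m$.

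For the reverse inequality $m\ge m^\ast$, I would show that every $\gamma\in\Gamma$ must cross $\cM$. Consider the continuous map $\tau\mapsto t_{\gamma(\tau)}$ (continuous because $u\mapsto t_u$ is $C^1$ and $\gamma$ is continuous). At $\tau=0$ we have $\gamma(0)\in\overline{D_{k_1}}\subset D_{k_2}$, so $P(\gamma(0))>0$ by Lemma \ref{lem5.1}, and hence $t_{\gamma(0)}>0$ by Lemma \ref{lem5.4}$(ii)$. At $\tau=1$ we have $J(\gamma(1))\le0$; writing $\Psi_{\gamma(1)}(t)=Ae^{pt}-Be^{p^\ast t}$ with $A=\frac1p|\nabla\gamma(1)|_p^p-\frac{\mu}{\bar{q}}|\gamma(1)|_{\bar{q}}^{\bar{q}}$ and $B=\frac{1}{p^\ast}|\gamma(1)|_{p^\ast}^{p^\ast}$, the inequality $J(\gamma(1))=A-B\le0$ forces $P(\gamma(1))=pA-p^\ast B\le(p-p^\ast)B<0$, so $t_{\gamma(1)}<0$. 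By the intermediate value theorem there is $\tau^\ast\in(0,1)$ with $t_{\gamma(\tau^\ast)}=0$, i.e. $\gamma(\tau^\ast)\in\cM$, and therefore $\sup_{t\in[0,1]}J(\gamma(t))\ge J(\gamma(\tau^\ast))\ge m$. Since $\gamma\in\Gamma$ is arbitrary, $m^\ast\ge m$, completing the proof.

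The main obstacle is pinning down the signs of $t_{\gamma(0)}$ and $t_{\gamma(1)}$ needed to run the crossing argument. The endpoint at $\tau=0$ is handled by the mountain pass geometry (positivity of $P$ on $D_{k_2}$ from Lemma \ref{lem5.1}), while the endpoint at $\tau=1$ rests on the implication $J(u)\le0\Rightarrow P(u)<0$, which in this $L^p$-critical regime follows transparently from the explicit two-term structure of $\Psi_u$ recorded in Lemma \ref{lem5.4}, since $\gamma_{\bar{q}}\bar{q}=p$. Everything else reduces to the continuity of the dilation $(u,t)\mapsto u^t$.
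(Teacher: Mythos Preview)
Your proof is correct and follows essentially the same strategy as the paper: build a fiber path from any $u\in\cM$ to show $m^\ast\le m$, and show every $\gamma\in\Gamma$ must cross $\cM$ to get $m^\ast\ge m$. The only notable difference is in how you establish $P(\gamma(1))<0$: you exploit the explicit two-term form $J=A-B$, $P=pA-p^\ast B$ available in the $L^p$-critical case (since $\gamma_{\bar q}\bar q=p$), whereas the paper derives it from the inequality $J(u)-\tfrac{1}{p^\ast}P(u)\ge\tfrac{1}{N}\big(1-\gamma_{\bar q}C(\bar q)\mu c^{(1-\gamma_{\bar q})\bar q}\big)|\nabla u|_p^p>0$ via Gagliardo--Nirenberg and the smallness assumption \eqref{eq5.2}; your route is slightly more direct here, while the paper's computation is the one that carries over verbatim to the supercritical case in Lemma~\ref{lem5.12}. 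The paper also tracks the sign change of $P(\gamma(\tau))$ directly rather than of $t_{\gamma(\tau)}$, but this is an immaterial difference.
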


\begin{proof}[\bf Proof]
For any $u\in \cM$,  it is not difficult to check that
$$\lim_{t\to-\infty}|\nabla u^t|_2^2=0,\ \
\lim_{t\to +\infty}|\nabla u^t|_2^2=+\infty,\ \
\lim_{t\to +\infty}J(u^t)=-\infty.$$
Clearly, there exist $t_1=t_1(u)<0$ and $t_2=t_2(u)>0$ such
that $u^{t_1}\in D_{k_1}$, $u^{t_2}\in S_c\backslash D_{k_2}$
and $J(u^{t_2})<0$. We define for any $t\in[0,1]$ by $\gamma(t)=u^{(1-t)t_1+tt_2}$.
Clearly, in view of Lemma \ref{lem2.3}, $\gamma\in\Gamma$ and then
$\sup_{t\in[0,1]}J(\gamma(t))=J(u)$, from which it ensures
that $m\geq m^\ast$.

From now on we focus on $m\leq m^\ast$, it suffices to verify that
$\gamma([0,1])\cap\cM\neq\emptyset$ for any $\gamma\in\Gamma$.
Since
$$J(u)-\frac{1}{p^\ast}P(u)
\geq\frac1N\left(1-\gamma _{\bar{q}}C({\bar{q}})
\mu c^{(1-\gamma_{\bar{q}}){\bar{q}}}\right)|\nabla u|_p^p>0,
\ \ \text{for}\ u\in S_c,$$
it follows that $P(\gamma(1))< p^\ast J(\gamma(1))\leq0$ for any $\gamma\in\Gamma$.
Note in particular that $P(\gamma(0))>0$.
Furthermore, if we define
$t_1=\inf\{t\in[0,1): P(\gamma(s))<0\ \text{for}\ s\in(t,1]\}$,
then there must be $P(\gamma(t_1))=0$, which whence implies that
$\gamma(t_1)\in\gamma([0,1])\cap\cM$. The proof is complete.
\end{proof}

\begin{lemma}\label{lem5.7} Suppose that $p<N^{2/3}$. Then
$0<m<\frac{1}{N}S^{N/p}$.
\end{lemma}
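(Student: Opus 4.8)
The plan is to prove the two inequalities separately. The lower bound $0<m$ is already contained in Lemma \ref{lem5.5}, so the entire task reduces to producing a test configuration that certifies $m<\frac{1}{N}S^{N/p}$. This is the essential compactness threshold, and I will reach it with the concentrating family $v_\va$ from \eqref{eq2.2}, as in the Brezis--Nirenberg / Soave approach.

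Since $v_\va\in S_c$, Lemma \ref{lem5.4} furnishes a unique $t_{v_\va}$ with $v_\va^{t_{v_\va}}\in\cM$ and $J(v_\va^{t_{v_\va}})=\max_{t\in\R}J(v_\va^t)$, whence $m\le\max_{t}J(v_\va^t)$. Writing $s=e^t$ and using $\gamma_{\bar{q}}\bar{q}=p$, the fiber map takes the form $J(v_\va^t)=\big(\frac{1}{p}|\nabla v_\va|_p^p-\frac{\mu}{\bar{q}}|v_\va|_{\bar{q}}^{\bar{q}}\big)s^p-\frac{1}{p^\ast}|v_\va|_{p^\ast}^{p^\ast}s^{p^\ast}$. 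The coefficient of $s^p$ is strictly positive on all of $S_c$ by the computation already carried out in Lemma \ref{lem5.4} (it is bounded below by $\frac{1}{p}(1-\gamma_{\bar{q}}C(\bar{q})\mu c^{(1-\gamma_{\bar{q}})\bar{q}})|\nabla v_\va|_p^p>0$), so this is a function $\alpha s^p-\beta s^{p^\ast}$ with $\alpha,\beta>0$ whose maximum over $s>0$ I can compute in closed form. Abbreviating $A=|\nabla v_\va|_p^p$, $B=|v_\va|_{p^\ast}^{p^\ast}$, $D=|v_\va|_{\bar{q}}^{\bar{q}}$, this gives $m\le\frac{1}{N}\big((A-\frac{p\mu}{\bar{q}}D)/B^{(N-p)/N}\big)^{N/p}$.

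Next I insert the sharp bubble asymptotics. Because $|u_\va|_{p^\ast}=1$ and $v_\va=a_\va u_\va$, one has $B^{(N-p)/N}=a_\va^p$, so $A/B^{(N-p)/N}=|\nabla u_\va|_p^p=S+O(\va^{(N-p)/p})$ by \eqref{eq2.3}, while $D/B^{(N-p)/N}=a_\va^{\bar{q}-p}|u_\va|_{\bar{q}}^{\bar{q}}$. Hence the bracket equals $S+O(\va^{(N-p)/p})-\frac{p\mu}{\bar{q}}a_\va^{\bar{q}-p}|u_\va|_{\bar{q}}^{\bar{q}}$, and it only remains to show that the negative perturbation dominates the Sobolev error as $\va\to0^+$. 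The pivotal observation is that $p<N^{2/3}$ is equivalent to $p^3<N^2$, which is exactly the inequality $\bar{q}>p^\ast(1-\frac{1}{p})$; this places $\bar{q}$ in the first regime of \eqref{eq2.4}, yielding $|u_\va|_{\bar{q}}^{\bar{q}}$ of order $\va^{p(p-1)/N}$. Tracking $a_\va=c/|u_\va|_p$ through the corresponding estimate for $|u_\va|_p$ in \eqref{eq2.4}, the full $\mu$-term has order $\va^{\beta}$ with $\beta=0$ when $p\le\sqrt{N}$ and $\beta=(p^2-N)/N$ when $p>\sqrt{N}$.

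The conclusion then follows from the elementary comparison $\beta<\frac{N-p}{p}$: in the first case $\beta=0<\frac{N-p}{p}$ trivially, and in the second case $\frac{p^2-N}{N}<\frac{N-p}{p}$ rearranges to $p^3<N^2$, i.e. again $p<N^{2/3}$. Thus for $\va$ small the subtracted term strictly exceeds the $O(\va^{(N-p)/p})$ error, the bracket is $<S$, and raising to the power $N/p$ preserves the strict inequality, giving $m<\frac{1}{N}S^{N/p}$. I expect the main obstacle to be precisely this last bookkeeping step: the regime of \eqref{eq2.4} controlling $|u_\va|_p^p$ switches at $p=\sqrt{N}$, so the order $\va^\beta$ of the perturbation must be computed in two sub-cases, one must verify that in both the strict comparison $\beta<\frac{N-p}{p}$ collapses to the single hypothesis $p<N^{2/3}$, and one must use the estimates in \eqref{eq2.4} as genuine lower bounds on the $\mu$-term rather than merely as $O$-upper bounds.
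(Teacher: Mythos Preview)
Your proof is correct and follows essentially the same route as the paper's (which defers to the method of Lemma~\ref{lem5.13}): both test $m$ against the concentrating family $v_\va\in S_c$, reduce the strict inequality to showing that the $\mu$-perturbation dominates the Sobolev error $O(\va^{(N-p)/p})$, and verify this via the exponent comparison that collapses exactly to $p^3<N^2$. The only minor difference is that you exploit the special feature $\gamma_{\bar q}\bar q=p$ to maximize the two-term fiber map in closed form, whereas the paper's template bounds the gradient/critical part by Lemma~\ref{lem2.1} and controls the $\mu$-term using the uniform lower bound $\inf_{\cM}|\nabla u|_p>0$ from \eqref{eq5.3}; both lead to the same ratio $|u_\va|_{\bar q}^{\bar q}/|u_\va|_p^{\bar q-p}$ and the same case split at $p=\sqrt{N}$. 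One small refinement: at the borderline $p=\sqrt{N}$ the perturbation carries a factor $|\ln\va|^{-1/p}$ rather than being of exact order $\va^0$, but this still dominates any positive power of $\va$, so your conclusion is unaffected.
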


\begin{proof}[\bf Proof]
The proof is similar to the following Lemma \ref{lem5.13}.
\end{proof}
\begin{lemma}\label{lem5.8}
Suppose that $q=p+p^2/N$ and $\mu c^{(1-\gamma_q)q}<\alpha(q)$. Then there exists a Palais-Smale sequence
$\{u_n\}\subset S_c$ for $J|_{S_c}$ at the level $m^\ast$
with the following properties
$$|u_n^-|_p\to 0\ \ \text{and}\ \ P(u_n)\to 0,\ \ \text{as}\ n\to\infty.$$
\end{lemma}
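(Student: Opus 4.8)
The plan is to reproduce the stretched-functional argument of Lemma~\ref{lem4.15}, now driven by the $L^p$-critical geometry of Lemma~\ref{lem5.1} and the structural facts $\cM=\cM^-$, $\inf_{\cM}J=m$ and $m=m^\ast$ from Lemmas~\ref{lem5.4}--\ref{lem5.6}. First I would introduce the stretched functional $\tilde{J}\colon E\times\R\to\R$, $\tilde{J}(u,t)=J(u^t)$, together with the minimax class
$$\widetilde{\Gamma}=\left\{\tilde{\gamma}\in C([0,1],S_c\times\R):\tilde{\gamma}(0)\in\overline{D_{k_1}}\times\{0\},\ \tilde{\gamma}(1)\in\{u\in S_c:J(u)\le0\}\times\{0\}\right\}.$$
As in Lemma~\ref{lem4.15}, the correspondences $\gamma\mapsto(\gamma,0)$ and $(\tilde{\gamma}_1,\tilde{\gamma}_2)\mapsto\tilde{\gamma}_1^{\tilde{\gamma}_2}$, the latter continuous by Lemma~\ref{lem2.3}, give a bijection between $\Gamma$ and $\widetilde{\Gamma}$ preserving the maxima of $J$ and $\tilde{J}$ respectively, so that $m^\ast=\inf_{\tilde{\gamma}\in\widetilde{\Gamma}}\sup_{t\in[0,1]}\tilde{J}(\tilde{\gamma}(t))$.

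Next I would establish the crossing property. For $\tilde{\gamma}\in\widetilde{\Gamma}$ set $P_{\tilde{\gamma}}(\tau)=P(\tilde{\gamma}_1(\tau)^{\tilde{\gamma}_2(\tau)})$. Since $\tilde{\gamma}(0)$ lies in $\overline{D_{k_1}}\subset D_{k_2}$, Lemma~\ref{lem5.1} gives $P_{\tilde{\gamma}}(0)>0$; since $J(\tilde{\gamma}_1(1)^{\tilde{\gamma}_2(1)})\le0$ and, exactly as in Lemma~\ref{lem5.5}, $J(u)-\frac{1}{p^\ast}P(u)\ge\frac1N\bigl(1-\gamma_{\bar{q}}C(\bar{q})\mu c^{(1-\gamma_{\bar{q}})\bar{q}}\bigr)|\nabla u|_p^p>0$ on $S_c$, one gets $P_{\tilde{\gamma}}(1)<p^\ast J(\tilde{\gamma}_1(1)^{\tilde{\gamma}_2(1)})\le0$. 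By continuity $P_{\tilde{\gamma}}$ vanishes at some $\tau_{\tilde{\gamma}}\in(0,1)$, so the path meets $\cM=\cM^-$, whence $\sup_{[0,1]}\tilde{J}\ge\inf_{\cM}J=m=m^\ast$. This is precisely the mechanism already used in the proof of Lemma~\ref{lem5.6}, now transported to the stretched setting.

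Then I would apply Lemma~\ref{lem2.4} with $X=S_c\times\R$, $\mathcal{F}=\{\tilde{\gamma}([0,1]):\tilde{\gamma}\in\widetilde{\Gamma}\}$, extended boundary $B=(\overline{D_{k_1}}\times\{0\})\cup(\{u\in S_c:J(u)\le0\}\times\{0\})$, and $F=\{(u,s)\in S_c\times\R:\tilde{J}(u,s)\ge m^\ast\}$. One checks that $\mathcal{F}$ is homotopy-stable with extended boundary $B$ (any admissible $\eta$ fixes the two endpoints of a path), that the crossing point supplies $(A\cap F)\setminus B\neq\emptyset$, and that $\sup_B\tilde{J}\le\max\{\sup_{\overline{D_{k_1}}}J,0\}<\inf_{\partial D_{k_2}}J\le m^\ast\le\inf_F\tilde{J}$ by Lemma~\ref{lem5.1}; this verifies hypotheses $(1)$ and $(2)$. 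Choosing nonnegative nearly optimal paths $\gamma_n\in\Gamma$ with $\sup_{[0,1]}J(\gamma_n)\le m^\ast+n^{-2}$, replacing $\gamma_n$ by $|\gamma_n|$ if needed, and setting $A_n=\gamma_n([0,1])\times\{0\}$, Lemma~\ref{lem2.4} yields $(v_n,t_n)$ with $\tilde{J}(v_n,t_n)\to m^\ast$, $(\tilde{J}|_{S_c\times\R})'(v_n,t_n)\to0$ and $\operatorname{dist}((v_n,t_n),A_n)\to0$.

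Finally, with $u_n=v_n^{t_n}\in S_c$ I would transfer the information as at the end of Lemma~\ref{lem4.15}: $J(u_n)=\tilde{J}(v_n,t_n)\to m^\ast$; testing the $t$-component gives $P(u_n)=\partial_t\tilde{J}(u_n,0)\to0$; testing the $S_c$-component through the isomorphism of Lemma~\ref{lem2.2} gives $(J|_{S_c})'(u_n)\to0$; and $\operatorname{dist}((v_n,t_n),A_n)\to0$ forces $|t_n|\to0$ and $v_n$ close to the nonnegative set, so $|u_n^-|_p=|v_n^-|_p\to0$. The main obstacle is not the crossing itself, which the sign change $P_{\tilde{\gamma}}(0)>0>P_{\tilde{\gamma}}(1)$ makes automatic, but the verification that the abstract scheme of Lemma~\ref{lem2.4} genuinely delivers a Palais--Smale sequence carrying the extra Poho\v zaev condition $P(u_n)\to0$; this is exactly what the extra $\R$-variable in $\tilde{J}$ is designed to produce, and where the identifications $\cM=\cM^-$, $\inf_{\cM}J=m$ and $m=m^\ast$ are indispensable to pin the minimax value.
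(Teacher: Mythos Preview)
Your proposal is correct and follows exactly the approach the paper intends: the paper's own proof of Lemma~\ref{lem5.8} is the single line ``The proof is similar to the proof of Lemma~\ref{lem4.15},'' and you have faithfully carried out that adaptation, replacing the boundary $(\cM^+\times\{0\})\cup(J^{2m(c)}\times\{0\})$ by $(\overline{D_{k_1}}\times\{0\})\cup(\{J\le0\}\times\{0\})$ and the crossing argument of Lemma~\ref{lem4.12} by the sign change $P_{\tilde{\gamma}}(0)>0>P_{\tilde{\gamma}}(1)$ coming from Lemma~\ref{lem5.1} and the identity $J-\tfrac{1}{p^\ast}P>0$ used in Lemmas~\ref{lem5.5}--\ref{lem5.6}. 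The verification of the hypotheses of Lemma~\ref{lem2.4} and the extraction of $\{u_n\}$ with $P(u_n)\to0$ and $|u_n^-|_p\to0$ are handled exactly as in Lemma~\ref{lem4.15}.
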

\begin{proof}[\bf Proof]
The proof is similar to the proof of Lemma \ref{lem4.15}.
\end{proof}

\begin{proof}[\bf Proof of Theorem \ref{the1.2}]
According to Lemmas \ref{lem5.6} and \ref{lem5.8}, there
exists a sequence $\{u_n\}\subset S_c$ with the following
properties
$$J(u_n)\to m,\ \ (J|_{S_c})'(u_n)\to 0,\ \ |u_n^-|_p\to 0,\ \
P(u_n)\to 0,\ \ \text{as}\ n\to\infty.$$
If alternative $(i)$ in Lemma \ref{lem3.3} occurs, then
$u_n\rightharpoonup u$ in $E$ for some $u\neq0$ and, by applying
\eqref{eq3.5} and \eqref{eq5.2},
\begin{align*}
m-\frac{1}{N}S^{N/p}&\geq J(u)=\frac{1}{N}|\nabla u|_p^p
-\frac{\mu\gamma_{\bar{q}}}{N}|u|_{\bar{q}}^{\bar{q}}\\
&\geq\frac1N\left(1-\gamma_{\bar{q}}C({\bar{q}})
\mu c^{(1-\gamma_{\bar{q}}){\bar{q}}}\right)|\nabla u|_p^p
>0,
\end{align*}
yielding a contradiction with Lemma \ref{lem5.7}. Therefore,
alternative $(ii)$ in Lemma \ref{lem3.3} holds true and
hence $u_n\to u$ in $E$ with $u$ being a ground state
solution of \eqref{eq1.1} and \eqref{eq1.3} for some $\lambda<0$.
Moreover, we conclude by observing that $|u_n^-|_p\to 0$ as
$n\to\infty$ and arguing as the proof of Theorem \ref{the1.1}(Part I), that $u$ is positive. The proof
is complete.
\end{proof}

\subsection{The case $p+p^2/N<q<p^\ast$}

This subsection is devoted to consider the case
$p+p^2/N<q<p^\ast$,
which indicates that $p<\gamma_q q<p^\ast$. Similar to Lemma
\ref{lem5.2}, we have the following lemma.

\begin{lemma}\label{lem5.9}
Let $a_3,\,b_3>0$, $d_3\in \R$ and $p<\tau<p^\ast$. Then
the function
$$\tilde{\xi}(t)=a_3e^{pt}-b_3e^{p^\ast t}-d_3 e^{\tau t},
\ \ t\in\R$$
has a unique critical point at which $\tilde{\xi}$ achieves its
maximum.
\end{lemma}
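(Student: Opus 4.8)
The plan is to mimic the strategy behind Lemma \ref{lem5.2}, but since here $d_3$ may have either sign a direct monotonicity analysis of $\tilde{\xi}'$ is awkward; instead I would show that \emph{every} critical point of $\tilde{\xi}$ is automatically a strict local maximum, and then deduce both uniqueness and the maximizing property from this single fact. Concretely, I will (a) compute $\tilde{\xi}''$ at an arbitrary critical point and use the critical point relation to eliminate the $d_3$-term, obtaining a manifestly negative expression; (b) establish, via the behaviour of $\tilde{\xi}$ at $\pm\infty$, that a global maximum is attained at some interior point; and (c) combine these to conclude there is exactly one critical point.

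For step (a), note $\tilde{\xi}'(t)=p a_3 e^{pt}-p^\ast b_3 e^{p^\ast t}-\tau d_3 e^{\tau t}$ and $\tilde{\xi}''(t)=p^2 a_3 e^{pt}-(p^\ast)^2 b_3 e^{p^\ast t}-\tau^2 d_3 e^{\tau t}$. If $t_0$ is any critical point, then $\tilde{\xi}'(t_0)=0$ gives $\tau d_3 e^{\tau t_0}=p a_3 e^{p t_0}-p^\ast b_3 e^{p^\ast t_0}$; substituting this into $\tilde{\xi}''(t_0)$ yields
$$\tilde{\xi}''(t_0)=-p(\tau-p)a_3 e^{p t_0}-p^\ast(p^\ast-\tau)b_3 e^{p^\ast t_0}.$$
Because $p<\tau<p^\ast$ and $a_3,b_3>0$, both summands are strictly negative, so $\tilde{\xi}''(t_0)<0$ regardless of the sign of $d_3$. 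This is the crucial point and the only place where the exponent ordering $p<\tau<p^\ast$ is used.

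For step (b), as $t\to-\infty$ the leading term is $a_3 e^{pt}$ (the smallest exponent), so $\tilde{\xi}(t)\to 0^+$ and in particular $\tilde{\xi}$ is positive for $t$ near $-\infty$; as $t\to+\infty$ the term $-b_3 e^{p^\ast t}$ (the largest exponent) dominates, so $\tilde{\xi}(t)\to-\infty$. Hence $M:=\sup_{t\in\R}\tilde{\xi}(t)>0$, and since there is $R>0$ with $\tilde{\xi}(t)<M$ whenever $|t|>R$, the value $M$ is attained on the compact interval $[-R,R]$ at some $t_0\in\R$, which is necessarily a critical point.

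For step (c), suppose for contradiction that $\tilde{\xi}$ had two distinct critical points $t_1<t_2$. By step (a) both are strict local maxima, so the minimum of $\tilde{\xi}$ over $[t_1,t_2]$ is strictly smaller than the endpoint values and is therefore attained at an interior point $t_3\in(t_1,t_2)$; then $\tilde{\xi}'(t_3)=0$ while $\tilde{\xi}''(t_3)\geq 0$, contradicting step (a). Thus $\tilde{\xi}$ has a unique critical point, which by step (b) is the global maximum, completing the proof. The only mild obstacle is the arbitrary sign of $d_3$, which is handled precisely by the substitution in step (a); everything else is routine.
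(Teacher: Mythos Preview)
Your proof is correct. The paper does not actually write out a proof of Lemma~\ref{lem5.9}; it only says ``Similar to Lemma~\ref{lem5.2}'', which points towards a direct monotonicity analysis of $\tilde{\xi}'$ (factor out $e^{pt}$ and study the remaining expression). That route works but, as you anticipated, requires a small case split on the sign of $d_3$: for $d_3\ge 0$ the reduced equation $p a_3=p^\ast b_3 e^{(p^\ast-p)t}+\tau d_3 e^{(\tau-p)t}$ has a strictly increasing right-hand side, while for $d_3<0$ one must observe that the right-hand side tends to $0^-$ as $t\to-\infty$ and to $+\infty$ as $t\to+\infty$ with a single turning point, so it still crosses the positive level $p a_3$ exactly once.

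Your substitution $\tilde{\xi}''(t_0)=\tilde{\xi}''(t_0)-\tau\tilde{\xi}'(t_0)$ at a critical point to obtain a sign-definite expression is cleaner because it eliminates $d_3$ entirely and treats all cases at once. It is worth noting that this very trick is exactly what the paper itself uses one lemma later, in the proof of Lemma~\ref{lem5.10}, to show $\cM=\cM^-$ (there written as $\Psi_u''(0)=\Psi_u''(0)-pP(u)<0$). So your approach is in the same spirit as the paper; you have simply pushed the key computation back into Lemma~\ref{lem5.9} itself, which makes the uniqueness statement self-contained and independent of the sign of $d_3$.
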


\begin{lemma}\label{lem5.10}
For every $u\in S_c$, there exists a unique $t_u\in\R$ such
that $u^{t_u}\in \cM$ and
$J(u^{t_u})=\max_{t\in\R}J(u^t)$. Moreover, we have\\
$(i)$ $\cM=\cM^-$;\\
$(ii)$ $P(u)<0$ if and only if $t_u<0$;\\
$(iii)$ the map $u\mapsto t_u$ is of class $C^1$.
\end{lemma}

\begin{proof}[\bf Proof]
For $u\in S_c$, we have
$$\Psi_u(t)=J(u^t)=\frac{e^{pt}}{p}|\nabla u|_p^p
-\frac{e^{p^\ast t}}{p^\ast}|u|_{p^\ast}^{p^\ast}
-\mu\frac{e^{\gamma_q q t}}{q}|u|_q^q$$
and
$$P(u^t)=e^{pt}|\nabla u|_p^p
-e^{{p^\ast} t}|u|_{p^\ast}^{p^\ast}-\mu \gamma_qe^{\gamma_q q t}|u|_q^q.$$
It is obvious that
$\Psi_u'(t)=0\Leftrightarrow P(u^t)=0\Leftrightarrow u^t\in\cM$.
Standard computations together with Lemma \ref{lem5.9} shows that there exists a unique $t_u\in\R$
such that $\Psi_u'(t_u)=0$ and
$J(u^{t_u})=\max_{t\in\R}J(u^t)$. Hence, $u^{t_u}\in \cM$.

We remark that, letting $u\in\cM$, a straightforward computation shows
$$\Psi''_u(0)=\Psi''_u(0)-pP(u)
=-(p^\ast-p)|u|_{p^\ast}^{p^\ast}+(p-\gamma_q q)\mu \gamma_q|u|_q^q
<0,$$
whence it follows that $u\in\cM^-$ and henceforth $\cM=\cM^{-}$. Observing
that $\Psi_u'(t)<0$ if and only if $t>t_u$, we have
$P(u)=\Psi_u'(0)<0$ if and only if $t_u<0$. If we define
$\Phi: E\times\R\to\R$ by $\Phi(u,t)=\Psi_u'(t)$, then
$\Phi$ is of class $C^1$, $\Phi(u, t_u)=0$ and
$\partial_t\Phi(u,t_u)=\Psi''_u(t_u)<0$. Now, let us remark that, by applying the Implicit
Function Theorem, the map $u\mapsto t_u$ is of class
$C^1$.
\end{proof}

\begin{lemma}\label{lem5.11}
$m:=m(c,\mu)=\inf_{u\in\cM}J(u)>0$.
\end{lemma}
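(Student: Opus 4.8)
The plan is to adapt the scheme of Lemma~\ref{lem5.5} to the supercritical regime $p<\gamma_q q<p^\ast$. By Lemma~\ref{lem5.10} the manifold $\cM$ is nonempty, so $m$ is well defined, and the task reduces to producing a uniform positive lower bound for $J$ over $\cM$. I would organize the argument in two steps: first show that the gradient norm stays uniformly away from zero on $\cM$, then extract a representation of $J|_{\cM}$ that is manifestly coercive in $|\nabla u|_p$.

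First I would prove $\inf_{u\in\cM}|\nabla u|_p>0$. For $u\in\cM$ the constraint $P(u)=0$ reads
$$|\nabla u|_p^p=|u|_{p^\ast}^{p^\ast}+\mu\gamma_q|u|_q^q.$$
Since $u\neq0$ forces $|\nabla u|_p>0$, I apply the Sobolev inequality to the first term and the Gagliardo--Nirenberg inequality of Lemma~\ref{lem1.1} to the second, and divide by $|\nabla u|_p^p$ to obtain
$$1\le S^{-\frac{p^\ast}{p}}|\nabla u|_p^{p^\ast-p}+\mu\gamma_q C(q)c^{(1-\gamma_q)q}|\nabla u|_p^{\gamma_q q-p}.$$
The decisive feature of this case is that \emph{both} exponents $p^\ast-p$ and $\gamma_q q-p$ are strictly positive, so the right-hand side tends to $0$ as $|\nabla u|_p\to0^+$; hence there is $\delta>0$ such that $|\nabla u|_p\ge\delta$ for every $u\in\cM$.

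Next I would use the scaling combination that annihilates the $L^q$-term. A direct computation gives, for any $u\in\cM$ (where $P(u)=0$),
$$J(u)=J(u)-\frac{1}{\gamma_q q}P(u)=\Big(\frac1p-\frac{1}{\gamma_q q}\Big)|\nabla u|_p^p+\Big(\frac{1}{\gamma_q q}-\frac{1}{p^\ast}\Big)|u|_{p^\ast}^{p^\ast}.$$
Because $p<\gamma_q q<p^\ast$, both coefficients are strictly positive, so in particular $J(u)\ge\big(\frac1p-\frac{1}{\gamma_q q}\big)|\nabla u|_p^p$. Combining this with the gradient bound from the first step yields
$$m=\inf_{u\in\cM}J(u)\ge\Big(\frac1p-\frac{1}{\gamma_q q}\Big)\delta^p>0,$$
which is the claim.

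The only genuinely delicate point is the uniform gradient lower bound in the first step; once it is secured, the positivity of the two coefficients in the $L^q$-free representation of $J$ makes the conclusion immediate. I note that, in contrast with the critical case $q=p+p^2/N$ treated in Lemma~\ref{lem5.5}, no smallness hypothesis of the form \eqref{eq5.2} is required here, in agreement with $\alpha(q)=+\infty$.
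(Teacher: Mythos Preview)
Your proof is correct and follows essentially the same route as the paper: both establish $\inf_{u\in\cM}|\nabla u|_p>0$ from the constraint $P(u)=0$ via the Sobolev and Gagliardo--Nirenberg inequalities, then use the identity $J(u)=J(u)-\frac{1}{\gamma_q q}P(u)$ to bound $J$ below by a positive multiple of $|\nabla u|_p^p$. Your presentation of the gradient lower bound (dividing through and noting that both exponents $p^\ast-p$ and $\gamma_q q-p$ are positive) is in fact slightly more explicit than the paper's.
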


\begin{proof}[\bf Proof]
Lemma \ref{lem5.10} indicates that $\cM\neq\emptyset$.
Notice in particular that, for $u\in\cM$,
\begin{align*}
0&=|\nabla u|_p^p-|u|_{p^\ast}^{p^\ast}-\mu \gamma _q|u|_q^q\\
&\geq |\nabla u|_p^p-S^{-\frac{p^\ast}{p}}|\nabla u|_p^{p^\ast}
-\mu\gamma_qC(q)c^{(1-\gamma_q)q}|\nabla u|_p^{\gamma_q q},
\end{align*}
whence it follows that
\begin{equation}\label{eq5.4}
\inf_{u\in\cM}|\nabla u|_p>0.
\end{equation}
Therefore, it is immediate to check that, for $u\in\cM$,
\begin{align*}J(u)&=J(u)-\frac{1}{\gamma_qq}P(u)
=\frac{\gamma_qq-p}{p\gamma_qq}|\nabla u|_p^p+\frac{{p^\ast}-\gamma_qq}{{p^\ast}\gamma_qq}|u|_{p^\ast}^{p^\ast}\\
&\geq\frac{\gamma_qq-p}{p\gamma_qq}|\nabla u|_p^p.\end{align*}
This, together with \eqref{eq5.4}, implies the desired conclusion.
\end{proof}

\begin{lemma}\label{lem5.12}
$m=m^\ast$, where $m^\ast$ is defined in \eqref{eq5.1}.
\end{lemma}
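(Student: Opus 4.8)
The plan is to reproduce the template of Lemma~\ref{lem5.6}, establishing the two inequalities $m^\ast\le m$ and $m\le m^\ast$ separately. The structural feature that makes everything work in the present range $p+p^2/N<q<p^\ast$ is that $p<\gamma_q q<p^\ast$, so that by Lemma~\ref{lem5.10} each fiber map $\Psi_u$ has a single global maximum, attained exactly on $\cM$, and $\cM=\cM^-$.

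First I would prove $m^\ast\le m$. Fix $u\in\cM$. Using the behaviour of the dilation, namely $|\nabla u^t|_p\to0$ as $t\to-\infty$ and $J(u^t)\to-\infty$ as $t\to+\infty$, one can select $t_1<0<t_2$ with $u^{t_1}\in D_{k_1}$ and $u^{t_2}\in S_c\setminus D_{k_2}$ satisfying $J(u^{t_2})<0$. By Lemma~\ref{lem2.3} the path $\gamma(t)=u^{(1-t)t_1+tt_2}$ is continuous, so $\gamma\in\Gamma$. Since $u\in\cM$ means $t_u=0$ is the global maximum point of $\Psi_u$ and $0\in[t_1,t_2]$, we get $\sup_{t\in[0,1]}J(\gamma(t))=\max_{s\in\R}J(u^s)=J(u)$. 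Taking the infimum over $u\in\cM$ then yields $m^\ast\le m$.

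The heart of the matter is $m\le m^\ast$, for which it suffices to show $\gamma([0,1])\cap\cM\neq\emptyset$ for every $\gamma\in\Gamma$. The key algebraic observation, valid on all of $S_c$, is
\begin{equation*}
J(u)-\frac{1}{\gamma_q q}P(u)
=\frac{\gamma_q q-p}{p\gamma_q q}|\nabla u|_p^p
+\frac{p^\ast-\gamma_q q}{p^\ast\gamma_q q}|u|_{p^\ast}^{p^\ast}>0,
\end{equation*}
both coefficients being strictly positive precisely because $p<\gamma_q q<p^\ast$. Hence $P(u)<\gamma_q q\,J(u)$ for every $u\in S_c$, which forces $P(\gamma(1))<\gamma_q q\,J(\gamma(1))\le0$, while $P(\gamma(0))>0$ since $\gamma(0)\in\overline{D_{k_1}}\subset D_{k_2}$ and $P>0$ on $D_{k_2}$ by Lemma~\ref{lem5.1}. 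By continuity of $\tau\mapsto P(\gamma(\tau))$, setting $t_0=\inf\{t\in[0,1):P(\gamma(s))<0\ \text{for}\ s\in(t,1]\}$ gives $P(\gamma(t_0))=0$, i.e.\ $\gamma(t_0)\in\cM$. Consequently $\sup_{t\in[0,1]}J(\gamma(t))\ge J(\gamma(t_0))\ge\inf_{\cM}J=m$, and taking the infimum over $\gamma\in\Gamma$ produces $m\le m^\ast$.

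The only point requiring care, and the sole difference from the critical case $q=p+p^2/N$ treated in Lemma~\ref{lem5.6}, is that the relevant splitting of $J$ here uses the weight $1/(\gamma_q q)$ rather than $1/p^\ast$; I would simply verify that this still gives a strictly positive lower bound for $J-\tfrac{1}{\gamma_q q}P$, which it does since both surviving coefficients are positive, guaranteeing the sign change of $P$ along every admissible path. No new compactness argument is needed, and combining the two inequalities gives $m=m^\ast$.
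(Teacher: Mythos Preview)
Your proof is correct and follows essentially the same route as the paper's: both argue exactly as in Lemma~\ref{lem5.6}, constructing a dilation path from any $u\in\cM$ to show $m^\ast\le m$ and then showing every $\gamma\in\Gamma$ crosses $\cM$ via a sign-change argument on $P$. The only cosmetic difference is that you use the identity $J-\tfrac{1}{\gamma_q q}P>0$ (as in Lemma~\ref{lem5.11}) to force $P(\gamma(1))<0$, whereas the paper uses $J-\tfrac{1}{p}P=\tfrac{1}{N}|u|_{p^\ast}^{p^\ast}+\tfrac{\mu}{q}(\tfrac{\gamma_q q}{p}-1)|u|_q^q>0$; either weight works since $p<\gamma_q q<p^\ast$.
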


\begin{proof}[\bf Proof]
The statement follows straightforwardly from the fact that
\begin{align*}J(u)-\frac{1}{p}P(u)=\frac{1}{N}|u|_{p^\ast}^{p^\ast}
+\frac{\mu}{q}\left(\frac{\gamma_qq}{p}-1\right)|u|_q^q> 0,\ \ \text{for}\ u\in S_c.\end{align*}
Arguing as in the proof of Lemma \ref{lem5.6}, we immediately
achieve the result.
\end{proof}

\begin{lemma}\label{lem5.13}
Assume that $p\leq N^{2/3}$. Then $0<m<\frac{1}{N}S^{N/p}$.
\end{lemma}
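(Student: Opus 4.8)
The plan is to use the rescaled Talenti bubbles $v_\va\in S_c$ from Section 2 as test functions on the Poho\v zaev manifold. Since Lemma \ref{lem5.11} already yields $m>0$, it remains only to prove $m<\frac{1}{N}S^{N/p}$. By Lemma \ref{lem5.10}, for each fixed small $\va>0$ there is a unique dilation $v_\va^{t}$ landing on $\cM$ and realizing $\max_{t\in\R}J(v_\va^t)$; consequently $m\le\max_{t\in\R}J(v_\va^t)$, and the whole argument reduces to estimating this one–dimensional maximum from above.

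To disentangle the $L^p$--normalizing constant $a_\va=c/|u_\va|_p$ (which blows up as $\va\to0^+$) from the genuine bubble profile, I would substitute $\sigma=a_\va e^{t}$, using $|u_\va|_{p^\ast}=1$, to rewrite the fiber map as
$$J(v_\va^t)=\frac{\sigma^p}{p}|\nabla u_\va|_p^p-\frac{\sigma^{p^\ast}}{p^\ast}-\frac{\mu}{q}\sigma^{\gamma_qq}D_\va,\qquad D_\va:=c^{q(1-\gamma_q)}\frac{|u_\va|_q^q}{|u_\va|_p^{q(1-\gamma_q)}}>0.$$
The first two terms are precisely the function whose maximum over $\sigma$ equals $\frac{1}{N}(|\nabla u_\va|_p^p)^{N/p}=\frac{1}{N}S^{N/p}+O(\va^{\frac{N-p}{p}})$ by \eqref{eq2.3} (this is the content of Lemma \ref{lem2.1}). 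I would then check that the maximizer $\sigma_\va$ of the full expression stays in a fixed compact subset of $(0,\infty)$: it cannot escape to $0$, since otherwise $J(v_\va^{t_\va})\to0$, contradicting $J(v_\va^{t_\va})\ge m>0$; and it cannot escape to $+\infty$ because the $-\sigma^{p^\ast}$ term dominates. Hence $\sigma_\va^{\gamma_qq}\ge c_0>0$ uniformly, which gives
$$m\le\max_{t\in\R}J(v_\va^t)\le\frac{1}{N}S^{N/p}+O(\va^{\frac{N-p}{p}})-C\,D_\va.$$

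It then suffices to prove that $D_\va$ decays strictly more slowly than $\va^{(N-p)/p}$. The hypothesis $p\le N^{2/3}$ is used exactly to guarantee $q>p+p^2/N\ge p^\ast(1-\tfrac1p)$, so that $|u_\va|_q^q$ obeys the fastest branch of \eqref{eq2.4}, namely $|u_\va|_q^q=O(\va^{\theta(p-1)/p})$ with $\theta:=q(1-\gamma_q)=\frac{Np-q(N-p)}{p}>0$. Splitting according to whether $p^2<N$, $p^2=N$, or $p^2>N$ (which decides which branch of \eqref{eq2.4} controls $|u_\va|_p^p$), a direct computation gives that $D_\va$ tends to a positive constant when $p^2\le N$ (with a harmless $|\ln\va|$ correction when $p^2=N$), while $D_\va$ has decay exponent $\frac{\theta(p^2-N)}{p^2}$ when $p^2>N$. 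In the last case the requirement that this exponent be smaller than $\frac{N-p}{p}$ reduces, after inserting $q>p+p^2/N$, to $p^3<N^2$, i.e. exactly to $p\le N^{2/3}$. In every case $\va^{(N-p)/p}=o(D_\va)$, so the negative term $-C\,D_\va$ absorbs the error $O(\va^{(N-p)/p})$ and $m<\frac{1}{N}S^{N/p}$ follows for $\va$ small, which together with Lemma \ref{lem5.11} completes the proof.

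The main obstacle is this last quantitative step: because the $L^p$--normalization forces $a_\va\to\infty$, one must track the competition between $|u_\va|_q^q$ and $|u_\va|_p^{q(1-\gamma_q)}$ through the three branches of \eqref{eq2.4}, and the strict inequality $m<\frac1N S^{N/p}$ survives only on the threshold $p\le N^{2/3}$, the endpoint $q\to(p+p^2/N)^+$ being the borderline case that the strict bound $q>p+p^2/N$ just rescues. The other delicate point is to justify that $\sigma_\va$ stays bounded away from $0$ and $\infty$ uniformly in $\va$ despite the blow-up of $a_\va$, which I would settle by passing to the limit in the critical-point equation for $\sigma\mapsto J(v_\va^{t})$.
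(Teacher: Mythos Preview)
Your proposal is correct and follows essentially the same route as the paper. Both arguments test $m$ against $J(v_\va^{t_\va})$ with $v_\va\in S_c$ the rescaled bubble and $v_\va^{t_\va}\in\cM$, bound the ``principal part'' by $\frac{1}{N}S^{N/p}+O(\va^{(N-p)/p})$ via Lemma~\ref{lem2.1}, and then show that the $L^q$-correction dominates this error; the case split you describe according to $p^2\lessgtr N$ is exactly the paper's Cases~1 and~2, and your quantity $D_\va=c^{(1-\gamma_q)q}|u_\va|_q^q/|u_\va|_p^{(1-\gamma_q)q}$ is precisely what the paper estimates. The only cosmetic difference is that you substitute $\sigma=a_\va e^{t}$ to decouple the blowing-up constant $a_\va$ from the dilation, and then bound the maximizer $\sigma_\va$ directly via $m>0$ and the Poho\v zaev relation, whereas the paper bounds the equivalent quantity $|\nabla v_\va^{t_\va}|_p$ from below through \eqref{eq5.4}; your phrasing is a bit cleaner but the content is the same.
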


\begin{proof}[\bf Proof]
{\bf Case 1.} $p\leq N^{1/2}$. 
Let $v_\va=(\tau)^{\frac{N-p}{p}}u_\va(\tau x)$, where $\tau=c^{-1}|u_\va|_p$.
Then
$$|v_\va|_p=c,\ |\nabla v_\va|_2=|\nabla u_\va|_2,\ |v_\va|_{p^\ast}=|u_\va|_{p^\ast}$$
and for $p<q<p^\ast$.
Lemmas \ref{lem5.10}
and \ref{lem5.11} imply there exists a unique $t_\va\in\R$ such
that $v_\va^{t_\va} \in\cM$ and $0<m\leq J(v_\va^{t_\va})$.
To conclude the proof, it suffices to show that
$J(v_\va^{t_\va})<\frac{1}{N}S^{N/p}$ for $\va>0$ sufficiently small.

According to $p\leq N^{1/2}$, there holds $p^\ast(1-1/p)\leq p<q$.
As a consequence of \eqref{eq2.2}, \eqref{eq2.3}, \eqref{eq2.4} and \eqref{eq5.4},
there exists a constant $C>0$ such that, as $\va\to 0^+$,
\begin{align*}
e^{\gamma_q q t_{\va}}|v_\va|_q^q
&=e^{\gamma_q q t_{\va}}\cdot
\tau^{-(1-\gamma_q)q}|u_\va|_q^q\\
&=\frac{|\nabla v_\va^{t_\va}|_p^{\gamma_q q}}{|\nabla u_\va|_p^{\gamma_q q}}
\cdot c^{(1-\gamma_q)q}\frac{|u_\va|_q^q}{|u_\va|_p^{(1-\gamma_q)q}}
\geq \left\{\begin{array}{ll}
C,\ & p<N^{1/2},\\
C|\ln \va|^{\frac{-(1-\gamma_q)q}{p}},&p=N^{1/2},
\end{array}\right.
\end{align*}
which jointly with Lemma \ref{lem2.1} leads to
\begin{align*}J(v_\va^{t_\va})
&=\frac{a_\va^pe^{pt_\va}}{p}|\nabla u_\va|_p^p
-\frac{a_\va^{p^\ast}e^{{p^\ast} t_\va}}{p^\ast}|u_\va|_{p^\ast}^{p^\ast}
-\mu\frac{e^{\gamma_q q t_\va}}{q}|v_\va|_q^q\\
&\leq \frac{1}{N}S^{N/p}+O(\va^{\frac{N-p}{p}})-\frac{\mu }{q}\left\{\begin{array}{ll}
C,\ & p<N^{1/2},\\
C|\ln \va|^{\frac{-(1-\gamma_q)q}{p}},&p=N^{1/2}.
\end{array}\right.\end{align*}
As a consequence, letting $\va>0$ small yields the desired conclusion.

{\bf Case 2.} $N^{1/2}<p<N^{2/3}$.
Let $v_\va$ be as in \eqref{eq2.1}. Lemmas \ref{lem5.10}
and \ref{lem5.11} imply there exists a unique $t_\va\in\R$ such
that $v_\va^{t_\va} \in\cM$ and $0<m\leq J(v_\va^{t_\va})$.
To conclude the proof, it suffices to show that
$J(v_\va^{t_\va})<\frac{1}{N}S^{N/p}$ for $\va>0$ sufficiently small.

According to $N^{1/2}<p<N^{2/3}$, there holds $p<p^\ast(1-1/p)<p+p^2/N$.
As a consequence of \eqref{eq2.2}, \eqref{eq2.3}, \eqref{eq2.4} and \eqref{eq5.4},
there exists a constant $C>0$ such that, as $\va\to 0^+$,
\begin{align*}
e^{\gamma_q q t_{\va}}|v_\va|_q^q
&=a_\va^{\gamma_q q} e^{\gamma_q q t_{\va}}\cdot
a_\va^{(1-\gamma_q)q}|u_\va|_q^q\\
&=\frac{|\nabla v_\va^{t_\va}|_p^{\gamma_q q}}{|\nabla u_\va|_p^{\gamma_q q}}
\cdot c^{(1-\gamma_q)q}\frac{|u_\va|_q^q}{|u_\va|_p^{(1-\gamma_q)q}}
\geq C \va^{\frac{(pN+pq-qN)(p^2-N)}{p^3}},
\end{align*}
which jointly with Lemma \ref{lem2.1} leads to
\begin{align*}J(v_\va^{t_\va})
&=\frac{a_\va^pe^{pt_\va}}{p}|\nabla u_\va|_p^p
-\frac{a_\va^{p^\ast}e^{{p^\ast} t_\va}}{p^\ast}|u_\va|_{p^\ast}^{p^\ast}
-\mu\frac{e^{\gamma_q q t_\va}}{q}|v_\va|_q^q\\
&\leq \frac{1}{N}S^{N/p}+O(\va^{\frac{N-p}{p}})-\frac{\mu }{q}C\va^{\frac{(pN+pq-qN)(p^2-N)}{p^3}}.\end{align*}
The condition $N^{1/2}<p<N^{2/3}$ implies that $\frac{(pN+pq-qN)(p^2-N)}{p^3}<\frac{N-p}{p}$.
As a consequence, letting $\va>0$ small yields the desired conclusion.
\end{proof}
\begin{lemma}\label{lem5.14}
Suppose that $p+p^2/N< q<p^\ast$. Then there exists a Palais-Smale sequence
$\{u_n\}\subset S_c$ for $J|_{S_c}$ at the level $m^\ast$
with the following properties
$$|u_n^-|_p\to 0\ \ \text{and}\ \ P(u_n)\to 0,\ \ \text{as}\ n\to\infty.$$
\end{lemma}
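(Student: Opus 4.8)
The plan is to run the \emph{stretched-functional} argument of Lemma \ref{lem4.15} essentially verbatim, with the two-component structure of $\cM$ replaced by the single mountain-pass geometry furnished by Lemmas \ref{lem5.1}, \ref{lem5.10}, \ref{lem5.11} and \ref{lem5.12}. First I introduce on $E\times\R$ the functional
$$\tilde{J}(u,t)=J(u^t)=\frac{e^{pt}}{p}|\nabla u|_p^p-\frac{e^{p^\ast t}}{p^\ast}|u|_{p^\ast}^{p^\ast}-\mu\frac{e^{\gamma_q q t}}{q}|u|_q^q,$$
and lift the class $\Gamma$ associated with \eqref{eq5.1} to
$$\widetilde{\Gamma}=\left\{\tilde{\gamma}\in C([0,1],S_c\times\R):\tilde{\gamma}(0)\in\overline{D_{k_1}}\times\{0\},\ \tilde{\gamma}(1)\in J^0\times\{0\}\right\},$$
where $J^0=\{u\in S_c:J(u)\leq0\}$. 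Since $(\gamma,0)\in\widetilde{\Gamma}$ whenever $\gamma\in\Gamma$, and conversely $\tilde{\gamma}_1(\cdot)^{\tilde{\gamma}_2(\cdot)}\in\Gamma$ whenever $\tilde{\gamma}=(\tilde{\gamma}_1,\tilde{\gamma}_2)\in\widetilde{\Gamma}$ (using Lemma \ref{lem2.3} and $\tilde{\gamma}_2(0)=\tilde{\gamma}_2(1)=0$), the two minimax values coincide, so that $m^\ast=\inf_{\tilde{\gamma}\in\widetilde{\Gamma}}\sup_{t\in[0,1]}\tilde{J}(\tilde{\gamma}(t))$.

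Next I apply the linking theorem, Lemma \ref{lem2.4}, with $X=S_c\times\R$, $\mathcal{F}=\{\tilde{\gamma}([0,1]):\tilde{\gamma}\in\widetilde{\Gamma}\}$, extended boundary $B=(\overline{D_{k_1}}\times\{0\})\cup(J^0\times\{0\})$ and $F=\{(u,s)\in S_c\times\R:\tilde{J}(u,s)\geq m^\ast\}$. Condition (2) is immediate: $\tilde{J}\geq m^\ast$ on $F$ by definition, while $\sup_B\tilde{J}<m^\ast$ because $\sup_{\overline{D_{k_1}}}J<\inf_{\partial D_{k_2}}J\leq m^\ast$ by Lemma \ref{lem5.1} and $J\leq0<m^\ast$ on $J^0$ (this also yields $F\cap B=\emptyset$). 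The crucial point is condition (1): for $\tilde{\gamma}\in\widetilde{\Gamma}$ consider $P_{\tilde{\gamma}}(\tau)=P(\tilde{\gamma}_1(\tau)^{\tilde{\gamma}_2(\tau)})$. Since $\tilde{\gamma}_1(0)\in\overline{D_{k_1}}\subset D_{k_2}$, Lemma \ref{lem5.1} gives $P_{\tilde{\gamma}}(0)=P(\tilde{\gamma}_1(0))>0$; while Lemma \ref{lem5.12} supplies the identity $J(u)-\tfrac1p P(u)>0$ on $S_c$, hence $P(u)<pJ(u)$, so from $J(\tilde{\gamma}_1(1))\leq0$ we get $P_{\tilde{\gamma}}(1)=P(\tilde{\gamma}_1(1))<0$. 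Continuity then yields $\tau_{\tilde{\gamma}}\in(0,1)$ with $\tilde{\gamma}_1(\tau_{\tilde{\gamma}})^{\tilde{\gamma}_2(\tau_{\tilde{\gamma}})}\in\cM$, whence $\tilde{J}(\tilde{\gamma}(\tau_{\tilde{\gamma}}))\geq\inf_{\cM}J=m=m^\ast$ by Lemmas \ref{lem5.11} and \ref{lem5.12}; thus every image meets $F$ away from $B$.

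Lemma \ref{lem2.4}, applied to almost-minimizing paths $\gamma_n\in\Gamma$ with $\sup_{[0,1]}J(\gamma_n)\leq m^\ast+n^{-2}$ which I take nonnegative (legitimate since $J$, $P$ are even and $\overline{D_{k_1}}$, $J^0$ are symmetric), then produces $\{(v_n,t_n)\}\subset S_c\times\R$ with $\tilde{J}(v_n,t_n)\to m^\ast$, $(\tilde{J}|_{S_c\times\R})'(v_n,t_n)\to0$ and $|t_n|+\mathrm{dist}(v_n,\gamma_n([0,1]))\to0$. Setting $u_n=v_n^{t_n}\in S_c$ gives $J(u_n)=\tilde{J}(v_n,t_n)\to m^\ast$; testing the derivative against $({\bf 0},1)$ yields $P(u_n)=\partial_t\tilde{J}(v_n,t_n)\to0$; testing against $(w^{-t_n},0)$ for $w\in T_{u_n}S_c$ and invoking the scaling isomorphism of tangent spaces (Lemma \ref{lem2.2}) yields $(J|_{S_c})'(u_n)\to0$; and $|u_n^-|_p=|(v_n^-)^{t_n}|_p=|v_n^-|_p\to0$ by scale-invariance of the $L^p$-norm together with $\mathrm{dist}(v_n,\gamma_n([0,1]))\to0$. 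I expect the only genuine subtlety to be condition (1): the sign change of $P$ along lifted paths rests entirely on the coercivity identity of Lemma \ref{lem5.12}, which holds precisely because $p<\gamma_q q<p^\ast$ in this regime, and on $P>0$ on $D_{k_2}$ from Lemma \ref{lem5.1}; every remaining step is a direct transcription of Lemma \ref{lem4.15}.
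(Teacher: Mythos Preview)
Your proposal is correct and follows exactly the approach the paper intends: the paper's proof simply reads ``similar to the proof of Lemma \ref{lem4.15}'', and you have spelled out precisely the natural adaptation, replacing the boundary $(\cM^+\times\{0\})\cup(J^{2m(c)}\times\{0\})$ by $(\overline{D_{k_1}}\times\{0\})\cup(J^0\times\{0\})$ and verifying the intersection condition (1) via $P>0$ on $D_{k_2}$ (Lemma \ref{lem5.1}) together with the identity $J-\tfrac1pP>0$ on $S_c$ from the proof of Lemma \ref{lem5.12}. All remaining steps---equivalence of minimax values, homotopy stability, extraction of the sequence, and the passage to $u_n=v_n^{t_n}$---are verbatim transcriptions of Lemma \ref{lem4.15}, as you note.
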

\begin{proof}[\bf Proof]
The proof is similar to the proof of Lemma \ref{lem4.15}.
\end{proof}

\begin{proof}[\bf Proof of Theorem \ref{the1.3}]
We can apply Lemmas \ref{lem5.12} and \ref{lem5.14} to conclude that there is a sequence
$\{u_n\}\subset S_c$ with the following properties
$$J(u_n)\to m,\ \ (J|_{S_c})'(u_n)\to 0,\ \ |u_n^-|_p\to 0,
\ \ P(u_n)\to 0,\ \ \text{as}\ n\to\infty.$$
According to Lemma \ref{lem5.13}, one of the two alternatives
in Lemma \ref{lem3.3} occurs. Suppose that alternative
$(i)$ occurs, it is elementary to realize that $u_n\rightharpoonup u$ in $E$ for some
$u\neq 0$ and, by virtue of \eqref{eq3.5},
\begin{align*}
m-\frac{1}{N}S^{N/p}
&\geq J(u)
=J(u)-\frac{1}{p}P(u)\\
&=\frac{1}{N}|u|_{p^\ast}^{p^\ast}
+\frac{\mu}{q}\left(\frac{\gamma_qq}{p}-1\right)|u|_q^q>0,
\end{align*}
which contradicts the result of Lemma \ref{lem5.13}. Therefore,
alternative $(ii)$ in Lemma \ref{lem3.3} holds true and
henforth $u_n\to u$ in $E$ with $u$ being a ground state
solution of \eqref{eq1.1} and \eqref{eq1.3} in $E$ for some
$\lambda<0$. Moreover, observing that $|u_n^-|_p\to 0$ as
$n\to\infty$ and
following the same argument of the proof of Theorem \ref{the1.1}(Part I), we deduce that $u$ is positive.
\end{proof}

\section{Asymptotic behavior of normalized solutions}
In this section, we study the asymptotic behavior of
normalized solutions obtained in Theorem \ref{the1.1}
as $\mu\to0^+$. For
simplicity of notations, we always write $m(\mu):=m(c,\mu)$,
$m^\ast(\mu):=m^\ast(c,\mu)$, $\cM_{\mu}:=\cM_{c,\mu}$
and $\cM_{\mu}^\pm:=\cM_{c,\mu}^\pm$ throughout this section.

\subsection{The case $\mu\to 0^+$}
Let $c>0$ and $p<q<p^\ast$ be fixed. We also assume that
$\mu>0$ satisfies \eqref{eq1.6}.

\begin{proof}[\bf Proof of Theorem \ref{the1.4}\,$(i)$]
In the case $p<q<p+p^2/N$, Theorem \ref{the1.1} states that
$u_{\mu}:=u_{c,\mu}$ is the interior local minimizer
of the functional $J_{\mu}$ on $D_{R_0}(c)$, where $R_0=R_0(c,\mu)$
is given by Lemma \ref{lem4.1}. It is easy to verify that
$R_0(c,\mu)\to 0$ as $\mu\to 0^+$ and thus
$|\nabla u_{\mu}|_p\to 0$ as $\mu\to 0^+$. Using Lemma
\ref{lem1.1} and the Sobolev inequality, we derive
\begin{align*}
0>m(\mu)
&=\frac{1}{p} |\nabla u_{\mu}|_p^p
-\frac{1}{p^\ast}|u_{\mu}|_{p^\ast}^{p^\ast}-\frac{\mu}{q}|u_{\mu}|_q^q\\
&\geq\frac{1}{p} |\nabla u_{\mu}|_p^p
-\frac{S^{-\frac{p^\ast}{p}}}{p^\ast}|\nabla u_{\mu}|_p^{p^\ast}
-\frac{\mu}{q}C(q)c^{(1-\gamma_q)q}|\nabla u_{\mu}|_p^{\gamma_q q},
\end{align*}
which implies that $m(\mu)\to 0$ as $\mu\to 0^+$ and the thesis follows.
\end{proof}

Next we consider the case where $\bar{q}=p+p^2/N\leq q<p^\ast$.

\begin{lemma}\label{lem6.1}
There holds
$$\inf_{u\in\cM_{\mu}}J_{\mu}(u)
=\inf_{u\in S_c}\max_{t\in\R}J_{\mu}(u^t).$$
\end{lemma}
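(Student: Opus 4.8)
The plan is to reduce the identity to the structural results on the fiber map already established in Lemmas \ref{lem5.4} and \ref{lem5.10}. Recall that in the range $\bar q = p+p^2/N \le q < p^\ast$ these lemmas assert that for every $u \in S_c$ there is a \emph{unique} $t_u \in \R$ for which $u^{t_u} \in \cM_\mu$, and that this $t_u$ is the global maximizer of $t \mapsto J_\mu(u^t)$, so that $\max_{t\in\R} J_\mu(u^t) = J_\mu(u^{t_u})$. The elementary fact underlying everything is the identity $P_\mu(u^t) = (\Psi_u^\mu)'(t)$ (so that $P_\mu(u) = (\Psi_u^\mu)'(0)$), whence $u^t \in \cM_\mu$ if and only if $t$ is a critical point of $\Psi_u^\mu$.

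First I would establish $\inf_{u\in\cM_\mu}J_\mu(u) \ge \inf_{u\in S_c}\max_{t\in\R}J_\mu(u^t)$. Given $u \in \cM_\mu$, the relation $P_\mu(u) = (\Psi_u^\mu)'(0) = 0$ shows that $0$ is a critical point of $\Psi_u^\mu$; by the uniqueness asserted in Lemmas \ref{lem5.4} and \ref{lem5.10} this forces $t_u = 0$, so that $J_\mu(u) = \Psi_u^\mu(0) = \max_{t\in\R}J_\mu(u^t)$. Since $\cM_\mu \subset S_c$, bounding $\max_{t}J_\mu(u^t)$ below by the infimum of the same quantity over all of $S_c$, and then taking the infimum over $u \in \cM_\mu$, yields the inequality.

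For the reverse inequality I would start from an arbitrary $u \in S_c$. Lemmas \ref{lem5.4} and \ref{lem5.10} supply $t_u$ with $u^{t_u} \in \cM_\mu$ and $\max_{t\in\R}J_\mu(u^t) = J_\mu(u^{t_u}) \ge \inf_{v\in\cM_\mu}J_\mu(v)$. Taking the infimum over $u \in S_c$ gives $\inf_{u\in S_c}\max_{t}J_\mu(u^t) \ge \inf_{v\in\cM_\mu}J_\mu(v)$, and combining the two inequalities proves the identity.

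There is essentially no hard step here: all the analytic content — the existence, uniqueness and maximizing property of $t_u$, together with $\cM_\mu = \cM_\mu^-$ — has already been carried out in Lemmas \ref{lem5.4} and \ref{lem5.10}, so this lemma is a short bookkeeping consequence. The only point demanding care is to invoke uniqueness correctly in the first inequality, namely to conclude $t_u = 0$ for $u \in \cM_\mu$ rather than merely that $0$ is some critical point of $\Psi_u^\mu$; this is precisely what the emptiness of the degenerate strata in this regime guarantees.
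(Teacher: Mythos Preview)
Your proof is correct and follows essentially the same approach as the paper: both directions are obtained by invoking the existence, uniqueness, and maximizing property of $t_u$ from Lemmas \ref{lem5.4} and \ref{lem5.10}, yielding $J_\mu(u)=\max_{t}J_\mu(u^t)$ for $u\in\cM_\mu$ and $\max_t J_\mu(u^t)=J_\mu(u^{t_u})\in J_\mu(\cM_\mu)$ for $u\in S_c$. Your write-up is in fact slightly more explicit than the paper's about why $t_u=0$ when $u\in\cM_\mu$.
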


\begin{proof}[\bf Proof]
We see from Lemmas \ref{lem5.4} and \ref{lem5.10} that
$$J_{\mu}(u)=\max_{t\in\R}J_{\mu}(u^t)
\geq\inf_{u\in S_c}\max_{t\in\R}J_{\mu}(u^t),\ \
\text{for}\ u\in \cM_{\mu}.$$
On the other hand, if $u\in S_c$ then there is a unique
$t_{u,\mu}:=t_{u,c,\mu}\in\R$ such that
$u^{t_{u,\mu}}\in\cM_{\mu}$ and
$$\max_{t\in\R}J_{\mu}(u^t)=J_{\mu}(u^{t_{u,\mu}})
\geq\inf_{u\in\cM_{\mu}}J_{\mu}(u).$$
The desired conclusion follows easily.
\end{proof}

\begin{lemma}\label{lem6.2}
We have
$$\inf_{u\in\cM_{0}}J_{0}(u)
=\inf_{u\in S_c}\max_{t\in\R}J_{0}(u^t)=\frac{1}{N}S^{N/p}.$$
\end{lemma}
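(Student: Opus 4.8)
The plan is to reduce the computation to the geometry of the fiber map and then to the Sobolev quotient. For fixed $u\in S_c$ the map $\Psi^0_u(t)=J_0(u^t)=\frac{e^{pt}}{p}|\nabla u|_p^p-\frac{e^{p^\ast t}}{p^\ast}|u|_{p^\ast}^{p^\ast}$ has exactly the form $a_2e^{pt}-b_2e^{p^\ast t}$ treated in Lemma \ref{lem5.2}, with $a_2=|\nabla u|_p^p/p>0$ and $b_2=|u|_{p^\ast}^{p^\ast}/p^\ast>0$. Hence $\Psi^0_u$ has a unique critical point $t_u$, which is its global maximum; since $(\Psi^0_u)'(t)=0$ is equivalent to $P_0(u^t)=0$, i.e. to $u^t\in\cM_0$, this gives $u^{t_u}\in\cM_0$ and $J_0(u^{t_u})=\max_{t\in\R}J_0(u^t)$. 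The two-sided argument of Lemma \ref{lem6.1} (the case $\mu=0$ is entirely analogous, Lemma \ref{lem5.2} replacing Lemmas \ref{lem5.4} and \ref{lem5.10}) then yields $\inf_{u\in\cM_0}J_0(u)=\inf_{u\in S_c}\max_{t\in\R}J_0(u^t)$, so it remains to evaluate the right-hand side.

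Next I would compute the maximum explicitly. Solving $(\Psi^0_u)'(t_u)=0$ gives $e^{(p^\ast-p)t_u}=|\nabla u|_p^p/|u|_{p^\ast}^{p^\ast}$, and substituting back, using $\frac1p-\frac1{p^\ast}=\frac1N$, $\frac{p^\ast}{p^\ast-p}=\frac{N}{p}$ and $\frac{p^\ast(N-p)}{p}=N$, a direct calculation gives
\begin{equation*}
\max_{t\in\R}J_0(u^t)=\Psi^0_u(t_u)=\frac1N\left(\frac{|\nabla u|_p^p}{|u|_{p^\ast}^p}\right)^{N/p}.
\end{equation*}
Therefore, since $t\mapsto\frac1N t^{N/p}$ is continuous and increasing, minimizing over $S_c$ amounts to minimizing the scale-invariant Sobolev quotient $Q(u):=|\nabla u|_p^p/|u|_{p^\ast}^p$:
\begin{equation*}
\inf_{u\in S_c}\max_{t\in\R}J_0(u^t)=\frac1N\Big(\inf_{u\in S_c}Q(u)\Big)^{N/p}.
\end{equation*}

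It then remains to prove $\inf_{u\in S_c}Q(u)=S$. The lower bound $Q(u)\ge S$ holds for every $u\in S_c\subset D^{1,p}(\R^N)\setminus\{0\}$ by the very definition of $S$. For the matching upper bound I would test with the concentrating family $v_\va=a_\va u_\va\in S_c$ from \eqref{eq2.2}: since $Q$ is invariant under multiplication by the positive scalar $a_\va$, one has $Q(v_\va)=|\nabla u_\va|_p^p/|u_\va|_{p^\ast}^p$, and recalling that $|u_\va|_{p^\ast}=1$ together with \eqref{eq2.3} gives $Q(v_\va)=|\nabla u_\va|_p^p=S+O(\va^{(N-p)/p})\to S$ as $\va\to0^+$; hence $\inf_{S_c}Q\le S$, so $\inf_{S_c}Q=S$, and the two displayed identities combine to yield $\inf_{\cM_0}J_0=\frac1N S^{N/p}$. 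The only delicate point is this last step: the Sobolev infimum $S$ is never attained on $S_c$ (the extremals $U_{\va,y}$ fail to lie in $L^p(\R^N)$ unless $N>p^2$), so the equality $\inf_{S_c}Q=S$ must be obtained through the approximating family $v_\va$ rather than by exhibiting a minimizer, and one must verify that these normalized truncations indeed belong to $S_c$ and realize the quotient $S$ in the limit.
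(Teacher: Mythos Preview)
Your proof is correct and follows essentially the same route as the paper: both establish the first equality by the argument of Lemma~\ref{lem6.1}, compute the fiber maximum explicitly (the paper writes it as $\frac{a_1^{p^\ast/(p^\ast-p)}}{N\,b_1^{p/(p^\ast-p)}}$, which is your $\frac1N Q(u)^{N/p}$), obtain the lower bound directly from the definition of $S$, and get the upper bound by testing with $v_\va$ (the paper packages this last step as Lemma~\ref{lem2.1}). Your explicit identification of the Sobolev quotient and the remark that the infimum over $S_c$ is not attained are nice clarifications, but the underlying argument is the same.
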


\begin{proof}[\bf Proof]
Similar to Lemma \ref{lem6.1}, we have
$$\inf_{u\in\cM_{0}}J_{0}(u)
=\inf_{u\in S_c}\max_{t\in\R}J_{0}(u^t).$$
Furthermore, Lemma \ref{lem2.1} indicates that
$\inf_{u\in S_c}\max_{t\in\R}J_{0}(u^t)\leq\frac{1}{N}S^{N/p}$. Let
$u\in S_c$ and denote
$$a_1=|\nabla u|_p^p,\ \ b_1=|u|_{p^\ast}^{p^\ast}.$$
It follows from Lemma \ref{lem2.1} and the definition of $S$
that
$$\max_{t\in\R}J_{0}(u^t)
=\frac{a_1^\frac{p^\ast}{p^\ast-p}}{Nb_1^\frac{p}{p^\ast-p}}\geq\frac{1}{N}S^{N/p}.$$
Hence, we have
$\inf_{u\in S_c}\max_{t\in\R}J_{0}(u^t)\geq\frac{1}{N}S^{N/p}$,
concluding the proof.
\end{proof}

\begin{lemma}\label{lem6.3}
Let $\bar{\mu}>0$ be such that
$$\bar{\mu} c^{(1-\gamma_q)q}<\alpha(q).$$
Then the function $\mu\mapsto m(\mu)$ is non-increasing on
$[0,\bar{\mu}]$.
\end{lemma}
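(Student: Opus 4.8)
The plan is to exploit the minimax characterization of $m(\mu)$ as an infimum, over $u\in S_c$, of the maxima of $J_{\mu}$ along the fibers $t\mapsto u^t$, which is available throughout the range $[0,\bar{\mu}]$, together with the elementary pointwise monotonicity of $J_{\mu}$ in the parameter $\mu$. Since the $\mu$-dependence of $J_{\mu}$ enters only through a single term that is subtracted, the monotonicity of $m$ should follow without any compactness or variational difficulty.

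First I would observe that the hypothesis $\bar{\mu} c^{(1-\gamma_q)q}<\alpha(q)$ guarantees $\mu c^{(1-\gamma_q)q}\le\bar{\mu} c^{(1-\gamma_q)q}<\alpha(q)$ for every $\mu\in(0,\bar{\mu}]$, so that the assumptions of Lemmas \ref{lem5.4} and \ref{lem5.10} are met for each such $\mu$. Consequently, Lemma \ref{lem6.1} yields
$$m(\mu)=\inf_{u\in\cM_{\mu}}J_{\mu}(u)=\inf_{u\in S_c}\max_{t\in\R}J_{\mu}(u^t),\qquad \mu\in(0,\bar{\mu}],$$
while Lemma \ref{lem6.2} provides exactly the same identity for $\mu=0$. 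Hence this minimax formula holds uniformly on the whole interval $[0,\bar{\mu}]$.

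Next, fix $u\in S_c$ and recall that
$$J_{\mu}(u^t)=\frac{e^{pt}}{p}|\nabla u|_p^p-\frac{e^{p^\ast t}}{p^\ast}|u|_{p^\ast}^{p^\ast}-\mu\frac{e^{\gamma_q q t}}{q}|u|_q^q.$$
Since only the last term depends on $\mu$, and it enters with a minus sign multiplying the nonnegative factor $\frac{e^{\gamma_q q t}}{q}|u|_q^q\ge0$, the map $\mu\mapsto J_{\mu}(u^t)$ is non-increasing for each fixed $t$. Therefore, for $0\le\mu_1\le\mu_2\le\bar{\mu}$, choosing $t_2$ to realize $\max_{t\in\R}J_{\mu_2}(u^t)$ gives
$$\max_{t\in\R}J_{\mu_2}(u^t)=J_{\mu_2}(u^{t_2})\le J_{\mu_1}(u^{t_2})\le\max_{t\in\R}J_{\mu_1}(u^t).$$

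Finally, taking the infimum over $u\in S_c$ and invoking the minimax identity at both parameters, I obtain
$$m(\mu_2)=\inf_{u\in S_c}\max_{t\in\R}J_{\mu_2}(u^t)\le\inf_{u\in S_c}\max_{t\in\R}J_{\mu_1}(u^t)=m(\mu_1),$$
which is precisely the claimed monotonicity. There is no genuine analytic obstacle here; the only point requiring care is the verification that the minimax characterization is valid at every $\mu\in[0,\bar{\mu}]$ — in particular at the endpoint $\mu=0$, where Lemma \ref{lem6.2} rather than Lemma \ref{lem6.1} must be invoked — after which the conclusion is immediate because both values of $m$ are expressed as infima of the same family of fiber maxima.
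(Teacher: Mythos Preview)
Your proof is correct and follows essentially the same approach as the paper: both use the minimax characterization $m(\mu)=\inf_{u\in S_c}\max_{t\in\R}J_{\mu}(u^t)$ from Lemmas~\ref{lem6.1} and~\ref{lem6.2}, together with the pointwise monotonicity of $J_{\mu}(u^t)$ in $\mu$, to conclude. Your version is simply more explicit in verifying the applicability of the minimax formula across $[0,\bar{\mu}]$ and in spelling out the comparison at the maximizer $t_2$.
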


\begin{proof}[\bf Proof]
Let $0\leq\mu_1<\mu_2\leq\bar{\mu}$. By Lemma \ref{lem6.1} and \ref{lem6.2}, we have
$$m(\mu_2)
=\inf_{u\in S_c}\max_{t\in\R}J_{\mu_2}(u^t)
\leq\inf_{u\in S_c}\max_{t\in\R}J_{\mu_1}(u^t)
=m(\mu_1).$$
The proof is complete.
\end{proof}

\begin{proof}[\bf Proof of Theorem \ref{the1.4}\,$(ii)$]
Let $c>0$, and let $\bar{\mu}$ satisfy the assumption of Lemma \ref{lem6.3} for this choice of $c$.
In what follows, we will show that the family of positive radial ground states $\{u_{\mu}: 0<\mu<\bar{\mu}\}$
is bounded in $E$.
Note that $P_{\mu}(u_{\mu})=0$. If $q=\bar{q}=p+p^2/N$,
then
\begin{align*}
m(\mu)&=J_{\mu}(u_{\mu})-\frac{1}{p^\ast}P_{\mu}(u_{\mu})\\
&=\frac{1}{N}|\nabla u_{\mu}|_p^p-\frac{p\mu}{N\bar{q}}|u_{\mu}|_{\bar{q}}^{\bar{q}}\\
&\geq\left(\frac{1}{N}-\frac{\gamma_{\bar{q}}C(\bar{q})}{N}
\mu c^{(1-\gamma_{\bar{q}})\bar{q}}\right)|\nabla u_{\mu}|_p^{p};
\end{align*}
while if $p+p^2/N< q<p^\ast$, then
\begin{align*}
m(\mu)&=J_{\mu}(u_{\mu})-\frac{1}{\gamma_qq}P_{\mu}(u_{\mu})\\
&=\Big(\frac{1}{p}-\frac{1}{\gamma_q q}\Big)|\nabla u_{\mu}|_p^p
+\Big(\frac{1}{\gamma_q q}-\frac{1}{p^\ast}\Big)|u_{\mu}|_{p^\ast}^{p^\ast}\\
&\geq\Big(\frac{1}{p}-\frac{1}{\gamma_q q}\Big)|\nabla u_{\mu}|_p^p.
\end{align*}
We see from this and Lemma \ref{lem6.3} that $\{u_{\mu}\}$
is bounded in $E$ with respect to $\mu\in(0,\bar{\mu})$.

Assume up to a subsequence that $u_{\mu}\rightharpoonup u$
in $E$, $u_{\mu}\to u$ in $L^q(\R^N)$, $u_{\mu}\to u$
a.e. in $\R^N$ and $|\nabla u_{\mu}|_p^p\to l\geq 0$ as
$\mu\to 0^+$. Then, by $P_{\mu}(u_{\mu})=0$ again and
the Sobolev inequality, we have
$$l\leq S^{-\frac{N}{N-p}}l^\frac{N}{N-p}.$$
Thus, it is to obtain either $l=0$ or $l\geq S^\frac{N}{p}$.
If $l=0$, then
$u_{\mu}\to 0$ in $D^{1,p}(\R^N)$ and hence
$J_{\mu}(u_{\mu})\to 0$ as $\mu\to 0^+$.
 On the other
hand, Lemma \ref{lem6.3} indicates that
$J_{\mu}(u_{\mu})=m(\mu)\geq m(\bar{\mu})>0$ for
$0<\mu<\bar{\mu}$, which comes to a contradiction. Therefore,
we have $l\geq S^\frac{N}{p}$. Combining
Lemmas \ref{lem6.2} and \ref{lem6.3} yields that
$$\frac{ l}{N}=\lim_{\mu\to 0^+}
\left(J_{\mu}(u_{\mu})-\frac{1}{p^\ast}P_{\mu}(u_{\mu})\right)
=\lim_{\mu\to 0^+}m(\mu)
\leq m(0)=\frac{1}{N}S^{N/p}.$$
Then there must be $l=S^\frac{N}{p}$
and $\lim_{\mu\to 0^+}m(\mu)=\frac{1}{N}S^{N/p}$.

Finally, we claim $u=0$, i.e., $u_{\mu}\rightharpoonup 0$
in $E$ as $\mu\to 0^+$. Indeed, let $\lambda_\mu<0$ be the Lagrange
multiplier associated with the ground state solution $u_{\mu}$.
By $P_{\mu}(u_{\mu})=0$, we have
$$\lambda_\mu c^p=\lambda_\mu|u_{\mu}|_p^p
=|\nabla u_{\mu}|_p^p
-|u_{\mu}|_{p^\ast}^{p^\ast}-\mu|u_{\mu}|_q^q
=\mu(\gamma_q-1)|u_{\mu}|_q^q$$
and then $\lambda_\mu\to 0$ as $\mu\to 0^+$.  Combining this
with $u_{\mu}\rightharpoonup u$ in $E$, a similar argument to the proof of Lemma \ref{lem3.3} shows that $u$
weakly solves
$$-\Delta_p u=u^{p^\ast-1},\ u\geq0,\ {\rm in}\  \R^N,\ u\in E,$$
which
however has only trivial solution in $E$. Therefore, we have
$u=0$.
\end{proof}

\textbf{Statements and Declarations:} There is no competing interests.

\textbf{Acknowledgement:}
The authors would like to express their sincere gratitude to anonymous referees for his/her
constructive comments for improving the quality of this paper.


\begin{thebibliography}{00}
\bibitem{mag} M. Agueh, Sharp Gagliardo-Nirenberg inequalities via $p$-Laplacian type equations, Nonlinear Differ. Equ.
Appl. 15 (2008) 457-472.
\bibitem{coa} C. O. Alves, Existence of positive solutions for a problem with
lack of compactness involving the $p$-Laplacian, Nonlinear Anal. 51 (2002) 1187-1206.
\bibitem{tbl} T. Bartscha, Z. Liu, On a superlinear elliptic $p$-Laplacian equation, J. Differential Equations 198
(2004) 149-175.
\bibitem{blw}T. Bartsch, Z. Liu, T. Weth, Nodal solutions of a $p$-Laplacian equation, Proc. Lond. Math. Soc. 91
(3) (2005) 129-152.
\bibitem{bs3}
T. Bartsch, N. Soave, A natural constraint approach to normalized
solutions of nonlinear Schr\"{o}dinger equations and systems, J.
Funct. Anal. 272 (2017) 4998-5037.
\bibitem{BS}
T. Bartsch, N. Soave, Multiple normalized solutions for a
competing system of Schr\"{o}dinger equations, Calc. Var. Partial
Differential Equations 58 (2019) 22.
\bibitem{bdv}
T. Bartsch, S. de Valeriola, Normalized solutions of nonlinear
Schr\"{o}dinger equations, Arch. Math. 100 (2013) 75-83.


\bibitem {mfb} M.F. Bidaut-V\'{e}ron, Local and global behavior of solutions of quasilinear equations of
Emden-Fowler type, Arch. Ration. Mech. Anal. 107 (4) (1989) 293-324.

\bibitem{cl1}
T. Cazenave, P.-L. Lions, Orbital stability of standing waves
for some nonlinear Schr\"{o}dinger equations, Comm. Math. Phys.
85 (1982) 549-561.

\bibitem{cmo}J. Ceccon, M. Montenegro, Optimal Riemannian $L^p$-Gagliardo-Nirenberg inequalities
revisited, J. Differential Equations 254 (2013) 2532-2555.
\bibitem {clr} M. Clapp, L. Lopez Rios, Entire nodal solutions to the pure critical exponent problem for the
$p$-Laplacian, J. Differential Equations 265 (2018) 891-905.

\bibitem{dpr}L. Damascelli, F. Pacella, M. Ramaswamy, Symmetry of ground states of $p$-Laplace equations via the
moving plane method, Arch. Rational Mech. Anal. 148 (1999) 291-308.
\bibitem{dh} P. Drabek, Y. Huang, Multiplicity of Positive Solutions for Some
Quasilinear Elliptic Equation in $\R^N$
with Critical Sobolev Exponent, J. Differential Equations 140 (1997) 106-132.

 \bibitem{gapa1}J. Garcia Azorero, I. Peral Alonso, Multiplicity of solutions for elliptic problems with
critical exponent or with a non-symmetric term, Trans. Amer. Math. Soc. 323 (1991)
877-895.
\bibitem{jpa1} J. P. Garcia Azorero, I. Peral Alonso, Some results about the existence of a
second positive solution in a quasilinear critical problem, Indiana Univ. Math. J. 43 (1994)
941-957.
\bibitem{ng}N. Ghoussoub, Duality and perturbation methods in critical point theory, volume 107 of Cambridge
Tracts in Mathematics, Cambridge University Press, Cambridge, 1993. With appendices by David
Robinson.

 \bibitem{gt1}
D. Gilbarg, N. Trudinger, Elliptic Partial Differential
Equations of Second Order, Reprint of the 1998 edition, in:
Classics in Mathematics, Springer-Verlag, Berlin, 2001.
\bibitem {gv} M. Guedda, L. Veron, Local and global properties of solutions of quasilinear elliptic equations,
J. Differential Equations 76 (1) (1988) 159-189.


\bibitem{gz2}
T. Gou, Z. Zhang, Normalized solutions to the
Chern-Simons-Schr\"{o}dinger system, J. Funct. Anal. 280 (2021)
108894.

 \bibitem{gzz}L. Gu, X. Zeng, H. Zhou, Eigenvalue problems for $p$-Laplacian equation with trapping potentials.
Nonlinear Anal. 148 (2017) 212-227.
\bibitem{zgw}Z.M. Guo, J.R.L. Webb, Large and small solutions of a class of quasilinear elliptic eigenvalue
problems, J. Differential Equations 180 (2002) 1-50.
\bibitem{ht1}
J. Hirata, K. Tanaka, Nonlinear scalar field equations with
$L^2$ constraint: mountain pass and symmetric mountain pass
approaches, Adv. Nonlinear Stud. 19 (2019) 263-290.

\bibitem{lj}
L. Jeanjean, Existence of solutions with prescribed norm for
semilinear elliptic equations, Nonlinear Anal. 28 (1997)
1633-1659.
\bibitem{jssl} L. Jeanjean, S.S. Lu, Nonradial normalized solutions for nonlinear scalar field
equations, Nonlinearity 32(12) (2019) 4942-4966.
\bibitem{mkk}
M. Kwong, Uniqueness of positive solutions of $\Delta u-u+u^p=0$
in $\R^n$, Arch. Rational Mech. Anal. 105 (1989) 243--266.






\bibitem{ly1} G. Li, S. Yan, Eigenvalue problems for quasilinear elliptic equations on $\R^N$, Comm. Partial Differ.
Equ. 14 (1989) 1291-1314.
\bibitem{jzh}G. Li, H. Zhou, Multiple solutions to $p$-Laplacian problems with asymptotic non-linearity as
 $u^{p-1}$ at infinity, J. Lond. Math. Soc. 65 (2002) 123-138.
\bibitem{pl11} P. Lindqvist, On nonlinear Rayleigh quotients, Potential Anal. 2 (1993) 199-218.


\bibitem {pl1} P.L. Lions, The concentration-compactness principle in the calculus of variations,
the limit case I. Rev. Mat. Iberoam. 1(1) (1985) 145-201.
\bibitem {pl2} P.L. Lions, The concentration-compactness principle in the calculus of variations,
the limit case II. Rev. Mat. Iberoam. 1(2) (1985) 45-121.
\bibitem{ll11}J. Liu, X. Liu, On the eigenvalue problem for the $p$-Laplacian operator in $\R^N$, J. Math. Anal.
 Appl. 379 (2011) 861-869.
\bibitem{lz2}
H. Luo, Z. Zhang, Normalized solutions to the fractional
Schr\"{o}dinger equations with combined nonlinearities, Calc.
Var. Partial Differential Equations 59 (2020) 143.
\bibitem{lz-12-24}
H. Luo, Z. Zhang, Partial symmetry of normalized solutions for a doubly coupled Schr\"{o}dinger system, Partial Differ. Equ. Appl. 1 (2020) 24.


\bibitem {ip} I. Peral, Multiplicity of Solutions for the $p$-Laplacian, in: Lecture notes for the Second School of
Nonlinear Functional Analysis and Applications to Differential Equations, Internationl
Centre of Theoretical Physics, Trieste (Italia), 1997.
\bibitem{phr} P.H. Rabinowitz, Minimax methods in critical point theory with applications to differential equations,
CBMS Regional Conference Series in Math., 65 American Mathematical Society, Providence, RI, 1986.

\bibitem{PW}
M. Protter, H. Weinberger, Maximum Principles in Differential
Equations, Prentice-Hall, Inc., Englewood Cliffs, N.J., 1967.

\bibitem{shz}J. Serrin, H. Zou, Symmetry of ground state of quasilinear elliptic equations, Arch. Rational Mech. Anal.
148 (1999) 265-290.
\bibitem{ms1}
M. Shibata, Stable standing waves of nonlinear Schr\"{o}dinger
equations with a general nonlinear term, Manuscripta Math. 143 (2014)
221-237.
\bibitem{ns2}
N. Soave, Normalized ground states for the NLS equation with
combined nonlinearities, J. Differential Equations 269 (2020)
6941-6987.
\bibitem{ns1}
N. Soave, Normalized ground states for the NLS equation with
combined nonlinearities: The Sobolev critical case, J. Funct.
Anal. 279 (2020) 108610.
\bibitem{sww}J. Su, Z.Q. Wang, M. Willem, Weighted Sobolev embedding with unbounded
and decaying radial potentials, J. Differential Equations 238 (2007) 201-219.
\bibitem{ms}
M. Struwe, Variational Methods. Applications to Nonlinear Partial
Differential Equations and Hamiltonian Systems, Fourth Edition,
Springer-Verlag, Berlin, 2008.

\bibitem{cas1}
C. Stuart, Bifurcation in $L^p(\R^N)$ for a semilinear elliptic
equation, Proc. London Math. Soc. 57 (1988) 511-541.
\bibitem{cas2}
C. Stuart, Bifurcation from the essential spectrum for some
noncompact nonlinearities, Math. Methods Appl. Sci. 11 (1989)
525-542.
\bibitem{sy1} C.A. Swanson, L.S. Yu, Critical $p$-Laplacian Problems in $\mathbb{R}^N$, Ann. Mat. Pura Appl.
169 (1995) 233-250.

\bibitem{gt} G. Talenti, Best constant in Sobolev inequality, Ann. Mat. Pura Appl. 110 (1976) 353-372.

 \bibitem{wlzl}W. Wang, Q. Li, J. Zhou, Y. Li, Normalized solutions for $p$-Laplacian equations with a
$L^2$-supercritical growth,  Ann. Funct. Anal. 12 (2021) 19.

\bibitem {w} M. Willem, Minimax Theorems. Progress in Nonlinear Differential Equations and Their
Applications, vol. 24. Boston: Birkh\"{a}user, 1996.























\end{thebibliography}
\end{document}